\theoremstyle{plain}
\newtheorem{theorem}{Theorem}[section]
\newtheorem{lemma}[theorem]{Lemma}
\newtheorem{corollary}[theorem]{Corollary}
\newtheorem{proposition}[theorem]{Proposition}
\newtheorem{remark}[theorem]{Remark}
\theoremstyle{definition}
\newtheorem*{assumptions*}{Assumptions}
\newtheorem*{key*}{Key}
\DeclarePairedDelimiter\floor{\lfloor}{\rfloor}
\newsavebox\myboxA
\newsavebox\myboxB
\newlength\mylenA
\renewcommand*\env@matrix[1][*\c@MaxMatrixCols c]{%
  \hskip -\arraycolsep
  \let\@ifnextchar\new@ifnextchar
  \array{#1}}
\newcommand*\xoverline[2][0.75]{%
    \sbox{\myboxA}{$\m@th#2$}%
    \setbox\myboxB\null
    \ht\myboxB=\ht\myboxA%
    \dp\myboxB=\dp\myboxA%
    \wd\myboxB=#1\wd\myboxA
    \sbox\myboxB{$\m@th\overline{\copy\myboxB}$}
    \setlength\mylenA{\the\wd\myboxA}
    \addtolength\mylenA{-\the\wd\myboxB}%
    \ifdim\wd\myboxB<\wd\myboxA%
       \rlap{\hskip 0.5\mylenA\usebox\myboxB}{\usebox\myboxA}%
    \else
        \hskip -0.5\mylenA\rlap{\usebox\myboxA}{\hskip 0.5\mylenA\usebox\myboxB}%
    \fi}
\newcommand*{\ov}[1]{%
  $\m@th\overline{\mbox{#1}}$%
}
\newcommand*{\ovA}[1]{%
  $\m@th\overline{\mbox{#1}\raisebox{3mm}{}}$%
}
\newcommand*{\ovB}[1]{%
  $\m@th\overline{\mbox{#1\rule{0pt}{3mm}}}$%
}
\newcommand*{\ovC}[1]{%
  $\m@th\overline{\mbox{#1\strut}}$%
}
\newcommand*{\ovD}[1]{%
  $\m@th\overline{\mbox{#1\vphantom{\"A}}}$%
}
\newcommand*{\ovE}[1]{%
  $\m@th\overline{\raisebox{0pt}[1.2\height]{#1}}$%
}
\newcommand*{\ovF}[1]{%
  $\m@th\overline{\raisebox{0pt}[\dimexpr\height+1mm\relax]{#1}}$%
}
\newcommand*{\ovG}[1]{%
  $\m@th\overline{\raisebox{0pt}[\dimexpr\height+1mm\relax]{#1\vphantom{A}}}$%
}
\numberwithin{equation}{section}
\newcommand{\un}{\underline}
\newcommand{\ti}{\textit}
\newcommand{\x}{\underline{x}}
\newcommand{\y}{\underline{y}}
\newcommand{\z}{\underline{z}}
\newcommand{\h}{\underline{h}}
\newcommand{\m}{\underline{m}}
\newcommand{\p}{\underline{p}}
\newcommand{\ut}{\underline{t}}
\newcommand{\uu}{\underline{u}}
\newcommand{\uv}{\underline{v}}
\newcommand{\ua}{{\underline{a}}}
\newcommand{\mfb}{{\underline{\mathfrak{b}}}}
\newcommand{\uc}{\underline{c}}
\newcommand{\uR}{\underline{R}}
\newcommand{\F}{F_{\underline{h}}}
\newcommand{\mb}{\mathbb}
\newcommand{\ep}{\epsilon}
\newcommand{\Z}{\mathbb{Z}}
\newcommand{\Q}{\mathbb{Q}}
\newcommand{\N}{\mathbb{N}}
\newcommand{\R}{\mathbb{R}}
\newcommand{\C}{\mathbb{C}}
\newcommand{\Co}{\Big{\{}}
\newcommand{\Cc}{\Big{\}}}
\newcommand{\Bo}{\Big{(}}
\newcommand{\Bc}{\Big{)}}
\newcommand{\Lb}{\begin{lemma}}
\newcommand{\Le}{\end{lemma}}
\newcommand{\Pb}{\begin{proposition}}
\newcommand{\Pe}{\end{proposition}}
\newcommand{\Tb}{\begin{theorem}}
\newcommand{\Te}{\end{theorem}}
\newcommand{\Cb}{\begin{corollary}}
\newcommand{\Ce}{\end{corollary}}
\newcommand{\Dy}{D_P(R,t,\uR)}
\newcommand{\Smith}{\mathrm{Smith}}
\newcommand{\Diag}{\mathrm{Diag}}
\newcommand{\Vol}{\mathrm{Vol}}
\newcommand{\Char}{\mathrm{Char}}
\newcommand{\meas}{\mathrm{meas}}
\newcommand{\PP}{\mathbb{P}}
\newcommand{\QQ}{\mathbb{Q}}
\newcommand{\ZZ}{\mathbb{Z}}
\newcommand{\Sing}{\mathrm{Sing}}
\renewcommand{\AA}{\mathbb{A}}
\newcommand{\scrm}{\mathfrak{m}}
\newcommand{\ualf}{\underline{\alpha}}
\newcommand{\uz}{\underline{z}}
\newcommand{\starsum}{\sideset{}{^*}\sum}
\newcommand{\Null}{\mathrm{Null}}
\newcommand{\FF}{\mathbb{F}}
\newcommand{\vecnull}{\underline{\mathrm{0}}}
\newcommand{\ue}{\underline{e}}
\newcommand{\supp}{\mathrm{Supp}}
\newcommand{\uT}{\underline{T}}
\renewcommand{\y}{\underline{y}}
\newcommand{\e}{\underline{e}}
\renewcommand{\L}{\mathrm{L}}
\renewcommand{\h}{{\underline{h}}}
\newcommand{\cL}{\mathcal{L}}
\newcommand{\ve}{\varepsilon}
\newcommand{\utau}{\underline{\tau}}
\newcommand{\scrH}{\mathcal{H}}
\newcommand{\tH}{\tilde{\mathcal{H}}}
\newcommand{\scrL}{\mathcal{L}}
\newcommand{\Fnull}{F^{(0)}}
\newcommand{\Gnull}{G^{(0)}}
\renewcommand{\u}{\underline{u}}
\renewcommand{\v}{\underline{v}}
\newcommand{\cont}{\mathrm{Cont}}
\renewcommand{\b}{\underline{b}}
\newcommand{\lqi}{\lambda_{q,i}}
\renewcommand{\uc}{\underline{c}}
\newcommand{\rank}{\mathrm{Rank}}
\begin{document}

\title{On the Hasse Principle for Complete Intersections}
\author{Matthew J. Northey}
\email{m.j.northey@durham.ac.uk}
\author{Pankaj Vishe}
\email{pankaj.vishe@durham.ac.uk}
\begin{abstract}
We prove the Hasse principle for a smooth projective variety $X\subset \PP^{n-1}_\Q$ defined by a system of two cubic forms $F,G$ as long as $n\geq 39$. The main tool here is the development of a version of Kloosterman refinement for a smooth system of equations defined over $\Q$.
\end{abstract}
\maketitle


\pagenumbering{arabic}

\section{Introduction}
\label{C:Statement of Results}
Let $X\subseteq \PP^{n-1}_\Q$ denote a projective complete intersection variety. In particular, let $X$ correspond to the zero locus of a system of $R$ homogeneous polynomials of degree $d$ defined over $\Q$. Let
$$\sigma=\dim\Sing(X),$$
where 
\begin{align}
    \label{eq: Statement of results: sigma definition}
    \Sing(X):=\{\x\in\mb{P}^{n-1}_{\C} \: : \: F_1(\x)=\cdots=F_R(\x)=0,\; \rank(\nabla F_1(\x)\cdots \nabla F_R(\x))<R\}
\end{align}
denotes the singular locus of the variety $X$.
Furthermore, we define $\x$ to be a non-singular point of $X$ if 
\begin{align}
    \label{eq: Adelic def}
    F_1(\x)=\cdots=F_R(\x)=0,\quad \rank(\nabla F_1(\x)\cdots \nabla F_R(\x))=R.
\end{align}
A long-standing result of Birch \cite{Birch61} establishes the Hasse principle as long as 
$$n-\sigma\geq (d-1)2^{d-1}R(R+1)+R.$$
While the case of lower degree hypersurfaces $(R=1)$ has seen several breakthroughs in recent times, the case of general complete intersections has seen relatively lower success. In the case of pair of quadrics over $\Q$, Munshi introduced a version of the delta method which allows one to use Kloosterman refinement \cite{Munshi15}. He combined this with Poisson summation to verify the Hasse principle when $n\geq 11$, provided that their intersection is non-singular. Unfortunately, the techniques used fail to generalise effectively outside of the case of two quadrics.

There have been two recent notable breakthroughs. Myerson \cite{Myer},\cite{Myer1} improved the square dependence on $R$ in Birch's result to a linear one. When $d=2$ and $3$, these results improve the lower bound to $n-\sigma\geq 8R$ and $25R$ respectively. This is a significant improvement when $R$ is large. However when $R$ is small (say $2$), it fails to improve upon Birch's bounds. Typically one expects better understanding of the distribution of rational points when $d$ and $R$ are \textit{relatively small}. When $R=1$, this is facilitated by an analytic technique called \textit{Kloosterman refinement}, which allows one to use the Poisson summation formula in an effective way. A recent breakthrough was obtained in the second author's work \cite{Vishe19}, where a version of Farey dissection was developed for a system of two forms in the function field setting. Unfortunately, so far the method there doesn't extend to the $\Q$ setting.

The main purpose of this work is to provide a route to Kloosterman refinement for a system of forms over $\Q$ in the settings where Poisson summation does not work directly. In particular, the method here should improve upon the current results as long as the defining forms $F$ and $G$ of $X$ are not two quadrics or a cubic and a quadric. 

We now define the setting in this paper. Let $F(\x),G(\x)\in\Z[x_1,...,x_n]$ be two homogeneous cubic forms in $n$ variables and with integer coefficients, and let $X$ denote the smooth projective variety defined by their simultaneous zero locus. The long standing result by Birch $n\geq 49$ is yet to be improved in the current setting (a pair of cubics). In the case of a system of diagonal cubic forms, one can obtain significantly stronger results. In particular, Br\"udern and Wooley \cite{Bruedern-Wooley07}, \cite{Bruedern-Wooley16} proved that the Hasse principle is true for a smooth system of $R$ diagonal cubic forms in $n$ variables provided that $n\geq 6R+1$.

In this paper, we will use a combination of Kloosterman refinement and a two dimensional version of averaged van der Corput differencing to improve upon Birch's result. In particular, we aim to prove the following result:
\begin{theorem}
\label{thm:main thm}
Let $X:=X_{F,G}\subset \PP^{n-1}_\QQ$ be a smooth complete intersection variety defined by a system of two cubic forms $F$ and $G$.
Then $X$ satisfies the Hasse principle provided that $n\geq 39$.
\end{theorem}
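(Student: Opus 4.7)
The plan is to establish the quantitative counting statement $N_w(P)\gg P^{n-6}$ for the weighted rational point count
\begin{equation*}
N_w(P) = \sum_{\mathbf{x}\in\ZZ^n} w(\mathbf{x}/P)\,\mathbf{1}_{F(\mathbf{x})=0}\,\mathbf{1}_{G(\mathbf{x})=0},
\end{equation*}
where $w$ is a smooth weight concentrated near a non-singular real point of $X$. The presence of a non-singular adelic point is guaranteed by the Hasse principle hypothesis, so once such a lower bound is obtained for a suitable $w$, infinitely many rational points on $X$ follow. The analytic engine is a circle method using a smooth delta-symbol decomposition applied simultaneously to the two conditions $F(\mathbf{x})=0$ and $G(\mathbf{x})=0$, rewriting $N_w(P)$ as a sum over moduli $q$ and residues $\mathbf{a}=(a_1,a_2)\bmod q$ of a two-dimensional exponential sum paired with a smooth oscillatory integral.

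To break Birch's bound $n\geq 49$, I would insert an averaged two-dimensional van der Corput differencing step at the level of the exponential sum. Squaring the sum and averaging over shifts $\mathbf{h}_1,\mathbf{h}_2$ reduces the cubic phase $\alpha_1 F(\mathbf{x})+\alpha_2 G(\mathbf{x})$ to the quadratic phases $\alpha_1 F_{\mathbf{h}_1}(\mathbf{x})+\alpha_2 G_{\mathbf{h}_2}(\mathbf{x})$, where $F_{\h}(\mathbf{x})=F(\mathbf{x}+\h)-F(\mathbf{x})$ is quadratic in $\mathbf{x}$. Crucially, averaging simultaneously over both $\mathbf{h}_1$ and $\mathbf{h}_2$, rather than differencing each cubic separately as in Weyl's method, should extract extra cancellation between the two cubic pieces, and this joint two-dimensional averaging is what the intro advertises as the central new ingredient.

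With the phase now quadratic, Kloosterman refinement becomes operational. Rather than fixing $q$ and bounding the $\mathbf{a}$-sum term by term as Birch does, one performs Poisson summation in the $\mathbf{x}$-variable to convert the exponential sum into a dual lattice sum whose support is controlled by the rank of the pencil $\alpha_1 F_{\mathbf{h}_1}+\alpha_2 G_{\mathbf{h}_2}$, and then exploits additional cancellation across the moduli $q$ through estimates on the resulting Kloosterman-type character sums. On the major arcs this should recover the expected main term $cP^{n-6}$ with $c=\mathfrak{S}\mathfrak{J}>0$ factoring as a product of local densities, positive by the Hasse hypothesis and the smoothness of $X$. On the minor arcs, the combined savings from van der Corput differencing and Kloosterman refinement must strictly beat $P^{n-6}$.

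The principal obstacle is the contribution of ``bad'' shifts $(\mathbf{h}_1,\mathbf{h}_2)$ for which the pencil $\alpha_1 F_{\mathbf{h}_1}+\alpha_2 G_{\mathbf{h}_2}$ is of low rank, since the Poisson dual is then supported on a large lattice and the cancellation evaporates. Smoothness of $X_{F,G}$, equivalent to $\dim\Sing(X)=-1$, should force the locus of such degenerate shifts to have small codimension, and a careful dimension count on the variety where $\nabla^2 F(\mathbf{h}_1)$ and $\nabla^2 G(\mathbf{h}_2)$ simultaneously drop rank is required. The threshold $n\geq 39$ should then emerge as the break-even point between the generic-shift savings from Kloosterman refinement and the loss on this degenerate locus, and achieving exactly this numerical balance — rather than some weaker bound — is the most delicate part of the argument.
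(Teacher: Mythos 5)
Your outline contains a concrete structural error at its heart: van der Corput differencing cannot produce the phase $\alpha_1 F_{\h_1}(\x)+\alpha_2 G_{\h_2}(\x)$ with two independent shifts. The differencing step replaces the summation variable $\x$ by $\x+\h$ and then applies Cauchy--Schwarz, so the \emph{same} shift $\h=\h_1-\h_2$ necessarily appears in both differenced polynomials: one is led to $F_{\h}(\y)=F(\y+\h)-F(\y)$ and $G_{\h}(\y)=G(\y+\h)-G(\y)$ with a single $\h$. There is no mechanism to shift $F$ and $G$ by different vectors while still summing the original exponential sum, and no ``extra cancellation between the two cubic pieces'' of the kind you describe. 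The actual two-dimensional content of the averaged van der Corput step is different: (i) the averaging is over the two-dimensional $\z$-integral, with the shift set $\mathcal{H}$ chosen as a box of size $\asymp P^2H^{n-2}$ elongated in the two directions $\e_1'\propto\nabla F(\x_0)$ and $\e_2'$ (the Gram--Schmidt complement of $\nabla G(\x_0)$), so that for $\h$ outside a much smaller set $\tilde{\mathcal H}$ the integral of $T_{\h}(q,\z)$ against the Gaussian weight is negligible; and (ii) the Kloosterman refinement comes from keeping the sum over the numerators $\ua\bmod q$ \emph{inside} the Cauchy--Schwarz, so that after Poisson summation one averages $\#\Null_q(a_1M_1+a_2M_2)$ over $\ua$ and saves a factor $O(q)$ on $\sum_{\ua}|S(\ua/q+\z)|^2$ via rank estimates for the pencil $a_1M_1+a_2M_2$ — not from ``cancellation across the moduli $q$'' in Kloosterman-type sums.

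Two further points. First, the paper deliberately does \emph{not} use a delta-symbol decomposition: it uses a two-dimensional Dirichlet approximation with total modulus $Q=P^{3/2}$, and explicitly argues that a Munshi-style delta method would enlarge the total modulus and only yield $n\geq 42$. Second, your identification of the ``bad shifts'' is in the right spirit but must be recast for a single $\h$: what is needed is a bound on the dimension of $\{\h: \dim\Sing(F_{\h}^{(0)},G_{\h}^{(0)})\geq i-1\}$ (controlled by the smoothness of $X$ via an incidence-variety dimension count), combined with hyperplane-section reductions and separate treatments of the square-free, square and cube-full parts of $q$, plus complementary Weyl-differencing bounds in the ranges where Poisson summation is weak. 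The threshold $n\geq 39$ emerges only from optimising the minimum of all five resulting bounds over the full range of $(q,|\z|)$; none of this machinery is visible in your sketch, so as written the proposal would not close.
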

To the best of the authors' knowledge, this is the first known improvement of the Birch's result in this case. As is the typical feature of the methods used here, with some more work the result can be easily extended to cover the cases of singular varieties, as long as $n-\sigma\geq 40$. However, here we will stick to the non-singular setting. The limitation of the method here is $n\geq 38$. Akin to the work \cite{Marmon_Vishe} of Marmon and the second author, saving an extra variable variable will require substantially new technical input which we will not attempt to obtain here.

For those familiar with circle method techniques, there are two key innovations here that facilitate Theorem \ref{thm:main thm}. The first improvement comes from developing a two dimensional version of averaged van der Corput differencing. This followed by Weyl differencing itself could hand us Theorem \ref{thm:main thm} when $n\geq 43$. Interestingly, averaged van der Corput followed by pointwise Poisson summation fails to improve upon this. Our second innovation comes from combining averaged van der Corput process with a version of Kloosterman refinement followed by Poisson summation. This combination saves us $4$ extra variables. If one were to combine the averaged van der Corput process here with the method of Munshi \cite{Munshi15}, our rough calculations show that one would require $n\geq 42$ variables. This is to be expected as the method in \cite{Munshi15} would result in the use of a larger \textit{total modulus} (the parameter $Q$ appearing in this paper). This is wasteful if one is dealing with forms in many variables, rendering the method less effective when dealing with complete intersections which are not defined by two quadrics. 

We now give a more detailed outline of the key ideas. From now on, we will assume that $X$ is a complete intersection of two cubics as before further containing a non-singular adelic point, i.e. that
\begin{equation}\label{eq:localobs}
X_{\textrm{ns}}(\AA_\QQ) \neq \emptyset,
\end{equation}
where given any variety $X$, let
\[X(\AA_\QQ):= X(\R) \times \prod_p X(\Q_p).\]
Given a smooth weight function $\omega\in \mathrm{C}^\infty_c(\R^n)$, and a large parameter $1\leq P$, we define the following smooth counting function:
\[N(P):=N_\omega(P):=\sum_{\substack{\underline{x}\in\mathbb{Z}^n,\\ F(\underline{x})= G(\underline{x}) = 0}} \omega(\underline{x}/P).\]

Our main tool in proving Theorem \ref{thm:main thm} is the asymptotic formula for $N(P)$ obtained in Theorem \ref{thm:main thm 1}. Before stating it, let us define the weight function $\omega$ in the following way. We will choose $\omega$ to be a smooth weight function, centred at a non-singular point $\underline{x}_0\in X(\mathbb{R})$ with the additional property that its support is a ``small" region around $\underline{x}_0$. Upon recalling \eqref{eq: Adelic def}, it is easy to see that the existence of such a point is guaranteed by our earlier assumption that $X$ has a non-singular adelic point. In particular, the point $\x_0\in X(\R)$ must have $$\rank(\nabla F(\x_0), \nabla G(\x_0))=2.$$ 

Using homogeneity of $F$ and $G$, we may further assume that $|\x_0|<1$. This condition is superficial, and only assumed to make the implied constants appearing in our argument simpler. Let
\[\gamma(\underline{x}):=\begin{cases} \prod_j e^{-1/(1-x_j)^2}\;\, \mbox{ if } |\underline{x}|<1,\\ 0 \quad\quad\quad\quad\quad\quad\; \mbox{ else,} \end{cases}\]
denote a non-negative smooth function supported in the hypercube $[-1,1]^n$. Given a parameter $0<\rho<1$ to be suitably decided later, we define
\begin{equation}\label{eq:omegadef}
\omega(\underline{x}):=\gamma(\rho^{-1}(\underline{x}-\underline{x}_0)).
\end{equation}

We are now set to state our main counting result, which directly implies Theorem \ref{thm:main thm}.
\begin{theorem}
\label{thm:main thm 1}
Let $X\subset \PP^{n-1}_\Q$ be a smooth complete intersection variety defined by a system of two cubic forms $F,G$. Then provided that $n\geq 39$ and $X_{\mathrm{ns}}(\AA_\QQ) \neq \emptyset$, there exist $C_X>0$ and some $\rho_0\in (0,1]$, such that for each $0<\rho\leq \rho_0$, there exists $\delta_0:=\delta_0(\rho)>0$ such that
\[N(P)=C_XP^{n-6}+O_{n,F,G, \rho}(P^{n-6-\delta_0}).\]
\end{theorem}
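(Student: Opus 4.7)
The plan is to apply the circle method, writing
$$N(P) = \int_{[0,1]^2} S(\alpha,\beta)\,d\alpha\,d\beta, \qquad S(\alpha,\beta) := \sum_{\x \in \Z^n} \omega(\x/P)\, e\bigl(\alpha F(\x) + \beta G(\x)\bigr),$$
and partitioning the unit square into major and minor arcs centred at rationals $(a/q, b/q)$ with $q$ bounded by a parameter $Q$ chosen as a suitable power of $P$. One then aims to show that the major-arc contribution produces the main term $C_X P^{n-6}$ with $C_X = \mathfrak{S}\cdot\mathfrak{I}$ (the product of the singular series and singular integral), while the minor-arc contribution is $O(P^{n-6-\delta_0})$ for some $\delta_0 > 0$.

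For the minor arcs I would execute the three-step hybrid strategy advertised in the introduction. First, apply a two-dimensional averaged van der Corput step: after a Cauchy--Schwarz, the square of the relevant exponential sum is bounded by a weighted average, over a shift $\h$ ranging over a box of size $H$ in a distinguished pair of coordinate directions, of sums with quadratic phase $\alpha(F(\x+\h) - F(\x)) + \beta(G(\x+\h) - G(\x))$. Second, apply Weyl differencing to this family of quadratic sums with a further shift $\h'$, linearising the phase in $\x$ and producing bilinear exponential sums whose $\x$-variables can be separated. Third, rather than passing directly to pointwise Poisson summation, implement a version of Kloosterman refinement adapted to the pair $(F,G)$: redistribute the Farey-type averaging over denominators $q \leq Q$ so as to convert the pointwise modulus into shorter, highly oscillatory Kloosterman sums, and only then apply Poisson summation in the $\x$ variables. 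The extra oscillation extracted from the Kloosterman sums is what saves four variables relative to the plain ``averaged van der Corput $+$ Weyl $+$ Poisson'' approach, bringing the threshold from $n \geq 43$ down to $n \geq 39$.

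For the major arcs the strategy is classical: Taylor expand $e(\alpha F + \beta G)$ around the rational $(a/q, b/q)$, separate the resulting arithmetic factor (a complete exponential sum modulo $q$) from the analytic factor (an oscillatory integral involving $\omega$), and sum over $a,b,q$ to produce the singular series $\mathfrak{S} = \prod_p \sigma_p$ and the singular integral $\mathfrak{I}$. Absolute convergence of $\mathfrak{S}$ will come from standard bounds on the number of points on $\{F \equiv G \equiv 0\} \pmod{p^k}$ combined with the smoothness of $X$. Positivity of each local density $\sigma_p$ follows from the hypothesis $X_{\mathrm{ns}}(\AA_\QQ) \neq \emptyset$ via Hensel lifting, and positivity of $\mathfrak{I}$ is forced by the choice of $\omega$ supported near the real non-singular point $\x_0$ with $\rank(\nabla F(\x_0), \nabla G(\x_0)) = 2$, which guarantees that the real fibre through $\x_0$ is a genuine codimension-$2$ submanifold on which the restriction of $\omega$ is positive.

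The main obstacle will lie in the minor-arc analysis, specifically in controlling the auxiliary counting problem that emerges after the double differencing and Kloosterman refinement. Concretely, one has to count lattice points $\x$ in a box satisfying a pair of linear congruences whose coefficients depend quadratically on the pair of shifts $(\h, \h')$, and then average this count against the Kloosterman-sum contribution in a way that preserves the gain. The oscillation from Kloosterman refinement is strong only when this auxiliary linear system has full rank, so bounding the measure of the ``degenerate'' shifts on which the rank drops is a delicate geometric problem and must draw repeatedly on the smoothness and the rank-$2$ assumption on $X$. This is precisely the balancing act that forces the threshold $n \geq 39$: the mention in the introduction that the method stalls at $n \geq 38$ reflects the point at which the Kloosterman saving just fails to absorb the loss coming from this auxiliary degeneration, and any further improvement would demand a genuinely new technical input.
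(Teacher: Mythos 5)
Your overall circle-method skeleton and major-arc plan match the paper, but your minor-arc strategy misdescribes both the nature of the Kloosterman refinement and its position in the chain of steps, and this is not a cosmetic discrepancy.

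First, the paper's Kloosterman refinement is \emph{not} a Farey-dissection-type redistribution over denominators $q\leq Q$. The authors explicitly remark that no workable analogue of Farey dissection for a pair of forms over $\Q$ is currently known, and that their route to Kloosterman refinement deliberately sidesteps Farey dissection. What they actually do is much more localised: in the Cauchy--Schwarz step of van der Corput differencing they sum over the \emph{numerators} $\ua$ modulo $q$ first and then square, so the complete average $\sum_{\ua\bmod q}^{*}$ is already inside the square when differencing produces the quadratic family $F_\h,G_\h$. After Poisson summation, the resulting exponential sum $S(q;\m)=\sum_{\ua}^{*}\sum_{\u\bmod q}e_q(a_1F_\h(\u)+a_2G_\h(\u)+\m\cdot\u)$ retains this $\ua$-average, and the gain of $q^{1/2}$ on the square-free part of $q$ (Proposition~\ref{P:sq free:n>1}, via the dual-variety factor $(\Phi(\m),b_1)^{1/2}$ together with Proposition~\ref{E22} and Lemma~\ref{T20}) is precisely what the paper means by Kloosterman refinement. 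Your ``redistribute the Farey-type averaging over denominators'' is a different mechanism, and one the paper says they cannot carry out.

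Second, you insert a Weyl differencing step between averaged van der Corput and Poisson summation. That is not what happens in the paper's main bound: after van der Corput one arrives at the quadratic sums $T_\h(q,\z)$ of \eqref{eq:Thmaindef}, and Poisson summation is applied \emph{directly} to these, with no intermediate differencing. Weyl differencing does appear, but only as a separate complementary bound (Propositions~\ref{T815}, \ref{T814}) used in different $(q,t)$-ranges and combined with the Poisson-route bounds through a numerical min--max in Section~\ref{sec:minor}. Your plan of van der Corput then Weyl then Poisson is wasteful and does not reflect the actual architecture; it also means the ``auxiliary counting problem'' you describe, with two shifts $(\h,\h')$ and a linear system in $\x$ depending quadratically on both shifts, is not what occurs. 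What actually has to be controlled is the singular locus $s_p(F_\h^{(0)},G_\h^{(0)})$ as $\h$ varies (one shift only), and this is handled via the hyperplane-intersection lemma (Lemma~\ref{lem:hyperplane}), the dimension bound Lemma~\ref{T71}, and a careful splitting of $q$ into square-free, square, and cube-full parts. These are the pieces your proposal is missing and would need to supply before it could be turned into a proof.
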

Our main tool here will be provided by the Circle method. It begins with by writing the counting function  $N(P)$ as an integral of a suitable exponential sum:
\begin{align*}
N(P):=N_\omega(P)&:=\sum_{\substack{\underline{x}\in\mathbb{Z}^n,\\ F(\underline{x})= G(\underline{x}) = 0}} \omega(\underline{x}/P)=\int_0^1 \int_0^1 S(\alpha_1,\alpha_2) d\alpha_1d\alpha_2,
\end{align*}
where 
\begin{equation}
\label{eq:Salfdef}
S(\ualf):=S(\alpha_1,\alpha_2):= \sum_{\underline{x}\in\mathbb{Z}^n} \omega(\underline{x}/P) e(\alpha_1 F(\underline{x})+\alpha_2 G(\underline{x})),
\end{equation}
denotes the corresponding exponential sum.  

In the traditional circle method, the unit square $I:=[0,1]^2$ is split into major arcs $\mathfrak{M}$ which consist of the points in $I$ which are ``close" to a rational point $\ua/q$, where $\ua=(a_1,a_2)\in\ZZ^2$ of ``small" denominator $q$, and minor arcs $\mathfrak{m}=I\backslash\mathfrak{M}$. The limitation of the process usually occurs while bounding the integral $$\int_{\scrm}S(\ualf)d\ualf .$$
When $R=1$, Kloosterman's revolutionary idea \cite{Kloosterman} was to apply Farey dissection to partition $[0,1]$ and use it to bound the minor arc contribution. This allows us to treat the minor arcs in a similar way to the major arcs. This idea essentially allows us -- upon setting $\alpha:=a/q+z$ and fixing the value of $z$ -- to consider averages of the corresponding exponential sum of the form
$$\sum\limits_{\substack{a\bmod{q}\\ (a,q)=1}}S(a/q+z). $$
The extra average over $a$ allows us to save an extra factor of size $O(q^{1/2})$, when $q$ is sufficiently large and $z$ relatively small.

When $R=2$, finding an analogue of Farey dissection which can be used to attain Kloosterman refinement over $\Q$ has proved to be major problem. In \cite{Vishe19}, the second author has managed find such an analogue in the function field setting, but so far it is not known how to use these ideas when working over $\Q$. The path to Kloosterman refinement in this paper will not focus on innovations to Farey dissection, and will instead focus on improving van der Corput differencing.

In the setting of that we will discuss (pair of two cubics), the Poisson summation formula cannot be applied directly. To be more precise, it is possible to apply Poisson summation, but the bound that it gives is trivial due to the corresponding exponential integral bound behaving badly when the degrees of our forms become too large. 

We therefore must use a differencing argument (such as van der Corput) to bound $|S(\alpha)|$ by a sum with polynomials of lower degree. To do this, one essentially starts by using Cauchy's inequality to bound 
\begin{equation}\label{eq:first estimation}
\left|\int_{\scrm} S(\ualf)d\ualf\right|\ll \left(\int_{\scrm}|S(\ualf)|^2d\ualf\right)^{1/2}.
\end{equation}
This leads us for a fixed integer $q$ and a fixed small $\uz\in I$ to consider the averages of the form 
\begin{equation}\label{eq:Sqdef}
\int_{|\uz|<q^{-1}Q^{-1/2}}\sum_{\substack{\ua\bmod{q}\\ 
(\ua,q)=1} }|S(\ua/q+\uz)|^2d\uz,
\end{equation}
where $Q$ is a suitable parameter to be fixed later. This parameter $Q$ arises from using a two dimensional version of Dirichlet approximation theorem. We further develop a two dimensional version of averaged van der Corput differencing used by in \cite{Hanselmann} , \cite{Heath-Brown07}, and \cite{Marmon_Vishe} to estimate the averages of $|S(\ua/q+\uz)|^2$ over $\uz$. This leads us to considering quadratic exponential sums for a system of differenced quadratic forms 
\begin{equation}\label{eq:fhghdef}
F_{\h}(\x):=\h\cdot \nabla F(\x),\,\,G_{\h}(\x):=\h\cdot \nabla G(\x).
\end{equation}
The extra averaging over $\ua$ in \eqref{eq:Sqdef} leads us to a saving of the size $O(q)$ in the estimation of $\sum_{\ua}|S(\ua/q+\uz)|^2$, and in the light of squaring technique used in \eqref{eq:first estimation}, it overall saves us a factor of size $O(q^{1/2})$ when $q$ is square-free.

The methods developed here are versatile and can be readily adapted to deal with general complete intersections. While dealing with averages of squares of corresponding exponential sums next to rationals of type $(a_1,...,a_R)/q$, where $q$ is square-free, we would be able to save a factor of size $O(q^{R/4})$ over the bounds coming from averaged van der Corput along with pointwise Poisson summation. To the best of the authors' knowledge, this is the first known version of Kloosterman refinement which generalises this way over $\Q$. This method could be further combined with any further versions of Kloosterman refinement in the contexts where a degree-lowering squaring technique is essential. For instance, in the function field setting, this method could potentially combined with the method in the aforementioned work by the second author \cite{Vishe19} to be able to save a factor of size $O(q^{(R-1)/4+1/2})$ instead.

\subsection{Acknowledgements} We would like to thank Tim Browning and Oscar Marmon for their help.

\section{Background on a pair of quadrics}
\label{C:Background on a pair of quadrics}
Exponential sums for a pair of quadrics will feature prominently in this work. Let $Q_1(\underline{x}), Q_2(\underline{x})$ be a pair of quadratic forms in $n$ variables with integer coefficients and consider the variety defined by
 \[V: Q_1(\underline{x})=Q_2(\underline{x})=0,\]
$\underline{x}\in\overline{\mathbb{Q}}^n$. Let $\Sing_K(V)$ to be the (projective) singular locus of $V$. When $Q_1$ and $Q_2$ intersect properly, namely, if $V$ is of projective dimension $n-3$ then we can express the singular locus of $V$ as follows:
\begin{equation}
\label{eq:singlocus}
\Sing_K(V):=\Big{\{}\underline{x}\in \mb{P}^{n-1}_{\overline{K}} \;\Big{|}\; \underline{x}\in V, \; \rank
\begin{pmatrix}
\nabla Q_1(\underline{x})\\ \nabla Q_2(\underline{x})
\end{pmatrix} <2 \Big{\}}.
\end{equation}

We say that the intersection variety of $Q_1(\underline{x})$ and $Q_2(\underline{x})$, $V$,  is non-singular if \\ $\dim\Sing_K (V)=-1$, and singular otherwise. It should be noted that \eqref{eq:singlocus} only truly encapsulates the set of singular points when $Q_1,Q_2$ have a \ti{proper} intersection over $K$ (that is, the forms $Q_1(\x)$, $Q_2(\x)$ share no common factor over K). However, $\Sing_K(V)$ is still a well defined set with a well defined dimension, even when $Q_1$ and $Q_2$ intersect improperly, and so we will also use this definition in this case.
We will now begin by noting a slight generalisation of \cite[Lemma 4.1]{Marmon_Vishe} in the context of two quadrics, which will be vital in various stages of this paper:
\begin{lemma}
\label{lem:hyperplane}
Let $Q_1,Q_2$ be a pair of quadratic forms defining a complete intersection $X=V(Q_1,Q_2)$. Let $\Pi$ be a collection of primes such that $\#\Pi=r\geq 0$ and define $\Pi_a:=\{p\in\Pi\,|\, p>a\}$ for every $a\in\mathbb{N}$. Then there exists a constant $c'=c'(n)$ and a set of primitive linearly independent vectors
\[\underline{e}_1,\cdots,\underline{e}_{n}\in\mathbb{Z}^n\]
satisfying the following property for any integer $0\leq \eta \leq n-1$, any subset $\phi\neq I\subset\{1,2\}$ and any $\upsilon\in\{\infty\}\cup\Pi_{2c'}$: The subspace $\Lambda_\eta\subset\mathbb{P}^{n-1}_{\mathbb{F}_\upsilon}$ spanned by the images of $\underline{e}_1,\cdots,\underline{e}_{n-\eta}$ is such that
\begin{equation}\label{eq:cond1}
\dim(X_I\cap\Lambda_\eta)_\upsilon=\max\{-1,\dim(X_I)_\upsilon-\eta\}
\end{equation}
and
\begin{equation}
\label{eq:cond2}
\dim \Sing((X_I\cap \Lambda_\eta)_\upsilon)=\max\{-1,\dim \Sing((X_I)_\upsilon)-\eta\}.\end{equation}
Here given $\emptyset\neq I\subset \{1,2\}$, let $X_I$ denote the complete intersection variety defined by the forms $\{F_i:i\in I\}$.
Moreover, the basis vectors $\underline{e}_i$ can be chosen so that
\begin{equation}
    \label{eq:cond4}
    L/2\leq |\underline{e}_i|\leq L
\end{equation}
for every $i=1,\cdots, n$ and 
\begin{equation}
    \label{eq:cond5}
    L^n\ll det(\underline{e}_1,\cdots,\underline{e}_n)\ll L^n
\end{equation}
for some constant $L=O_n(r+1).$
\end{lemma}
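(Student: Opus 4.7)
The strategy is to construct the basis $\underline{e}_1,\ldots,\underline{e}_n$ inductively, one vector at a time, adapting the hyperplane-section argument familiar from proofs such as that of Lemma 4.1 of Marmon--Vishe to the present two-quadrics setting. At stage $k$, having chosen $\underline{e}_1,\ldots,\underline{e}_{k-1}$ so that \eqref{eq:cond1}--\eqref{eq:cond2} hold for all $\eta\geq n-k+1$, I select $\underline{e}_k$ so that the enlarged subspace $\Lambda_{n-k}$ also satisfies these conditions. Inside the ambient $\Lambda_{n-k+1}$, this amounts to picking a hyperplane which cuts both $(X_I\cap\Lambda_{n-k+1})_\upsilon$ and its singular locus properly, simultaneously for each $\emptyset\neq I\subset\{1,2\}$ and each $\upsilon\in\{\infty\}\cup\Pi_{2c'}$.

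For any fixed place $\upsilon$ and subset $I$, a Bertini-type theorem valid over $\mathbb{F}_p$ once $p$ exceeds a threshold $c'=c'(n)$ shows that the inadmissible residues modulo $p$ form a Zariski-closed set of cardinality at most $C(n)p^{n-1}$. Choosing $c'$ sufficiently large ensures the density of bad residues mod $p$ stays strictly below a fixed fraction, uniformly for $p\in\Pi_{2c'}$; at the infinite place, the analogous real Bertini condition cuts out a non-empty open subset of $\mathbb{R}^n$. A lattice-point counting argument in a box of side $L\asymp r+1$, together with the Chinese Remainder Theorem, then produces an integer vector that is admissible at every place and linearly independent of the previously chosen $\underline{e}_i$. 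Dividing by the gcd of its coordinates yields a primitive vector without affecting the Bertini conditions. This gives \eqref{eq:cond4}, and \eqref{eq:cond5} follows from Hadamard's inequality (upper bound) together with arranging the real-place approximations so the $\underline{e}_i$ remain within a fixed positive angle of pairwise orthogonality (lower bound).

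The main difficulty is the linear size bound $L=O_n(r+1)$: a naive application of the Chinese Remainder Theorem alone would yield only the essentially exponential bound $\prod_{p\in\Pi}p$. The polynomial bound crucially relies on the uniform density argument above, in which small primes are excluded because the density of bad residues modulo $p$ is controlled only once $p>2c'$. For small primes $p\leq 2c'$ no admissibility condition is imposed, which is what keeps the box size independent of the magnitudes of the primes in $\Pi$ and so yields the claimed $L=O_n(r+1)$.
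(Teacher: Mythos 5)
Your proposal takes a genuinely different route from the paper, and in doing so it skips the one point the paper's proof is actually about. The paper does not rebuild the basis from scratch: it observes that the statement is word-for-word \cite[Lemma 4.1]{Marmon_Vishe} minus the hypothesis that $X_I$ has the expected dimension $n-1-|I|$, invokes that lemma as a black box whenever $Q_1,Q_2$ intersect properly, and then disposes of the improper configurations by explicit case analysis: $Q_1=Q_2=0$ (any basis works); $Q_2=\lambda Q_1$ (apply the cited lemma to the single hypersurface $X_1$); and $Q_1=L_1L_2$, $Q_2=L_1L_3$ with $L_2\not\propto L_3$ (note that $\Sing(X_1\cap X_2)$ is the hyperplane $L_1=0$ and apply the cited lemma to the cubic hypersurface $L_1L_2L_3=0$). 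Your inductive Bertini/CRT/lattice-point construction is, in substance, a re-derivation of the cited lemma itself, and it nowhere engages with improper intersections.

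That omission is a genuine gap, not merely a stylistic difference. The ``Bertini-type theorem'' you invoke controls the singular locus of a generic linear section, but the $\Sing$ appearing in \eqref{eq:cond2} is the rank-condition locus \eqref{eq:singlocus}, which the paper explicitly warns is \emph{not} the scheme-theoretic singular locus once $Q_1$ and $Q_2$ share a component; in the degenerate configurations it can be all of $X_I$ or a hyperplane. Off-the-shelf Bertini therefore does not apply there, and one must check by hand that the admissibility conditions are still cut out by a proper closed subset of bounded degree: for instance, when $Q_1=L_1L_2$, $Q_2=L_1L_3$, one must exclude subspaces $\Lambda$ on which the restrictions of $L_2$ and $L_3$ become proportional, or which lie inside $\{L_1=0\}$, since either event inflates $\Sing((X_{\{1,2\}}\cap\Lambda_\eta))$ beyond the right-hand side of \eqref{eq:cond2}. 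This verification is the entire content of the paper's proof and is absent from yours. A secondary issue: your mechanism for the linear bound $L=O_n(r+1)$ is only heuristic as stated --- excluding a fixed positive proportion of residues at each of $r$ primes and combining via CRT would a priori force a box of volume exponential in $r$, not linear side length --- but that machinery is exactly what the citation supplies, so the efficient course is to quote \cite[Lemma 4.1]{Marmon_Vishe} rather than re-prove it.
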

\begin{proof}

Note that the statement of this lemma is identical to that of \cite[Lemma 4.1]{Marmon_Vishe} except that in the latter there is an additional assumption that the closed subscheme $X_I\subset \mb{P}^{n-1}_{\Z}$ defined by $F_i=0$ for all $i\in I$ satisfies
\begin{equation}\label{eq:cond3}
\dim(X_I)_\upsilon=n-1-|I|.
\end{equation}
This is equivalent to the case when $X_1$ and $X_2$ intersect properly. Therefore, it is enough to consider different cases where we have an improper intersection. In each of these particular cases, somewhat softer argument works.

In the trivial case when $Q_1=Q_2=0$, any basis $\e_1,...,\e_n$ will work.

When $Q_2=\lambda Q_1$, where $\lambda\in K$ and $Q_1$ a non zero quadratic form then we may apply \cite[Lemma 4.1]{Marmon_Vishe} only to the hypersurface $X_1$ to find a basis $\e_1,...,\e_n$ which is chosen such that \eqref{eq:cond1} and \eqref{eq:cond2} hold for $I=\{1\}$. This choice will clearly work for all $I\subset\{1,2\}$.

In the remaining case when $Q_1=L_1L_2,Q_2=L_1L_3$, where $L_i=\v_i\cdot\x$ and $L_2$ is not a scalar multiple of $L_3$. In this case, it is easy to check that the singular locus of $X_1\cap X_2$ to is the hyperplane $L_1=0$. Here, we may apply \cite[Lemma 4.1]{Marmon_Vishe} to the single variety defined by the cubic form $L_1L_2L_3=0$. The basis $\Lambda$ that we get from this process will work here as well.
\end{proof}

We are now ready to prove the following generalisation of \cite[Proposition 2.1]{HeathBrown_Pierce17}. This will be particularly helpful for us when we are working with exponential sums of the form 
\[\starsum_{\ua}^q \sum_{\x}^q e_{q}(a_1Q_1(\x)+a_2Q_2(\x)+\uc\cdot\x),\]
where $q$ is square-full, in Section \ref{S621}. Here, as is standard the $*$ next to the sum denotes that the sum is over $(\ua,q)=1$ and the notation $e_q(x):=\exp (2\pi i x/q)$.
\begin{proposition}
\label{E22}
Let $\nu$ either denote a finite prime $\nu\gg_n 1$ or the infinite prime, let $\FF_\nu$ either denote the corresponding finite field or $\Q$, and let 
\begin{equation}\label{eq:mdef}
s_{\nu}(Q_1,Q_2):=\dim\Sing_{\FF_\nu}(V),
\end{equation} 
where $V$ is defined as above.  Moreover for every $(a_1,a_2)\in \FF_\nu^2\backslash (0,0)$, the rank of the matrix associated to the quadratic form $a_1Q_1+a_2Q_2$, $a_1M_1+a_2M_2$, satisfies
\begin{equation}\label{eq:geqnm2}
\rank(a_1M_1+a_2M_2)\geq n-s_{\nu}(Q_1,Q_2)-2.
\end{equation}
Moreover, there exists a set $\Gamma=\{\gamma_1,....,\gamma_k\}\subset\overline{\FF}_\nu$, such that as long as $a_1\neq \lambda_i a_2$ for some $1\leq i\leq k\leq n$, and $a_2\neq 0$, then 
\[\rank(a_1M_1+a_2M_2)\geq n-s_{\nu}(Q_1,Q_2)-1.\]

\end{proposition}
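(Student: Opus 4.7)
The plan is to combine a kernel-intersection dimension argument for the first bound with a semicontinuity-of-rank analysis for the second. For the first bound, set $r := \rank(a_1 M_1 + a_2 M_2)$ and $\Lambda := \ker(a_1 M_1 + a_2 M_2) \subseteq \overline{\FF}_\nu^n$, a linear subspace of affine dimension $n - r$. The key observation is that $a_1 Q_1 + a_2 Q_2$ vanishes identically on $\Lambda$: both its gradient $2(a_1 M_1 + a_2 M_2)\x$ and its value $\x^T(a_1 M_1 + a_2 M_2)\x$ are zero there. Taking $a_2 \neq 0$ (the case $a_2 = 0$ is symmetric), this forces $Q_2 \equiv -(a_1/a_2) Q_1$ on $\Lambda$, so $V \cap \Lambda$ is cut out by the single quadric $Q_1|_\Lambda$ on the linear space $\Lambda$ and has projective dimension at least $n - r - 2$. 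On the other hand, for any $\x \in V \cap \Lambda$ the relation $a_1 \nabla Q_1(\x) + a_2 \nabla Q_2(\x) = 0$ places $\x$ in $\Sing V$ by \eqref{eq:singlocus}, yielding $V \cap \Lambda \subseteq \Sing V$. Combining gives $n - r - 2 \leq s_\nu$, i.e.\ $r \geq n - s_\nu - 2$.

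For the second bound, the rank function $r(\lambda, \mu) := \rank(\lambda M_1 + \mu M_2)$ on $\PP^1_{\overline{\FF}_\nu}$ is lower semicontinuous, and the locus $T := \{(\lambda : \mu) : r(\lambda, \mu) \leq n - s_\nu - 2\}$ is the common zero set of all $(n - s_\nu - 1) \times (n - s_\nu - 1)$ minors of $\lambda M_1 + \mu M_2$, each a homogeneous polynomial of degree $n - s_\nu - 1 \leq n$ in $(\lambda, \mu)$. Hence $T$ is either finite of cardinality at most $n$ or all of $\PP^1$. In the finite case, taking $\Gamma \subseteq \overline{\FF}_\nu$ to be the image of $T \setminus \{(1:0)\}$ under $(\lambda : \mu) \mapsto \lambda/\mu$ produces the desired exceptional set of size at most $n$.

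The remaining task is to rule out the degenerate case $T = \PP^1$, in which $r \equiv n - s_\nu - 2$ and each kernel $\Phi(p) := \ker(\lambda_p M_1 + \mu_p M_2)$ has affine dimension exactly $s_\nu + 2$. Since $K_0 := \ker M_1 \cap \ker M_2 \subseteq \Sing V$, we have $\dim K_0 \leq s_\nu + 1$; a dimension count on $S := \bigcup_p \Phi(p)$, realised as the projection to $\FF^n$ of the incidence variety of affine dimension $s_\nu + 3$ (with generic fibre a single point outside $K_0$), gives $\dim S = s_\nu + 3$, and expressing $S$ as a cone over the morphism $\phi : \PP^1 \to \PP(\FF^n/K_0)$, $p \mapsto \Phi(p)/K_0$, forces $\dim K_0 = s_\nu + 1$. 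Thus each $\Phi(p)$ decomposes as $K_0 \oplus \langle v_p \rangle$ with $v_p \notin K_0$, and a direct expansion using the fact that $K_0 \subseteq \ker M_i$ gives $Q_1|_{\Phi(p)}(k + t v_p) = t^2 Q_1(v_p)$. If $Q_1(v_p) = 0$ for some $p$, then $\Phi(p) \subseteq V$, and the linear dependence of gradients on $\Phi(p)$ gives $\Phi(p) \subseteq \Sing V$, contradicting $\dim_{\PP} \Phi(p) = s_\nu + 1 > s_\nu$. If $Q_1(v_p) \neq 0$ for every $p$, choosing a polynomial lift $\tilde v : \PP^1 \to \FF^n/K_0$ of $\phi$ of some degree $d \geq 0$ makes $Q_1(\tilde v(\lambda, \mu))$ a nowhere-vanishing homogeneous polynomial of degree $2d$ on $\PP^1(\overline{\FF}_\nu)$, forcing $d = 0$; but then $\Phi(p)$ is constant in $p$, and comparing two distinct $p_1 \neq p_2$ yields $\Phi(p_1) = \Phi(p_2) \subseteq \ker M_1 \cap \ker M_2 = K_0$, again contradicting $\dim \Phi(p) > \dim K_0$.

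The main obstacle is the execution of this degenerate constant-rank case. A pairwise-intersection argument alone (using $\Phi(p_1) \cap \Phi(p_2) = K_0$ for distinct $p_1 \neq p_2$) cannot succeed, since pairwise-trivial intersections modulo $K_0$ are compatible with low ambient dimension, as any family of distinct lines through the origin in $\FF^2$ illustrates. The clean route requires exploiting the algebraic-morphism structure of $\phi$ and the polynomial lifting above, both of which rely essentially on working over the algebraic closure $\overline{\FF}_\nu$.
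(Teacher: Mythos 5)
Your first bound is correct and proceeds by a genuinely different route from the paper. The paper deduces \eqref{eq:geqnm2} from \cite[Proposition 2.1]{HeathBrown_Pierce17} via simultaneous diagonalisation in the non-singular case, and reduces the singular case to it through the hyperplane-section Lemma \ref{lem:hyperplane}, with improper intersections handled separately; your kernel argument ($\Lambda:=\ker(a_1M_1+a_2M_2)$, $a_1Q_1+a_2Q_2$ vanishes on $\Lambda$, so $V\cap\Lambda$ is a quadric hypersurface in $\PP(\Lambda)$ contained in $\Sing V$) gives \eqref{eq:geqnm2} in one stroke, uniformly in whether the intersection is proper. The reduction of the second bound to ruling out the constant-rank case via the $(n-s_\nu-1)\times(n-s_\nu-1)$ minors is also sound, and is again different from (and more intrinsic than) the paper's diagonalisation.

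The gap is in your treatment of the degenerate case $T=\PP^1$. You assert that the dimension count $\dim S=s_\nu+3$ together with ``expressing $S$ as a cone over $\phi:\PP^1\to\PP(\FF^n/K_0)$'' forces $\dim K_0=s_\nu+1$. It does not. If $d_0:=\dim K_0$ is anything with $0\le d_0\le s_\nu+1$, then each $\Phi(p)/K_0$ has dimension $s_\nu+2-d_0$, the union $S/K_0$ of this one-parameter family of subspaces has dimension at most $1+(s_\nu+2-d_0)$, and adding back $d_0$ recovers $\dim S\le s_\nu+3$ with equality generically; the count is therefore consistent with every admissible value of $d_0$ and singles out none. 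Worse, writing $\phi$ as a map to $\PP(\FF^n/K_0)$ (rather than to a Grassmannian) already presupposes $\dim\Phi(p)/K_0=1$, which is exactly the claim at issue. Everything after this point — the decomposition $\Phi(p)=K_0\oplus\langle v_p\rangle$, the identity $Q_1|_{\Phi(p)}(k+tv_p)=t^2Q_1(v_p)$, and the degree argument on the lift $\tilde v$ — depends on this unproved step, so the contradiction is not established. (There is also a secondary slip: from $Q_1(v_p)=0$ you conclude $\Phi(p)\subseteq V$, but this needs $Q_2(v_p)=0$ as well, which fails to follow from $\lambda_pQ_1(v_p)+\mu_pQ_2(v_p)=0$ at the point $\mu_p=0$.)

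What is actually needed in the constant-rank case is the stronger structural fact that every kernel $\Phi(p)$ is totally isotropic for \emph{both} $Q_1$ and $Q_2$ — not merely for $a_1Q_1+a_2Q_2$. This follows from the Kronecker--Thompson canonical form of a singular pencil of symmetric matrices (no regular blocks can occur when the rank is constant, and the kernels of the remaining blocks lie in the common isotropic subspaces), and it yields $\PP\bigl(\bigcup_p\Phi(p)\bigr)\subseteq\Sing V$, whence your own computation $\dim S=s_\nu+3$ gives $s_\nu\ge s_\nu+2$, the desired contradiction. Without some input of this kind (or an equivalent inductive argument on the pencil), the case $T=\PP^1$ is not excluded and the second assertion of the proposition remains unproved.
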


\begin{proof}
Let $M_1$ and $M_2$ denote the integer matrices defining the forms $Q_1$ and $Q_2$ respectively. We firstly note that for $s_{\nu}(Q_1,Q_2)=-1$, we recover \eqref{eq:geqnm2} from \cite[Proposition 2.1]{HeathBrown_Pierce17}. In this case may also use \cite[Proposition 2.1]{HeathBrown_Pierce17} to simultaneously diagonalise $M_1$, $M_2$, letting us instead work with 
\[Q_i'(\x):=\sum_{j=1}^n\lambda_{i,j}x_j^2, \quad M_i':= \Diag(\un{\lambda_{i}}).\]
In particular, we have
\[s_{\nu}(Q_1',Q_2')=s_{\nu}(Q_1,Q_2)=-1, \quad \rank(a_1M_1'+a_2M_2')=\rank(a_1M_1+a_2M_2),\]
for every $\ua\in\FF^2_{\nu}\backslash(0,0)$. Next, we note that $\rank(a_1M_1'+a_2M_2')<n$ if and only if there is some $j\in\{1,\cdots, n\}$ such that $a_1\lambda_{1,j}+a_2\lambda_{2,j}=0$, which imposes the desired restriction on $(a_1,a_2)$ provided that $(\lambda_{1,j},\lambda_{2,j})\neq (0,0)$. However, if $(\lambda_{1,j},\lambda_{2,j})=(0,0)$, then it is easy to see from the definition of $Q_i'(\x)$ that 
\[\nabla Q_1'(m\e_j)=\nabla Q_2'(m\e_j)=\un{0}\]
for every $m\in \overline{\FF}_{\nu}$ (provided $\nu> 2$), where $\e_j$ is the j-th vector in the standard basis. This implies that $m\e_j\in \Sing(Q_1',Q_2')$, and so $s_{\nu}(Q_1',Q_2')\geq 0$, giving us a contradiction.

If $s_{\nu}(Q_1,Q_2)\neq -1$, we invoke Lemma \ref{lem:hyperplane}. As long as $\nu\gg_n 1$, we obtain a basis $\ue_1,...,\ue_n$ of $\FF_\nu^n$ such that the system of quadrics $\tilde{Q}_1,\tilde{Q}_2$ corresponding to the restriction of $Q_1$ and $Q_2$ onto the subspace $\Lambda_{n-s_{\nu}-1}$ obeys \eqref{eq:cond1} - \eqref{eq:cond2}. This clearly defines a system of non-singular quadratic forms defined over $n-s_{\nu}-1$, whose complete intersection is non-singular over $\overline{\FF}_\nu$ as well. Now let $\tilde{M}_1$ and $\tilde{M}_2$ denote the integer matrices defining the forms $\tilde{Q}_1$, and $\tilde{Q}_2$ respectively. The Lemma now follows from noticing that $$\rank(a_1M_1+a_2M_2)\geq \rank(a_1\tilde{M}_1+a_2\tilde{M}_2),$$
for any pair $(a_1,a_2)\in\FF_\nu^2\setminus (0,0) $ and further using our analysis of the non-singular case above.
\end{proof}
%

One of the key bounds for exponential sums in this work will be provided by Weyl differencing. Typically, these bounds use a `Birch-type' singular locus $\sigma_K'$ as defined in \eqref{eq: pair of quadrics: sigma_K'(F,G): defn (F,G deg d)} instead of the singular locus \eqref{eq:singlocus} used here. A relation between the two has been studied in \cite{Browning-Heath-Brown14}. A minor modification of \cite[Lemma 1.1]{Myer} readily provides us with the following result:
\begin{lemma}
\label{P: background on a pair of quadrics: sigma'<= sigma+1}
Let $F,G$ be non-constant forms of any degree, $K$ be a field, and let
\begin{align}
\label{eq: pair of quadrics: sigma_K(F): defn (F,G deg d)}
\sigma_K(F)&:=\dim \{ \x\in \mb{P}^{n-1}_{\overline{K}} \: : \: F(\x)=0,\, \nabla F(\x)=\un{0}\},\\
\label{eq: pair of quadrics: sigma_K'(F,G): defn (F,G deg d)}
\sigma_K'(F,G)&:=\dim \{ \x\in \mb{P}^{n-1}_{\overline{K}} \: : \: \rank
\begin{pmatrix}
\nabla F(\underline{x})\\ \nabla G(\underline{x})
\end{pmatrix} <2 \}, \\
\label{eq: pair of quadrics: sigma_K(F,G): defn (F,G deg d)}
\sigma_K(F,G)&:=\dim \Sing_K(F,G).
\end{align}
Then, we have
\[\sigma_K(a_1F+a_2G)\leq \sigma_K'(F,G)\leq \sigma_K(F,G)+1,\]
for any $(a_1,a_2)\in K\backslash \{(0,0)\}$.
\end{lemma}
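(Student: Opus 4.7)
The plan is to handle the two inequalities separately, directly from the definitions.

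For $\sigma_K(a_1F+a_2G)\leq\sigma_K'(F,G)$, I would establish the stronger containment of defining sets: any $\x$ with $\nabla(a_1F+a_2G)(\x)=\un{0}$ has $a_1\nabla F(\x)+a_2\nabla G(\x)=\un{0}$, which forces $\nabla F(\x)$ and $\nabla G(\x)$ to be linearly dependent (since $(a_1,a_2)\neq(0,0)$). Hence $\rank\begin{pmatrix}\nabla F(\x)\\\nabla G(\x)\end{pmatrix}<2$, placing $\x$ in the locus defining $\sigma_K'(F,G)$; the condition $(a_1F+a_2G)(\x)=0$ imposes no further obstruction.

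For the second inequality, let $W\subset\PP^{n-1}_{\overline{K}}$ denote the rank-$<2$ locus and $V=\{F=G=0\}$, so that $\Sing_K(V)=W\cap V$ and $\sigma_K(F,G)=\dim(W\cap V)$. I would fix a top-dimensional irreducible component $W_0\subseteq W$ and aim to prove $\dim(W_0\cap V)\geq \dim W_0 - 1$, which immediately gives the required bound. The argument then splits into two cases. If either $F$ or $G$ vanishes identically on $W_0$, then $W_0\cap V$ is cut out in $W_0$ by the single remaining form and so has codimension at most one in $W_0$.

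Otherwise, with $F,G\not\equiv 0$ on $W_0$, I would work on the open dense subset $U\subseteq W_0$ where $\nabla F\neq\un{0}$, which is nonempty since Euler's identity $\x\cdot\nabla F=d_FF$ would otherwise force $F\equiv 0$ on $W_0$. On $U$ the rank condition gives $\nabla G=\lambda\nabla F$ for a regular function $\lambda$, and dotting with $\x$ combined with Euler for both $F$ and $G$ yields $d_GG=\lambda d_FF$ on $U$. Eliminating $\lambda$ produces the polynomial identity $d_GG\,\partial_iF=d_FF\,\partial_iG$ on $W_0$, valid for every $i$. A short logarithmic-derivative calculation then shows that $F^{d_G}/G^{d_F}$ has vanishing partials on the open locus $\{FG\neq 0\}\cap W_0$, hence is constant there; by polynomial extension, $F^{d_G}=cG^{d_F}$ on all of $W_0$, with $c\neq 0$ since $F\not\equiv 0$. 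Consequently $\{F=0\}\cap W_0=\{G=0\}\cap W_0$, so $V\cap W_0=\{F=0\}\cap W_0$ is a hypersurface in $W_0$ of dimension $\dim W_0 - 1$, as required.

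The main obstacle is the second case—specifically, upgrading the rank condition $\nabla G=\lambda\nabla F$, which a priori only holds as a rational relation on the open set $U$, to the global polynomial identity $F^{d_G}=cG^{d_F}$ on $W_0$. This is the step where Euler's identity and the logarithmic-derivative trick enter crucially, and is where the ``minor modification'' of \cite[Lemma 1.1]{Myer} is needed. The argument tacitly requires $d_F,d_G$ to be nonzero in $K$, which is automatic in the characteristic-zero setting of the paper.
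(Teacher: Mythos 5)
The paper itself contains no proof of this lemma: it is dispatched by the sentence ``A minor modification of \cite[Lemma 1.1]{Myer} readily provides us with the following result''. Your self-contained argument is therefore a genuinely different route, and it is essentially correct in characteristic zero. The first inequality is exactly as you say: $\nabla(a_1F+a_2G)(\x)=\un{0}$ with $(a_1,a_2)\neq(0,0)$ forces the two gradients to be linearly dependent, so the whole defining locus of $\sigma_K(a_1F+a_2G)$ sits inside the rank-drop locus. For the second inequality, fixing a top-dimensional component $W_0$ of the rank-drop locus and proving $\dim(W_0\cap V)\geq \dim W_0-1$ is the right reduction, and the chain (Euler identity, elimination of $\lambda$, the identity $d_GG\,\partial_iF=d_FF\,\partial_iG$ on $W_0$, hence $F^{d_G}=cG^{d_F}$ on $W_0$) does give $V\cap W_0=\{F=0\}\cap W_0$, a divisor in $W_0$. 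One point you should tighten: the ambient partials $\partial_i$ do not act on functions on $W_0$, so ``vanishing partials on $W_0$'' is not literally meaningful. The correct formulation is that for \emph{any} $\overline{K}$-derivation $D$ of the function field $L$ of the affine cone over $W_0$ one has $D(f)=\sum_i \partial_i f\cdot D(x_i)$, whence your identity gives $D\bigl(F^{d_G}/G^{d_F}\bigr)=0$ for all such $D$; in characteristic zero an element of $L$ killed by every derivation lies in $\overline{K}$ (which is algebraically closed in $L$), and the rest of your argument goes through.

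The one genuine caveat is the characteristic. You note that Euler's identity requires $d_F,d_G$ to be invertible and dismiss this as automatic ``in the characteristic-zero setting of the paper'', but the lemma is stated for an arbitrary field $K$ and is in fact applied with $K=\FF_p$ in Section 9, where the bound $s_p(a_1F+a_2G)\leq s_p(F,G)+1$ is used for the convergence of the singular series. In characteristic $p$ your constancy step fails as written: an element of $L$ annihilated by all derivations need only lie in $L^p$, not in $\overline{K}$, so $F^{d_G}/G^{d_F}$ could a priori be a non-constant $p$-th power. To cover the paper's actual use of the lemma you would either need a supplementary argument in characteristic $p>\max(d_F,d_G)$ (or a reduction to characteristic zero), or simply to invoke \cite[Lemma 1.1]{Myer} for that case as the authors do. This should be flagged explicitly rather than left implicit.
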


Our main exponential sum bound for square-full moduli $q$ will be in terms of the size of the null set
\begin{equation}\label{eq:NullMdef}
\Null_{q}(M):=\{\x\in (\Z/q\Z)^n:M\x=\vecnull\},
\end{equation}
for some matrix $M$. The following three Lemmas will be related to this set.
\begin{lemma}
\label{T1}
For every $u,v\in \N$, and every $M\in M_n(\Z)$, we have
\[\#\Null_{uv}(M)\leq \#\Null_u(M)\#\Null_v(M),\]
with equality if $(u,v)=1$.
\end{lemma}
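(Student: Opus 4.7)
The plan is to analyse the fibres of the natural reduction map between null-sets of different moduli.

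First I would set up the reduction-mod-$u$ map $\pi:(\Z/uv\Z)^n\to(\Z/u\Z)^n$. If $x\in\Null_{uv}(M)$ then $Mx\equiv 0\pmod{uv}$, so in particular $Mx\equiv 0\pmod u$, which shows $\pi\bigl(\Null_{uv}(M)\bigr)\subseteq\Null_u(M)$. Hence
\[
\#\Null_{uv}(M)=\sum_{y\in\pi\bigl(\Null_{uv}(M)\bigr)}\#\bigl(\pi^{-1}(y)\cap\Null_{uv}(M)\bigr)\leq\sum_{y\in\Null_u(M)}\#\bigl(\pi^{-1}(y)\cap\Null_{uv}(M)\bigr),
\]
so the inequality will follow once each fibre is shown to have at most $\#\Null_v(M)$ elements.

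For a fixed $y\in\Null_u(M)$, choose any lift $\tilde y\in\Z^n$; every $x\in\pi^{-1}(y)$ can be written uniquely as $x\equiv\tilde y+uz\pmod{uv}$ with $z\in(\Z/v\Z)^n$. Since $M\tilde y\equiv 0\pmod u$, write $M\tilde y=uw$ for some $w\in\Z^n$. Then
\[
Mx\equiv u(w+Mz)\pmod{uv},
\]
so $x\in\Null_{uv}(M)$ if and only if $Mz\equiv-w\pmod v$. This is either insoluble, in which case the fibre is empty, or the solution set is a coset of $\Null_v(M)$ in $(\Z/v\Z)^n$, giving exactly $\#\Null_v(M)$ solutions. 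Either way the fibre has size at most $\#\Null_v(M)$, and summing over $y\in\Null_u(M)$ yields the desired inequality.

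For the equality when $(u,v)=1$, I would invoke the Chinese Remainder Theorem: the ring isomorphism $\Z/uv\Z\cong\Z/u\Z\times\Z/v\Z$ induces a bijection $(\Z/uv\Z)^n\cong(\Z/u\Z)^n\times(\Z/v\Z)^n$ which intertwines the action of $M$ componentwise. Since the condition $Mx\equiv 0\pmod{uv}$ decouples into $Mx\equiv 0\pmod u$ and $Mx\equiv 0\pmod v$, this bijection restricts to $\Null_{uv}(M)\cong\Null_u(M)\times\Null_v(M)$, completing the proof.

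The argument is essentially routine linear algebra over $\Z/uv\Z$; the only point requiring mild care is verifying that the lift $\tilde y$ and the cocycle $w$ are well defined modulo $v$ so that the equation $Mz\equiv-w\pmod v$ genuinely has solution set either empty or a translate of $\Null_v(M)$. I do not anticipate any real obstacle.
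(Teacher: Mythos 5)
Your proposal is correct and follows essentially the same route as the paper: both fibre $\Null_{uv}(M)$ over its reduction modulo $u$ and show each fibre is (empty or) a coset of $\Null_v(M)$, hence of size at most $\#\Null_v(M)$, with the Chinese Remainder Theorem giving equality when $(u,v)=1$. Your write-up is slightly more explicit about the coset structure of the fibres, but the argument is the same.
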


\begin{proof}
It is easy to prove that $\#\Null_{q}(M)$ is a multiplicative function, so we will not prove that
\begin{equation}
    \label{eq: background on a pair of quadrics: Null multiplicative: gcd=1}
    \#\Null_{uv}(M)= \#\Null_u(M)\#\Null_v(M),
\end{equation} when $(u,v)=1$. We will be brief when showing the inequality, as this is a standard Hensel Lemma type of argument. If $\x\in \Null_{uv}(M)$, then we must have $\x\in\Null_u(M)$. Hence, if we write $\x:=\y+u\z$, then $\y$ must be in $\Null_u(M)$.

Now, fix $\y$ and assume that there is some $\z_1,\z_2$ (not necessarily distinct) such that $\y+u\z_i\in \Null_{uv}(M)$. Then
\[M(\y+u\z_i)\equiv \un{0} \mod uv,\]
and so 
\[M(\y+u\z_2)-M(\y+u\z_1)=uM(\z_2-\z_1)\equiv \un{0} \mod uv.\]
Therefore, upon letting $\z_2:=\z_1+\z'$ we must have
\[M\z'\equiv \un{0} \mod v.\]
Hence, there can only be at most $\#\Null_v(M)$ possible values for $\z'$ and so there can only be at most $\#\Null_v(M)$ values for $\z$ such that $\y+u\z\in \Null_{uv}(M)$ for any given $\y$. We also have that $\y$ must be in $\Null_{u}(M)$. This gives us
\[\#\Null_{uv}(M)\leq \#\Null_{u}(M)\#\Null_{v}(M),\]
as required.
\end{proof}
In both Section \ref{Sec: Exponential Sums: Initial Bounds} and \ref{sec:quadfinal}, we will need to bound $\#\Null_{p}(M)$ for matrices of the form $M(\ua):=a_1M_1+a_2M_2$, where $M_1$ and $M_2$ are symmetric matrices associated to some quadratic forms $Q_1(\x)$, $Q_2(\x)$. In Proposition \ref{E22}, we noted that for most values of $\ua$, $\rank_{p}(M(\ua))\geq n-s_{p}-1$, but there were potentially a few lines of $\ua$'s where $\rank_{p}(M(\ua))= n-s_{p}-2$. Naturally, a lower bound on the size of the rank of a matrix leads to an upper bound on the dimension of the nullspace of a matrix (due to the rank-nullity theorem), and so using $\rank_{p}(M(\ua))\geq n-s_{p}-2$ in order to bound $\#\Null_{p}(M(\ua))$ for every $\ua$ would be wasteful. This will lead us to considering averages of $\#\Null_{p}(M(\ua))$, where $\ua$ is allowed to vary. This is the topic of the next lemma.
\begin{lemma}
\label{T20}
Let $Q_1,Q_2$ be quadratic forms in $n$ variables, $q\in\N$, and 
\[d:=\prod_{i=1}^r p_i\]
be squarefree (in other words, the $p_i$'s are prime) such that $d\,|\,q$. Furthermore, let $M_1, M_2$ be integer matrices defining $Q_1$ and $Q_2$ respectively, and let $s_p=s_p(Q_1,Q_2)$ be as defined in \eqref{eq:mdef} for $K=\FF_p$, $p$ a prime. Then
\[S(d,q):=\starsum_{\ua  \bmod{q}} \#\Null_d(a_1M_1+a_2M_2)\ll_n q^2 \prod_{i=1}^r p_i^{s_{p_i}+1}.\]
\end{lemma}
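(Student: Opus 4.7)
The strategy is a three–stage reduction: first reduce the modulus from $q$ to $d$, then factor the $d$–sum into a product over the primes $p_i\mid d$ via CRT, and finally bound each prime–local sum using Proposition \ref{E22}.

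\textbf{Step 1 (reduce modulus to $d$).} Since $d\mid q$, the quantity $\#\Null_d(a_1M_1+a_2M_2)$ depends on $\ua$ only through its residue $\ua_0:=\ua\bmod d$. Moreover, the condition $(\ua,q)=1$ forces $\ua_0\not\equiv(0,0)\bmod p_i$ for each $p_i\mid d$. Grouping the sum by $\ua_0$ and using that each such class has at most $(q/d)^2$ lifts to $\Z/q\Z$, we get
\[
S(d,q)\ \leq\ (q/d)^2\sum_{\substack{\ua_0\bmod d \\ \ua_0\not\equiv \un{0}\bmod p_i,\ \forall i}}\#\Null_d(a_{0,1}M_1+a_{0,2}M_2).
\]

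\textbf{Step 2 (factor via CRT).} Lemma \ref{T1} with pairwise coprime moduli $p_1,\dots,p_r$ gives $\#\Null_d(M)=\prod_i\#\Null_{p_i}(M)$, and $\#\Null_{p_i}(a_1M_1+a_2M_2)$ depends only on $\ua\bmod p_i$. Chinese remaindering the outer sum then yields
\[
S(d,q)\ \leq\ (q/d)^2\prod_{i=1}^{r} T_{p_i}',\qquad\text{where}\qquad T_p':=\sum_{\substack{\ua\bmod p\\ \ua\neq(0,0)}}\#\Null_p(a_1M_1+a_2M_2).
\]

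\textbf{Step 3 (local bound $T_p'\ll_n p^{s_p+3}$).} This is the key step and the main obstacle. Proposition \ref{E22} (applicable since $p\gg_n 1$) supplies two rank lower bounds for $M(\ua):=a_1M_1+a_2M_2$ over $\FF_p$:
\begin{itemize}
\item[(i)] For every nonzero $\ua$, $\rank M(\ua)\geq n-s_p-2$, hence $\#\Null_p(M(\ua))\leq p^{s_p+2}$.
\item[(ii)] Away from the exceptional set $E:=\{a_2=0\}\cup\bigcup_{i=1}^{k}\{a_1=\lambda_i a_2\}$ with $k\leq n$, we have $\rank M(\ua)\geq n-s_p-1$, hence $\#\Null_p(M(\ua))\leq p^{s_p+1}$.
\end{itemize}
Since $E$ is a union of at most $n+1$ lines through the origin in $\FF_p^2$, it contains at most $(n+1)p$ points. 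Splitting the sum $T_p'$ according to whether $\ua\in E$ or not,
\[
T_p'\ \leq\ (n+1)p\cdot p^{s_p+2}\ +\ p^2\cdot p^{s_p+1}\ \ll_n\ p^{s_p+3}.
\]

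\textbf{Step 4 (combine).} Substituting the local bound into Step 2 gives
\[
S(d,q)\ \ll_n\ (q/d)^2\prod_{i=1}^{r}p_i^{s_{p_i}+3}\ =\ (q/d)^2\,d^2\prod_{i=1}^{r}p_i^{s_{p_i}+1}\ =\ q^2\prod_{i=1}^{r}p_i^{s_{p_i}+1},
\]
which is the claimed bound.

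The only nontrivial input is Step 3, and even there the argument is just a careful exploitation of the dichotomy in Proposition \ref{E22}: the weaker bound costs an extra factor of $p$, but is only incurred on $O_n(p)$ values of $\ua$ rather than on all $p^2$, which is precisely what makes the two contributions balance at size $p^{s_p+3}$.
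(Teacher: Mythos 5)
Your proposal is correct and follows essentially the same route as the paper: reduce the modulus from $q$ to $d$ at a cost of $(q/d)^2$, factor over the primes dividing $d$ by CRT and Lemma \ref{T1}, and then split the local sum over $\ua\bmod p$ into the $O_n(p)$ exceptional pairs (where only $\rank\geq n-s_p-2$ is available) and the generic pairs (where $\rank\geq n-s_p-1$), yielding $p^{s_p+3}$ in both regimes. The only point you gloss over is the finitely many primes $p\ll_n 1$ where Proposition \ref{E22} does not apply, but there the trivial bound $T_p'\leq p^{n+2}\ll_n p^{s_p+3}$ suffices, exactly as in the paper.
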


\begin{proof}
We firstly note that upon setting $\ua=\b+d\uc$,
\begin{align}
    \label{eq: pair of quadrics: S(d,q) bound: 1}
    S(d,q)&=\starsum_{\ua \bmod{q}} \#\Null_d(a_1M_1+a_2M_2)\leq \sum_{\substack{\ua \bmod{q}\\ (a_1,a_2,d)=1}} \#\Null_d(a_1M_1+a_2M_2)\nonumber\\
        &= \sum_{\substack{\b \bmod{d}\\ (b_1,b_2,d)=1}} \#\Null_d(b_1M_1+b_2M_2) \sum_{\uc \bmod{q/d}} 1= \Bo\frac{q}{d}\Bc^2\starsum_{\b \bmod{d}} \#\Null_d(b_1M_1+b_2M_2)\nonumber\\
        &= \Bo\frac{q}{d}\Bc^2 S(d,d).
\end{align}
For convenience, define 
\begin{align}
    \label{eq: pair of quadrics: T(d) defn}
    T(d):=S(d,d).
\end{align}
Using the Chinese remainder theorem, it is easy to see that $T(d)$ is a multiplicative function. In particular, we have 
\begin{align}
    \label{eq: pair of quadrics: T(d) prime decomp}
    T(d)=\prod_{\substack{i=1\\ p_i\mid d \textrm{ where } p_i \textrm{ prime }}}^r T(p_i).
\end{align}It is therefore sufficient to consider
\begin{align}\label{eq:spk}
T(p)=\starsum_{\ua \bmod{p}} \#\{\x\bmod{p}:(a_1M_1+a_2M_2)\x\equiv \vecnull\bmod{p}\},
\end{align}
where p is a prime. When $p\ll_n 1$, the right hand side is trivially $ O(p^{2})$. It is therefore enough to consider the case $p\gg_n 1$, where the implied constant is chosen as in the statement in Proposition \ref{E22}.
Proposition \ref{E22} now implies that except for $O_n(p)$ different exceptional pairs $(a_1,a_2)$, $\rank(a_1M_1+a_2M_2)\geq n-s_p-1$. Moreover, for the exceptional pairs we still have $\rank(a_1M_1+a_2M_2)= n-s_p-2$. Finally, we note that if $M$ is an integer matrix rank $k$ over $\FF_p$, it is easy to see that
$$\#\{\x\in\FF_p^n:M\x=\vecnull\}\ll p^{n-k}.$$
Applying these results to \eqref{eq:spk} gives us
\begin{align*}
T(p)&\ll \starsum_{\substack{\ua \bmod{p}\\ \rank(a_1M_1+a_2M_2)\geq n-s_p-1}} p^{s_p+1} +  \starsum_{\substack{\ua \bmod{p}\\ \rank(a_1M_1+a_2M_2)= n-s_p-2}} p^{s_p+2}\\
    &\ll p^2\times p^{s_p+1}+p\times p^{s_p+2}\\
    &\ll p^{2+s_p+1},
\end{align*}
and so 
\begin{align*}
    T(d)\ll\prod_{i=1}^r p_i^{2+s_{p_i}+1} = d^2 \prod_{i=1}^r p_i^{s_{p_i}+1}
\end{align*}
by \eqref{eq: pair of quadrics: T(d) prime decomp}. Hence, by \eqref{eq: pair of quadrics: S(d,q) bound: 1} - \eqref{eq: pair of quadrics: T(d) defn}, we have
\begin{align*}
S(d,q)&\leq \Bo\frac{q}{d}\Bc^2 T(d)\ll q^2 \prod_{i=1}^r p_i^{s_{p_i}+1},
\end{align*}
as required.
\end{proof}
During the process of bounding quadratic exponential sums, we will need to bound the size of the set 
\begin{align}
    \label{eq: pair of quadrics: N_b,q defn}
    N_{\un{b},q}(M):=\{\x\in (\Z/q\Z)^n \: : \: M\x \equiv \frac{q}{2}\,\un{b} \pmod{q}\}.
\end{align}
The next lemma will help us to do this by letting us relate $N_{\un{b},q}(M)$ to $\Null_q(M)$.
\Lb
\label{lem:N_b=y_b+ Null}
Let $q\in \N$ be even, $M\in M_n(\Z/q\Z)$, and let $N_{\un{b},q}(M)$ be defined as in \eqref{eq: pair of quadrics: N_b,q defn}. Then for every $\un{b}\in \{0,1\}^n$, either $N_{\un{b},q}(M)=\emptyset$ or there exists some $\y_{\un{b}}\in (\Z/q\Z)^n$ such that
\[N_{\un{b},q}(M)= \y_{\un{b}} + \Null_q(M).\]
\Le

\begin{proof}
If we assume that $N_{\un{b},q}(M)\neq \emptyset$, then there must be some $\y\in N_{\b,q}(M)$. By the definition of $\Null_q(M)$, if $\y_0\in \Null_q(M)$, then $\y+\y_0\in N_{\b,q}(M)$. Hence 
\begin{equation}
    \label{eq:Null subset N_(b,q)}
    \y+\Null_q(M)\subset N_{\b,q}(M),
\end{equation}
and so $\#N_{\b,q}(M)\geq \#\Null_q(M)$. 

Likewise, we note that if $\y_1,\y_2\in N_{\b,q}(M)$, then $\y_1-\y_2\in \Null_q(M)$, and so $\#N_{\b,q}(M)\leq \#\Null_q(M)$. Therefore 
\[\#N_{\b,q}(M)=\#\Null_q(M)=\#(\y+\Null_q(M)).\]
Combining this with \eqref{eq:Null subset N_(b,q)} gives us the result we desire.
\end{proof}

\section{Initial setup}
\label{C: Initial setup}
In this section we will start with some initial considerations which will help us to properly set up the circle method and state our main results which will be used to prove Theorem \ref{thm:main thm 1}. As stated before, the Hardy Littlewood circle method transforms the task of answering Theorem \ref{thm:main thm 1} to proving  an asymptotic formula:
\begin{equation}
    \label{E01}
    \int_0^1 \int_0^1 S(\alpha_1,\alpha_2) d\alpha_1d\alpha_2= C_{X}P^{n-6}+o(P^{n-6}).
\end{equation}
Here $S(\ualf)$ is the exponential sum as defined in \eqref{eq:Salfdef}, and $C_X$ denotes a product of local densities.  We will start by splitting the box $[0,1]^2$ into a set of major arcs and minor arcs as follows:

For any pair $(\alpha_1,\alpha_2)$, we can use a two dimensional version of Dirichlet's approximation theorem to find a simultaneous approximation $(a_1/q,a_2/q)$. In particular upon taking $Q=\floor*{P^{3/2}}$, there exists $\underline{a}=(a_1,a_2)\in\mathbb{Z}^2$ and $q\in\mathbb{N}$ s.t. $(a_1,a_2,q)=1$, $q\leq Q$, and 
\begin{equation}
\label{E11}
    \Big{|}\alpha_1-\frac{a_1}{q}\Big{|}\leq \frac{1}{qQ^{1/2}}, \quad\quad \Big{|}\alpha_2-\frac{a_2}{q}\Big{|}\leq \frac{1}{qQ^{1/2}}.
\end{equation}
We can therefore write
\begin{equation}
\label{eq:uzdef}
\alpha_1=\frac{a_1}{q}+z_1, \quad\quad \alpha_2=\frac{a_2}{q}+z_2,
\end{equation}
for some $|\z|:=\max\{|z_1|,|z_2|\}\leq 1/qQ^{1/2}$. The choice $Q=\floor*{P^{3/2}}$ arises from our final optimisation of various bounds. We explain this in detail in Section \ref{S: Explaining Q}.

Now let $0<\Delta<1$ be some small parameter also to be chosen later, and define
\[\mathfrak{M}_{q,\underline{a}}(\Delta):=\Big{\{}(\alpha_1,\alpha_2) \mod 1 \;:\; \Big{|}\alpha_i-\frac{a_i}{q}\Big{|}\leq P^{-3+\Delta}, i=1,2\Big{\}}.\]
We then define the set of major arcs to be
\begin{align}
    \label{Initial set up: major arcs definition}
    \mathfrak{M}=\mathfrak{M}(\Delta):=\bigcup_{q\leq P^\Delta}\bigcup_{\substack{\underline{a}\bmod{q}\\ (\underline{a},q)=1}} \mathfrak{M}_{q,\underline{a}}(\Delta).
\end{align}
This union of sets is disjoint if $P^{-2\Delta}\geq 2 P^{-3+\Delta}$, namely when $\Delta<1$ and when $P$ is sufficiently large. Moreover, it is easy to check that $P^{-3+\Delta}<1/qQ^{1/2}$ for any $q\leq Q$, provided that $Q<P^{3-\Delta}$. This is certainly true for our final choice $Q=P^{3/2}$ since we assumed $\Delta<1$, and so we have that each set $\mathfrak{M}_{q,\underline{a}}$ is contained in the corresponding range from \eqref{E11}. Therefore, the major arcs give the following contribution to the integral in \eqref{E01}:
\begin{equation}
\label{E02}
    S_{\mathfrak{M}}:=\sum_{1\leq q\leq P^{\Delta}} \,\,\starsum_{\underline{a}\bmod{q}}\int_{|\underline{z}|\leq P^{-3+\Delta}} S_{\underline{a}}(q,\underline{z})d\underline{z},
\end{equation}
where 
\begin{equation}
\label{eq:saqdef}
S_{\underline{a}}(q,\underline{z}):=S(\ua/q+\uz).
\end{equation}
We then define the minor arcs to be $\mathfrak{m}=[0,1]^2\backslash\mathfrak{M}$. By construction of $\mathfrak{M}$, the individual minor arcs must therefore either have 
\begin{equation}
\label{eq:minorar}
P^\Delta<q\leq Q \textrm{ and } |\uz|<(qQ^{1/2})^{-1},\textrm{ or } 1\leq q\leq P^\Delta \textrm{ and } P^{-3+\Delta}<|\uz|<(qQ^{1/2})^{-1}.
\end{equation}
Hence, we can bound the minor arcs contribution, upon further bringing the average over $\ua$ inside the integral in \eqref{E01}, by
\begin{align}
\label{E03}
    S_{\mathfrak{m}}=\sum_{1\leq q\leq P^{\Delta}} \sum_{\underline{a}}^q{}^*\int_{P^{-3+\Delta} \leq |\underline{z}|\leq 1/qQ^{1/2}} S(q,\underline{z}) d\underline{z} + \sum_{P^{\Delta}\leq q\leq Q } \sum_{\underline{a}}^q{}^*\int_{|\underline{z}|\leq 1/qQ^{1/2}} S(q,\underline{z}) d\underline{z}.
\end{align}
Here
\begin{equation}\label{eq:sqdef}
S(q,\z):=\starsum_{\ua\bmod{q}}|S_{\ua}(q,\z)|.
\end{equation}

Our techniques for dealing with the major arcs contribution are standard. Let 
\begin{equation}
\begin{split}
\mathfrak{S}(R)&:=\sum_{q=1}^R q^{-n}\starsum_{\underline{a}\bmod{q}} \sum_{\x  \bmod{q}} e_q(a_1F(\x)+a_2G(\x)),\\ \mathfrak{J}(R)&:=\int_{|\underline{z}|<R}\int_{\mathbb{R}^n}\omega(\underline{x})e(z_1F(\underline{x})+z_2G(\underline{x}))\: d\underline{x}d\underline{z},
\end{split}
\end{equation}
and let
\begin{equation}
\mathfrak{S}:=\lim_{R\rightarrow\infty} \mathfrak{S}(R),\quad \mathfrak{J}=\lim_{R\rightarrow \infty} \mathfrak{J}(R),
\end{equation}denote the singular series and the corresponding singular integral, provided the limits exist. Our main major arcs estimate is the following Lemma:
\begin{lemma}
\label{T3001}
Assume that $n-\sigma(F,G)\geq 34$, where $\sigma(F,G):=\sigma(X_{F,G})$ as defined in \eqref{eq: Statement of results: sigma definition}, and assume that $\mathfrak{S}$ is absolutely convergent, satisfying
\[\mathfrak{S}(R)=\mathfrak{S}+O_{\phi}(R^{-\phi}).\]
Then provided that we have $\Delta\in (0,1/7)$, 
\[S_{\mathfrak{M}}=\mathfrak{S}\mathfrak{J}P^{n-6}+O_{\phi}(P^{n-6-\delta}).\]
\end{lemma}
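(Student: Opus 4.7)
The plan is to reduce the exponential sum on each major arc to the product of a complete exponential sum modulo $q$ and an oscillatory integral via Poisson summation, and then to complete both the truncated $q$-sum and the truncated $\uz$-integral, using the assumed decay rate of $\mathfrak{S}(R)-\mathfrak{S}$ together with a tail estimate for the singular integral $\mathfrak{J}$.

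First I would substitute $\underline{x} = \underline{r} + q\underline{y}$ in $S_{\ua}(q,\uz)$, with $\underline{r}$ running over residues modulo $q$ and $\underline{y} \in \ZZ^n$. Since $F$ and $G$ have integer coefficients, the factor $e_q(a_1 F(\underline{r}+q\underline{y})+a_2 G(\underline{r}+q\underline{y}))$ collapses to $e_q(a_1 F(\underline{r})+a_2 G(\underline{r}))$ independently of $\underline{y}$, so Poisson summation in $\underline{y}$ yields
\[
S_{\ua}(q,\uz) = q^{-n}\sum_{\underline{k}\in \ZZ^n} S_{\ua,\underline{k}}(q)\, I(\underline{k}, q, \uz),
\]
with $S_{\ua,\underline{k}}(q):=\sum_{\underline{r}\bmod q} e_q(a_1 F(\underline{r})+a_2 G(\underline{r})+\underline{k}\cdot\underline{r})$ and $I(\underline{k}, q, \uz):=\int_{\R^n}\omega(\underline{x}/P)\, e(z_1 F(\underline{x})+z_2 G(\underline{x})-\underline{k}\cdot\underline{x}/q)\, d\underline{x}$. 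On $\scrM$ we have $q\leq P^{\Delta}$ and $|\uz|\leq P^{-3+\Delta}$, so on the support of $\omega$ the gradient of $z_1 F+z_2 G$ is $O(P^{-1+\Delta})$, whereas $|\underline{k}|/q\gg P^{-\Delta}$ for any $\underline{k}\neq \vecnull$. Provided $\Delta<1/2$ the frequency $\underline{k}/q$ dominates the total phase; repeated integration by parts then gives $I(\underline{k},q,\uz)\ll_N P^n(|\underline{k}|P^{-\Delta})^{-N}$ for arbitrary $N$. Combined with the trivial bound $|S_{\ua,\underline{k}}(q)|\leq q^n$, this shows that the $\underline{k}\neq \vecnull$ contribution to $S_\scrM$ is $O_N(P^{-N})$ for any $N$.

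Retaining only $\underline{k}=\vecnull$ and rescaling via $\underline{x}=P\underline{u}$ and $\uz=P^{-3}\utau$, the major arcs contribution separates as
\[
S_{\scrM}= P^{n-6}\,\mathfrak{S}(P^{\Delta})\,\mathfrak{J}(P^{\Delta})+O_N(P^{-N}).
\]
The assumed identity $\mathfrak{S}(P^{\Delta})=\mathfrak{S}+O_\phi(P^{-\phi\Delta})$ handles the completion of the singular series, leaving only a tail estimate of the form $\mathfrak{J}(R)=\mathfrak{J}+O(R^{-\phi'})$ with $\phi'>0$ to establish. Integration in polar coordinates on $\R^2$ reduces this in turn to a uniform oscillatory-integral bound $|\int\omega(\underline{u})\,e(\tau_1 F(\underline{u})+\tau_2 G(\underline{u}))\,d\underline{u}| \ll (1+|\utau|)^{-\kappa}$ for some $\kappa>2$.

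The main obstacle is establishing this oscillatory-integral bound with $\kappa$ comfortably exceeding $2$. I would follow a Birch-type stationary phase argument: writing $H:=\tau_1 F+\tau_2 G$ as a single cubic form, the decay is governed by the codimension of the set where $\nabla H$ is small, which by Lemma \ref{P: background on a pair of quadrics: sigma'<= sigma+1} is at least $n-\sigma(F,G)-1$. Repeated integration by parts in non-critical directions then delivers an exponent proportional to $n-\sigma(F,G)$, and the hypothesis $n-\sigma(F,G)\geq 34$ is precisely what guarantees $\kappa>2$ with a positive margin, so $\mathfrak{J}(R)-\mathfrak{J}\ll R^{-\phi'}$ for some $\phi'>0$. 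Combining all of the above and choosing $\Delta\in(0,1/7)$ small enough that $\min(\phi,\phi')\Delta\geq\delta$ for a suitable $\delta>0$ yields the claimed asymptotic $S_\scrM=\mathfrak{S}\mathfrak{J}P^{n-6}+O_\phi(P^{n-6-\delta})$.
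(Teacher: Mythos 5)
Your overall skeleton (separate the main term $P^{n-6}\mathfrak{S}(P^{\Delta})\mathfrak{J}(P^{\Delta})$, complete the singular series using the hypothesis, then complete the singular integral) matches the paper's, but your route to the first step is genuinely different: the paper does not use Poisson summation on the major arcs. It simply writes $\x=\uu+q\v$ and compares the $\v$-sum with the corresponding integral, obtaining $S(\ualf)=q^{-n}P^{n}S_{\ua,q}I(\uz P^3)+O(P^{n-1}q+|\uz|P^{n+2}q)$; summing this error over the major arcs costs $O(P^{n-7+7\Delta})$, which is exactly why the hypothesis $\Delta<1/7$ appears. Your Poisson variant, which kills all nonzero frequencies by non-stationary phase, would give a negligible error and only needs $\Delta<1/2$, so it is if anything stronger — though note your stated bound $I(\underline{k},q,\uz)\ll_N P^{n}(|\underline{k}|P^{-\Delta})^{-N}$ cannot be right as written (for $|\underline{k}|=1$ it is worse than trivial); the rescaling $\x=P\uu$ contributes a factor $P$ to the phase gradient and the correct bound is $\ll_N P^{n}(|\underline{k}|P^{1-\Delta})^{-N}$, which does the job.

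The genuine gap is in your treatment of the singular integral tail. You reduce everything to a bound $|I(\utau)|\ll(1+|\utau|)^{-\kappa}$ with $\kappa>2$ and propose to obtain it by ``repeated integration by parts in non-critical directions,'' asserting that this gives an exponent ``proportional to $n-\sigma(F,G)$'' and that $n-\sigma\geq 34$ is ``precisely'' what makes $\kappa>2$. That last claim only makes sense if you know the constant of proportionality, and you have not derived it; a direct stationary-phase analysis of the cubic phase $\tau_1F+\tau_2G$ near its (possibly large) critical locus is delicate and is not how the paper proceeds. The paper's Lemma \ref{T1240} instead obtains $I(\underline{t})\ll\min\{1,|\underline{t}|^{(\sigma+1-n)/16+\ve}\}$ by a transference argument: apply the Weyl-differencing bound of Proposition \ref{T815} to $S(\ualf)$ with $\ua=\un{0}$, $q=1$, combine it with the approximation $S(\ualf)=P^{n}I(\ualf P^{3})+O((1+|\ualf|P^{3})P^{n-1})$, set $\underline{t}=\ualf P^{3}$, and then optimise over the free parameter $P$ (choosing $P$ as a suitable power of $|\underline{t}|$), exploiting that $I(\underline{t})$ does not depend on $P$. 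This is what pins down $\kappa=(n-\sigma-1)/16$, and hence why $n-\sigma\geq 34$ gives $\kappa\geq 33/16>2$ and $\mathfrak{J}-\mathfrak{J}(R)\ll R^{(33+\sigma-n)/16+\ve}$. Without this step (or an equally quantitative substitute), your plan does not establish the convergence of $\mathfrak{J}$ or the power saving in $P$, so you should supply it.
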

The proof of this lemma, along with the proof of convergence of the singular series will be established in Section \ref{MA}.

The majority of our effort will be spent in bounding the minor arcs contribution. In order to state the Proposition we aim to prove for the minor arcs, we need to further specify our choice of weight function and the point which it will centred on. Let $\x_0$ be a fixed point satisfying $|\x_0|<1$ and
\begin{equation}
\label{E12}
\rank \begin{pmatrix}
\nabla  F(\underline{x}_0)\\
\nabla  G(\underline{x}_0)
\end{pmatrix}=2.
\end{equation}
Without loss of generality, we may assume that
\begin{equation}\label{eq:cross}
|\nabla  F(\x_0)\cdot \nabla  G(\x_0)|\leq C'\|\nabla  F(\x_0)\|_{\L^2}\|\nabla  G(\x_0)\|_{\L^2},
\end{equation}
for some $0<C'<1$, possibly depending on $\x_0$.
We will also slightly expand our definition of the test function $\omega$ to assume it to be supported in a box $\x_0+(-\rho,\rho)^n$, for a small parameter $\rho>0$ to be chosen in due course. 
 Moreover, we ask that the following bound to be true on its derivatives:
\begin{equation}
\label{eq:omegaderi}
\max\Bigg{\{}\Bigg{|}\frac{\partial^{j_1+\cdots + j_n}}{\partial x_1^{j_1}\cdots \partial x_n^{j_n}} \omega(\underline{x})\Bigg{|} \mid \underline{x}\in\mathbb{R}, j_1+\cdots +j_n=j\Bigg{\}}\ll_{j,n} 1,
\end{equation}
for every $j\geq 0$. A satisfactory bound for the minor arcs will be produced by the following proposition, which we aim to prove:

\begin{proposition}
\label{P02}
Let $F,G$ be a system of two cubic forms with a smooth intersection satisfying $n\geq 39$, and let $\omega\in \C^\infty_c(\x_0+(-\rho,\rho)^n)$ satisfy \eqref{eq:omegaderi}, where $\x_0$ satisfies \eqref{eq:cross}. Then there exists some $\delta=\delta(\Delta)>0$ and some $\rho_0>0$, such that for any $0<\Delta< 1/7$ and for any $0<\rho<\rho_0$, we have
\[S_{\mathfrak{m}}=O_{n,\rho,\Delta, ||F||,||G||}(P^{n-6-\delta}).\] 
Here, given a polynomial $F$, let $\|F\|$ denote the maximum of all its coefficients.
\end{proposition}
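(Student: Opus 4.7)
My plan is to bound each of the two pieces of \eqref{E03} through a four-step pipeline: Cauchy-Schwarz to pass to second moments, a two-dimensional averaged van der Corput differencing, pointwise Poisson summation on the resulting quadratic sums, and a Kloosterman-refined average over the numerators $\ua$. I would start by applying Cauchy-Schwarz inside the integrals in \eqref{E03}, using
$$\Bigl(\starsum_{\ua\bmod q}|S_\ua(q,\uz)|\Bigr)^{\!2}\leq \phi(q)\starsum_{\ua\bmod q}|S_\ua(q,\uz)|^2,$$
so that matters reduce to bounding the integrated second moments $\int \starsum_\ua |S_\ua(q,\uz)|^2\,d\uz$ on the two ranges of \eqref{eq:minorar}. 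The extra $\ua$-average appearing here is precisely what drives the Kloosterman refinement downstream.

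Next I would open the square $|S_\ua(q,\uz)|^2$ and apply a two-dimensional averaged van der Corput process, generalising the one-dimensional versions of Hanselmann, Heath-Brown, and Marmon-Vishe. Averaging over auxiliary shift vectors of sizes $H_1,H_2$ in two independent directions and then Weyl-differencing reduces the problem to exponential sums built from the quadratic differenced forms $F_\uh,G_\uh$ in \eqref{eq:fhghdef}. The smallness of $\rho$ together with the non-degeneracy condition \eqref{eq:cross} at $\x_0$ is what keeps the auxiliary shifts non-trivial inside the support of $\omega$. The introduction already flags that pointwise Poisson after averaged van der Corput saves nothing extra, so the two-dimensional (rather than iterated one-dimensional) averaging is the essential innovation, and is what should push the threshold from $n\geq 43$ down to $n\geq 39$.

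I would then apply pointwise Poisson summation in $\x$ to the resulting quadratic sum. This produces a dual sum controlled by rank properties of the pencil $a_1 M_1(\uh)+a_2M_2(\uh)$, where $M_i(\uh)$ denotes the symmetric matrix of $F_\uh$ and $G_\uh$. Carrying out the $\ua$-average and splitting $q$ into squarefree and squarefull parts via Lemma \ref{T1}, the squarefree case is handled by Lemma \ref{T20}, which is exactly the analytic packaging of the Kloosterman refinement: Proposition \ref{E22} gives $\rank\geq n-s_p-1$ outside $O(p)$ exceptional lines $\ua\bmod p$, so on average one gains a factor of $p$ over the pointwise rank bound $n-s_p-2$. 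For the squarefull part, the pointwise rank bound together with Lemma \ref{lem:N_b=y_b+ Null} and Proposition \ref{E22} is what one uses, the latter converting the shifted null-set count to $\Null_q(M(\ua))$. Lemma \ref{P: background on a pair of quadrics: sigma'<= sigma+1} controls the Birch singularity of $F_\uh$ and $G_\uh$ in terms of $\sigma(F,G)$, which is $-1$ under our smoothness assumption on $X$.

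Finally I would substitute these bounds back into \eqref{E03}, integrate over the $\uz$-boxes, and optimise the free parameters $H_1,H_2,\Delta$ against $Q=\lfloor P^{3/2}\rfloor$. The main obstacle I expect is exactly this parameter optimisation. The $H_i$ need to be simultaneously large enough that the differenced forms $F_\uh,G_\uh$ genuinely have lower degree than the cubic original, small enough that near-diagonal $\uh$ for which the pencil $(M_1(\uh),M_2(\uh))$ degenerates cannot dominate, and balanced against the $O(q^{1/2})$ Kloosterman saving and the Poisson dual-lattice count so that the total just barely closes when $n=39$. A secondary but routine annoyance will be the bookkeeping for the two $\uz$-regions of \eqref{eq:minorar}, since the trade-off between $|\uz|$ and the size of $q$ must be tracked separately on each of them.
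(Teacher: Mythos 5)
Your overall architecture---Cauchy--Schwarz, two-dimensional averaged van der Corput, Poisson summation on the quadratic sums, and a Kloosterman-style average over $\ua$ split into square-free and square-full parts---matches the paper's main bound $B_{AV/P}$, and you correctly identify Lemma~\ref{T20} as the analytic core of the Kloosterman refinement, via Proposition~\ref{E22}. However, there is a genuine structural gap: a single Poisson-based pipeline of this kind \emph{cannot} close the argument at $n=39$. The paper must deploy \emph{five} competing estimates for the dyadic pieces $D_P(R,t,\uR)$, namely averaged van der Corput followed by Poisson ($B_{AV/P}$), pointwise van der Corput followed by Poisson ($B_{PV/P}$), averaged van der Corput followed by a single Weyl step ($B_{AV/W}$, Proposition~\ref{T814}), pointwise van der Corput followed by Weyl ($B_{PV/W}$), and a double Weyl bound ($B_{Weyl}$, Proposition~\ref{T815}). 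The Poisson/Kloosterman route is the strongest only in the ``generic'' corner $R\asymp Q$ with $R$ square-free and $t$ near the top of its range; it is ineffective elsewhere. For instance, the Kloosterman saving of $O(q^{1/2})$ over the $\ua$-average is only realised on the square-free part of $q$, so when $q$ is heavily square-full the Poisson bound is weak and one needs the Weyl-type bounds, which in turn require Lemma~\ref{P: background on a pair of quadrics: sigma'<= sigma+1} to pass from the Birch singular locus $\sigma'$ to $\sigma$. The Weyl bounds are also essential for small $t$. The paper then has to compare the minimum of all five piecewise-linear bounds over the full convex polytope of $(\phi,\tau,\phi_3,\phi_4)$, a task carried out numerically by a Min-Max routine in Appendix~A precisely because the manual bookkeeping is intractable. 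Your ``main obstacle is parameter optimisation'' remark underestimates this: the optimisation is over a menu of five bounds, not over the free parameters of a single one.

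Two further points are missing or imprecise. First, the Poisson bound (Proposition~\ref{T76}) relies crucially on the hyperplane-intersection step of Section~\ref{hsum}, using Lemma~\ref{lem:hyperplane} to slice the exponential sums and Lemma~\ref{T71} to count the $\uh$ for which the differenced pair $(F_\uh^{(0)},G_\uh^{(0)})$ is singular of a given dimension; without this, the factor $D(q)$ coming from Proposition~\ref{P:sq free:n>1} cannot be balanced against the $\uh$-sum. Second, your attribution of the saving is off: the two-dimensional averaged van der Corput followed by Weyl only gives $n\geq 43$ (as stated in the introduction), and averaged van der Corput followed by pointwise Poisson does \emph{not} improve on this. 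It is specifically the combination of the Kloosterman-refined $\ua$-average with Poisson summation on the differenced quadratic sums (the content of Propositions~\ref{P:sq free:n>1} and~\ref{T600}, packaged as Lemma~\ref{lem:V sum bound n>1}) that buys the last four variables. Finally, a minor point: the paper does not use two independent shift parameters $H_1,H_2$. The set $\mathcal{H}$ in \eqref{E49} is a box with sides $c_1P,c_2P$ in the directions of $\e_1',\e_2'$ and side $H$ in the remaining $n-2$ directions; it is this asymmetric box, adapted to $\nabla F(\x_0)$ and $\nabla G(\x_0)$, that makes the differencing genuinely two-dimensional, not a pair of scalar parameters.
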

A major part of the rest of this work will be dedicated to proving Proposition \ref{P02}, which will ultimately be achieved in Section \ref{sec:minor}.
Before we move on, it will be desirable to obtain a consequence of our choice of $\omega$ and $\x_0$, akin to the conditions \cite[(2.15)-(2.16)]{Marmon_Vishe}. This will be our aim in Lemma \ref{lem:restatement} below, which will be useful in setting up a two dimensional van der Corput differencing argument in Section \ref{vdc} and in particular, in the proof of Lemma \ref{T42}. We choose the vectors $\e_1'$ and $\e_2'$ to be a basis for the span of the two dimensional vector space $\{\nabla F(\x_0),\nabla G(\x_0)\}$, chosen in the following way:
\begin{equation}
\label{eq:e1'e2'}
\underline{e}_1':=\frac{\nabla F(\underline{x}_0)}{||\nabla F(\underline{x}_0)||},\quad \underline{e}_2':=\frac{\nabla G(\underline{x}_0)-\gamma\e_1'}{\gamma_1},
\end{equation}
where $\gamma=\nabla G(\underline{x}_0)\cdot \e_1'$,  and $\gamma_1=\|\nabla G(\underline{x}_0)-\gamma\e_1'\|$
 is a non-zero constant by \eqref{eq:cross}. $\e_2'$ is chosen so that the Gram-Schmidt procedure works.
\begin{lemma}\label{lem:restatement}
Let $F$ and $G$ be cubic forms and $\omega$ be a compactly supported function supported in $\x_0+(-\rho,\rho)^n$ satisfying \eqref{eq:omegaderi}, where $\x_0$ satisfies \eqref{eq:cross}. Then there exist constants $M_1,M_2>0$ and  there exists some $0<\rho_0\leq 1$ such that if $\rho\leq \rho_0$ then
\begin{equation}
\label{E14}
    \min_{\underline{x}\in \supp(P\omega)}|\nabla F(\underline{x})\cdot \e_1'|\geq M_1P^2, \quad \min_{\underline{x}\in \supp(P\omega)}|\nabla G(\underline{x})\cdot \e_2'|\geq M_1P^2,
\end{equation}
\begin{equation}
\label{E15}
\begin{split}
\max_{\x\in\supp (P\omega)}\{|\nabla F(\x)\cdot \e_2'|
\}\leq \rho M_2P^2, \quad \max_{\x\in\supp (P\omega)}\{|\nabla G(\x)\cdot \e_1'|\}\leq M_2P^2.
\end{split}
\end{equation}
Furthermore, $M_1$ and $M_2$ depend only on $F$, $G$, and our choice of $\x_0$ (in particular $M_1$ and $M_2$ do not depend on $\rho$).
\end{lemma}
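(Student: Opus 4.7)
The plan is to reduce the four inequalities to estimates at scale $1$ using homogeneity, then prove the reduced inequalities by continuity/Taylor expansion at the base point $\x_0$. Since $F, G$ are cubic, the gradients $\nabla F$ and $\nabla G$ are homogeneous polynomials of degree $2$. Writing $\y = \x/P$, the condition $\x \in \supp(P\omega)$ is equivalent to $\y \in \x_0 + (-\rho,\rho)^n$, and $\nabla F(\x) = P^2 \nabla F(\y)$, $\nabla G(\x) = P^2 \nabla G(\y)$. So both sides of each of \eqref{E14} and \eqref{E15} scale by $P^2$, and the four claims reduce respectively to
\[ |\nabla F(\y)\cdot \e_1'|\geq M_1,\quad |\nabla G(\y)\cdot \e_2'|\geq M_1,\quad |\nabla F(\y)\cdot \e_2'|\leq \rho M_2,\quad |\nabla G(\y)\cdot \e_1'|\leq M_2,\]
uniformly for $\y \in \x_0 + (-\rho,\rho)^n$.

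Next I would record the values at the centre point $\y=\x_0$. By the definition \eqref{eq:e1'e2'} of $\e_1'$ and $\e_2'$, the Gram--Schmidt construction gives the identities
\[\nabla F(\x_0)\cdot \e_1'=\|\nabla F(\x_0)\|=:A,\qquad \nabla F(\x_0)\cdot \e_2'=0,\]
\[\nabla G(\x_0)\cdot \e_2'=\gamma_1,\qquad \nabla G(\x_0)\cdot \e_1'=\gamma,\]
where $A>0$ because $\x_0$ is a non-singular point, and $\gamma_1>0$ because $\nabla F(\x_0)$ and $\nabla G(\x_0)$ are linearly independent by \eqref{E12} (with \eqref{eq:cross} ensuring the Gram--Schmidt step is well defined and $\gamma_1$ is bounded below by a positive constant depending only on $\x_0$). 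The crucial point is that $\e_2'$ is constructed precisely so as to make $\nabla F(\x_0)\cdot\e_2'=0$, which is what will yield the small factor $\rho$ in the bound \eqref{E15}.

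Finally I would invoke continuity. Because $\nabla F$ and $\nabla G$ are quadratic polynomials, they are Lipschitz on the compact region $\x_0 + [-1,1]^n$ with Lipschitz constants $L_F, L_G$ depending only on the coefficients of $F,G$ and on $\x_0$. For any $\y \in \x_0 + (-\rho,\rho)^n$ with $\rho\leq 1$, we have $|\y-\x_0|\leq \rho\sqrt{n}$, so
\[|\nabla F(\y)\cdot \e_1'-A|\leq L_F\sqrt{n}\,\rho,\qquad |\nabla G(\y)\cdot \e_2'-\gamma_1|\leq L_G\sqrt{n}\,\rho,\]
\[|\nabla F(\y)\cdot \e_2'|\leq L_F\sqrt{n}\,\rho,\qquad |\nabla G(\y)\cdot \e_1'|\leq |\gamma|+L_G\sqrt{n}\,\rho.\]
Choose $\rho_0\in (0,1]$ small enough (depending on $A, \gamma_1, L_F, L_G, n$, hence only on $F,G,\x_0$) so that $L_F\sqrt{n}\,\rho_0\leq A/2$ and $L_G\sqrt{n}\,\rho_0\leq \gamma_1/2$. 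Then setting $M_1:=\tfrac12\min\{A,\gamma_1\}$ and $M_2:=\max\{L_F\sqrt{n},\ |\gamma|+L_G\sqrt{n}\}$ yields all four inequalities, with $M_1$ and $M_2$ depending only on $F$, $G$ and $\x_0$ (not on $\rho$), as required.

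There is no real obstacle here: the argument is a short continuity/Lipschitz computation around $\x_0$, made possible by the orthogonality built into the Gram--Schmidt choice \eqref{eq:e1'e2'}. The only mild book-keeping point is to verify that the constants $M_1,M_2$ extracted in the last step genuinely do not depend on $\rho$, which is automatic once $\rho_0$ has been fixed.
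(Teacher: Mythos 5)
Your proof is correct and follows essentially the same route as the paper: record the values of $\nabla F\cdot\e_i'$ and $\nabla G\cdot\e_i'$ at the base point $\x_0$ (where the Gram--Schmidt construction gives $\nabla F(\x_0)\cdot\e_2'=0$ and $\nabla G(\x_0)\cdot\e_2'=\gamma_1>0$) and then propagate to the whole support by a mean-value/Lipschitz estimate, choosing $\rho_0$ small to absorb the error. Your version is a little cleaner than the paper's for the two $G$-inequalities, using the identity $\nabla G(\x_0)\cdot\e_2'=\gamma_1$ directly rather than the paper's slightly longer computation through the constants $C'$ and $C''$ coming from \eqref{eq:cross}, but the underlying idea is the same.
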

\begin{proof}
A key in the proof here will be the following bound, which is an easy consequence of the Mean Value Theorem: Given any $\x\in\supp(P\omega)$, we have
\begin{equation}
\label{eq:MVT}
\begin{split}
\|\nabla F(\x)-\nabla F(P\x_0)\|\ll_{||F||} \rho P^2 \textrm{ and } \|\nabla G(\x)-\nabla G(P\x_0)\|\ll_{||G||} \rho P^2.
\end{split}
\end{equation}
Let us first prove that the conditions for $\nabla F(\x)$ in \eqref{E14} - \eqref{E15} are met. The key here are the conditions \eqref{E12} and \eqref{eq:cross}.  Clearly, using \eqref{eq:MVT} we have
\begin{align*}
\nabla F(\underline{x})\cdot \e_1'&=(\nabla F(\x)-\nabla F(P\x_0))\cdot \e_1'+\nabla F(P\x_0)\cdot \e_1'\\
&=(\nabla F(\x)-\nabla F(P\x_0))\cdot \e_1'+P^2\nabla F(\x_0)\cdot \nabla F(\x_0)/\|\nabla F(\x_0)\|\\
&=(\nabla F(\x)-\nabla F(P\x_0))\cdot \e_1'+P^2\|\nabla F(\x_0)\|\\
&\geq (1-O(\rho))P^2\|\nabla F(\x_0)\|\\
&\geq M_{F,1} P^2
\end{align*}
for some $M_{F,1}>0$ which is independent of $\rho$, provided that $\rho$ is chosen to be small enough. Similarly, we may also assure that
\begin{equation}\label{eq:g0bound}
|\nabla G(\underline{x})\cdot \nabla  G(\x_0)|
\geq (1-O(\rho))P^2\|\nabla G(\x_0)\|^2.
\end{equation}
In both of these equations, the implied constants only depend on $\| F\|,|| G||$ and $n$. This will be a feature of all implied constants appearing in this proof. On the other hand, since $\nabla F(\x_0)=\|\nabla F(\x_0)\|\,\e_1'$ is orthogonal to $\un{e}_2'$, we have
\begin{align}
\label{eq:2}
|\nabla F(\x)\cdot \e_2'|=|(\nabla F(\x)-P^2\nabla F(\x_0))\cdot \e_2'|\leq \|(\nabla F(\x)-\nabla F(P\x_0))\|\ll_{\|F\|} \rho P^2
\end{align}
by \eqref{eq:MVT}. In other words, there is some $M_{F,2}>0$ independent of $\rho$ such that 
\[|\nabla F(\x)\cdot \e_2'|\leq M_{F,2} \,\rho P^2.\]
To deal with the inequalities concerning $G$, we use \eqref{eq:cross}, which hands us a constant $0<C'<1$ satisfying
\begin{align}\label{eq:gammabound}
\gamma\|\nabla F(\x_0)\|=|\nabla F(\x_0)\cdot \nabla G(\x_0)|&\leq C' \|\nabla F(\x_0)\|_{L_2}\|\nabla  G(\x_0)\|_{L_2}\nonumber\\
                                                             &\leq C' \|\nabla F(\x_0)\|\|\nabla  G(\x_0)\|.
\end{align}
Therefore, for any $\x\in\supp(P\omega)$, by \eqref{eq:MVT} and \eqref{eq:gammabound}, we have that
\begin{equation*}
\begin{split}
|\nabla F(\x_0)\cdot \nabla G(\x)|&\leq |\nabla F(\x_0)\cdot \nabla G(P\x_0)|+ |\nabla  F(\x_0)\cdot (\nabla  G(\x)-\nabla  G(P\x_0))|  \\
&\leq C' P^2\|\nabla G(\x_0)\|\|\nabla  F(\x_0)\|+O_{\|G\|}(\rho) P^2\|\nabla F(\x_0)\|
\end{split}
\end{equation*}
Hence (since $\|\nabla G(\x_0)\|>0$ is a constant), provided that the support $\rho $ is sufficiently small, we may choose some $0<C''<1$ independent of $\rho$ such that 
\begin{align}\label{eq:gammabound1}
|\nabla F(\x_0)\cdot \nabla G(\x)|\leq C''P^2\|\nabla F(\x_0)\|\|\nabla  G(\x_0)\|.
\end{align}

Thus, for any $\x\in\supp(P\omega)$,
\begin{equation*}
\begin{split}
|\nabla  G(\x)\cdot(\nabla G(\x_0)-\gamma \e_1')|&=|\nabla G(\x)\cdot \nabla G(\x_0)- \gamma\|\nabla  F(\x_0)\|^{-1}\nabla G(\x)\cdot \nabla F(\x_0)|\\
&\geq (1-O(\rho)-C'C'')P^2\|\nabla  G(\x_0)\|^2,
\end{split}
\end{equation*}
where we have used \eqref{eq:gammabound} to bound $\gamma$ by $C'\|\nabla G(\x_0)\|$, as well as \eqref{eq:gammabound1} and \eqref{eq:g0bound}. Hence provided that the support $\rho $ is chosen to be sufficiently small, there is some $M_{G,1}>0$ such that
\[|\nabla  G(\x)\cdot \e_2'|=\gamma_1^{-1}|\nabla  G(\x)\cdot(\nabla G(\x_0)-\gamma \e_1')|\geq M_{G,1} P^2.\]
Hence, upon taking 
\[M_1:=\min\{M_{F,1}, M_{G,1}\},\]
we conclude that \eqref{E14} is true. Finally, \eqref{eq:gammabound1} also hands us:
\begin{equation}
|\nabla  G(\x)\cdot \e_1'|=\|\nabla F(\x_0)\|^{-1}|\nabla F(\x_0)\cdot \nabla G(\x)|\leq C'' P^2\| G (\x_0)\|,
\end{equation}
for any $\x\in\supp(P\omega)$. Therefore, upon setting $M_{2,G}:=C''\| G (\x_0)\|$, and taking
\[M_2:=\max\{M_{F,2}, M_{G,2}\},\]
we are now able to verify \eqref{E15}. Furthermore, there is some $\rho_0>1$, such that $M_1$ and $M_2$ are independent of $\rho$ provided that $\rho\leq \rho_0$. This concludes the proof of the lemma. 
\end{proof}

\section{Van der Corput differencing}
\label{vdc}
In this Section, we will use van der Corput differencing to bound $S_{\ua}(q,\z)$  by a quadratic exponential sum. We will introduce the topic by beginning with the simpler \ti{pointwise} van der Corput differencing before attempting to generalise the differencing arguments used in \cite{Vishe19} to attain a bound which also takes advantage of averaging over the both $\z$ integrals. In both cases, we will innovate on the standard differencing approach in order to introduce a path to attaining Kloosterman refinement.

\subsection{Pointwise van der Corput}
\label{S: pvdc}
For convenience, we will set 
\begin{equation}
\label{eq:hatfdef}
\hat{F}_{\underline{a},q,\z}(\underline{x}):=(a_1/q+z_1)F(\underline{x})+(a_2/q+z_2)G(\underline{x}),
\end{equation}
where $F$ and $G$ are cubic forms. Since $\underline{x}$ is summed over all of $\mathbb{Z}^n$, we can replace $\underline{x}$ with $\underline{x}+\underline{h}$, for any $\underline{h}\in\mathbb{Z}^n$, giving 
\begin{equation}
    \label{eq: van der corput: 1}
    S(q,\underline{z})=\sum_{\ua}{}^*\Big{|}\sum_{\underline{x}\in\mathbb{Z}^n}\omega((\underline{x}+\underline{h})/P)e(\hat{F}_{\underline{a},q,\z}(\underline{x}+\underline{h}))\Big{|},
\end{equation}
where $S(q,\z)$ is as defined in \eqref{eq:sqdef}. Let $\mathcal{H}\subset \mathbb{Z}^n$ be a set of lattice points (which we may choose freely). In the case of pointwise van der Corput differencing, we can just take $\mathcal{H}$ to be the set of lattice points $\h$ such that $|\h|<H$, for some $1\leq H\ll P$ which we may choose freely. However, we will not specify this in the arguments that follow since we will need a different choice of $\mathcal{H}$ when we come to averaged van der Corput differencing later. Applying the Cauchy-Schwarz inequality to \eqref{eq: van der corput: 1} gives the following
\begin{align*}
  \#\mathcal{H}S(q,\underline{z})&= \sum_{\ua}{}^*\Big{|}\sum_{\underline{h}\in\mathcal{H}}\sum_{\underline{x}\in\mathbb{Z}^n} \omega((\underline{x}+\underline{h})/P)e(\hat{F}_{\underline{a},q,\z}(\underline{x}+\underline{h}))\Big{|}\\
                                               &\leq \sum_{\ua}{}^*\sum_{\underline{x}\in\mathbb{Z}^n} \Big{|}\sum_{\underline{h}\in\mathcal{H}}\omega((\underline{x}+\underline{h})/P)e(\hat{F}_{\underline{a},q,\z}(\underline{x}+\underline{h}))\Big{|}\\
                                               &\leq \Big{(} \sum_{\ua}{}^*\sum_{|\underline{x}|<2P} 1 \Big{)}^{1/2} \Big{(}\sum_{\ua}{}^*\sum_{\underline{x}\in\mathbb{Z}^n}\Big{|}\sum_{\underline{h}\in\mathcal{H}} \omega((\underline{x}+\underline{h})/P)e(\hat{F}_{\underline{a},q,\z}(\underline{x}+\underline{h}))\Big{|}^2\Big{)}^{1/2}\\
                                               &\ll  q P^{n/2} \Big{(}\sum_{\ua}{}^*\sum_{\underline{x}\in\mathbb{Z}^n} \sum_{\underline{h}_1,\underline{h}_2\in\mathcal{H}} \omega((\underline{x}+\underline{h}_1)/P)\overline{\omega((\underline{x}+\underline{h}_2)/P)}\\ 
                                               &\qquad\qquad\qquad\qquad\qquad\qquad\qquad\;\;\;\; e(\hat{F}_{\underline{a},q,\z}(\underline{x}+\underline{h}_1)) \overline{e(\hat{F}_{\underline{a},q,\z}(\underline{x}+\underline{h}_2))}\Big{)}^{1/2}.
\end{align*}
The key difference between this and the standard van der Corput differencing process is the introduction of the $\ua$ sum in the Cauchy-Schwarz step. In particular, this enables us to bring the $\ua$ sum inside of the bracket in the final step which in turn gives us a path to Kloosterman refinement. We still need to write $S(q,\z)$ in terms of a quadratic exponential sum however, so we will come back to Kloosterman refinement later.\\
Set $\underline{y}:=\underline{x}+\underline{h}_2$, $\underline{h}=\underline{h}_1-\underline{h}_2$ and recall that we defined $\omega$ to be a real weight function. Therefore, after setting 
\begin{equation}
\label{eq:Nhwomegahdef}
N(\underline{h}):=\#\{\underline{h}_2-\underline{h}_1=\underline{h}:\underline{h}_1,\underline{h}_2\in\mathcal{H}\},\textrm{  and  }\omega_{\underline{h}}(\underline{x}):=\omega(\underline{x}+P^{-1}\underline{h})\omega(\underline{x}),
\end{equation}
 we get
\[|S(q,\underline{z})|^2\ll \#\mathcal{H}^{-2}q^2P^{n}\sum_{\ua}{}^*\sum_{\underline{y}\in\mathbb{Z}^n} \sum_{\underline{h}\in\mathcal{H}} N(\underline{h})\omega_{\underline{h}}(\underline{y}/P) e(\hat{F}_{\underline{a},q,\z}(\underline{y}+\underline{h})-\hat{F}_{\underline{a},q,\z}(\underline{y})).\]
Recall that $\hat{F}_{\underline{a},q,\z}(\underline{x})=(a_1/q+z_1)F(\underline{x})+(a_2/q+z_2)G(\underline{x})$. Therefore if we set $F_{\h}$ and $G_{\h}$ be the differenced polynomials
\begin{equation*}
F_{\underline{h}}(\underline{y}):=F(\underline{y}+\underline{h})-F(\underline{y}),\quad G_{\underline{h}}(\underline{y}):=G(\underline{y}+\underline{h})-G(\underline{y}),
\end{equation*}
we have
\[\hat{F}_{\underline{a},q,\z}(\underline{y}+\underline{h})-\hat{F}_{\underline{a},q,\z}(\underline{y})=(a_1/q+z_1)F_{\h}(\underline{y})+(a_2/q+z_2)G_{\h}(\underline{y}).\]
Hence
\begin{equation}
    \label{E430}
|S(q,\underline{z})|^2\ll \#\mathcal{H}^{-2}P^{n}q^2 \sum_{\underline{h}\in \mathcal{H}} N(\underline{h})T_{\underline{h}}(q,\underline{z}),
\end{equation}
where
\begin{equation}
    \label{eq:Thmaindef0}
    T_{\underline{h}}(q,\underline{z}):= \starsum_{\ua\bmod{q}}\sum_{\underline{y}\in\mathbb{Z}^n} \omega_{\underline{h}}(\underline{y}/P)e((a_1/q+z_1)F_{\h}(\underline{y})+(a_2/q+z_2)G_{\h}(\underline{y}))
\end{equation}
denote the corresponding exponential sum for the system of quadratic polynomials $F_{\h}$ and $G_{\h}$. Note that the top form of $F_{\h}$, $F_{\h}^{(0)}$, is precisely \eqref{eq:fhghdef}. Finally, by noting that $N(\h)\leq \#\mathcal{H}=H^n$, we arrive at the following:
\begin{lemma}
\label{T46}
For any $1\leq H\ll P$, for any fixed choice of $\uz\in[0,1]^2$, we have
\[|S(q,\z)|\ll H^{-n/2}P^{n/2}q\Bo\sum_{\underline{h}\ll H}|T_{\underline{h}}(q,\underline{z})|\Bc^{1/2}.\]
\end{lemma}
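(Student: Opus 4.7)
The lemma is essentially the culmination of the van der Corput differencing computation carried out in the paragraphs preceding its statement, specialised to the choice $\mathcal{H}=\{\h\in\Z^n:|\h|<H\}$. The plan is therefore to follow the three-step template \emph{shift-average, Cauchy-Schwarz, expand-the-square}, and then specialise.

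First I would insert an averaging over shifts. Since the inner sum in $S_{\ua}(q,\uz)$ runs over all of $\Z^n$, the translation $\x\mapsto\x+\h$ leaves it unchanged for every $\h\in\Z^n$. Averaging over $\mathcal{H}$ and summing over primitive $\ua\bmod q$ gives
\[\#\mathcal{H}\cdot S(q,\uz)\leq \sum_\ua{}^*\sum_{\x\in\Z^n}\Bo\sum_{\h\in\mathcal{H}}\omega((\x+\h)/P)e(\hat F_{\ua,q,\uz}(\x+\h))\Bc.\]
Next I would apply Cauchy-Schwarz on the double sum over $(\ua,\x)$, noting that only $|\x|\ll P$ contributes (since $|\h|\ll H\ll P$ and $\omega$ is compactly supported). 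The first factor contributes $\ll (q^2 P^n)^{1/2}=qP^{n/2}$, and squaring/opening the inner sum produces a diagonal pair $(\h_1,\h_2)\in\mathcal{H}^2$.

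The crucial algebraic input is that $F,G$ are cubic, so the substitution $\y=\x+\h_2$, $\h=\h_1-\h_2$ converts $F(\y+\h)-F(\y)$ into the quadratic form $F_\h(\y)$ (and likewise $G_\h$ for $G$), while the multiplicity of each difference $\h$ is exactly $N(\h)$. The $\ua$-sum is preserved inside the square, and after collecting terms the computation lands precisely on
\[|S(q,\uz)|^2\ll \#\mathcal{H}^{-2}P^n q^2\sum_{\h\in\mathcal{H}}N(\h)\,T_{\h}(q,\uz),\]
which is \eqref{E430}. The feature worth emphasising (and which is the authors' preparation for Kloosterman refinement later) is that the $\ua$-sum is \emph{kept} inside the Cauchy-Schwarz rather than being discarded via the trivial bound $\#\{\ua\}\ll q^2$; this is precisely what will allow \eqref{eq:Thmaindef0} to be attacked with Kloosterman-type averaging in the quadratic setting.

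Finally I would specialise $\mathcal{H}=\{\h\in\Z^n:|\h|<H\}$, so that $\#\mathcal{H}\asymp H^n$ and $N(\h)\leq \#\mathcal{H}\leq H^n$ for every $\h$, while the range of $\h=\h_1-\h_2$ is contained in $\{\h:|\h|\ll H\}$. Substituting these estimates into \eqref{E430} gives
\[|S(q,\uz)|^2\ll H^{-n}P^n q^2\sum_{\h\ll H}|T_{\h}(q,\uz)|,\]
and taking square roots yields the stated bound. There is no substantive obstacle in the proof: it is a direct consequence of the van der Corput trick, Cauchy-Schwarz, and the degree-lowering effect of $F\mapsto F_\h$. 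The real work is stored in the definition of $T_\h$, which will be estimated by quadratic exponential sum techniques (Proposition \ref{E22}, Lemmas \ref{T1}--\ref{T20}) in the subsequent sections.
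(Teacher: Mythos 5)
Your proposal is correct and follows essentially the same route as the paper: shift-average over $\mathcal{H}$, Cauchy--Schwarz over the pair $(\ua,\x)$ keeping the $\ua$-sum inside the square, the substitution $\y=\x+\h_2$, $\h=\h_1-\h_2$ leading to \eqref{E430}, and then the specialisation $\mathcal{H}=\{|\h|<H\}$ with $N(\h)\leq\#\mathcal{H}=H^n$. No gaps.
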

This bound will be useful to us when $t:=|\z|$ is small, say of size $P^{-3-\Delta}$, since it is wasteful to use \ti{averaged} van der Corput differencing in this case. We will now set up averaged van der Corput differencing, which will be a key in proving Proposition \ref{P02}.

\subsection{Averaged van der Corput}
Throughout this section, $\x_0$ will denote a fixed point satisfying $|\x_0|<1$ in $\x_0\in\supp(\omega)$, where $\supp(\omega)$ is contained in the set $\x_0+(-\rho,\rho)^n$. Likewise, $F$ and $G$ will be cubic polynomials whose leading forms satisfy \eqref{E14} and \eqref{E15} for a fixed orthonormal set of vectors $\e_1',\e_2'$ (see \eqref{eq:e1'e2'}). Let
\begin{equation}
\label{eq:extbasis}
\{\e_1',...,\e_n'\},
\end{equation}
denote an extended orthonormal basis of $\R^n$. We will begin our effort to bound the sum
\begin{equation}
\label{E41}
    \sum_{P^{\Delta}\leq q\leq Q }\int_{P^{-3-\Delta}\leq|\underline{z}|\leq 1/qQ^{1/2}} S(q,\underline{z})\: d\underline{z},
\end{equation}
where $S(q,\z)=\starsum_{\ua\bmod{q}} |S_{\ua}(q,\z)|$ is as defined in \eqref{eq:sqdef}. As in the previous section, let $1\leq H\ll P$ be a parameter to be chosen later. Typically, $H$ will be chosen as a small power of $P$, so it is safe to further assume $H\log P\ll P$. Also, let $\ve>0$ be an arbitrarily small absolute constant to be chosen at the end. Note that the implied constants will be allowed to depend on the choice of $\ve$ after it is introduced into our bounds. As is standard (\cite{Vishe19} for example), we start by splitting the integral over $\z$ above as a sum over $O(P^\ve)$ \ti{dyadic intervals} of the form $[t,2t]$ where $P^{-3+\Delta}\leq t\leq 1/(qQ^{1/2}) $. For convenience, given $t\in \R_{>0}^2$, we will set 
\[I(q,t):=\int_{t\leq |\uz|\leq 2t} S(q,\underline{z})\: d\underline{z}.\]
Analogous to \cite{Hanselmann} and \cite[Sec. 3]{Marmon_Vishe}, for a fixed value of $P^{-3-\Delta}<t<1/qQ^{1/2}$ we choose two sets $T_1$, $T_2$, each of cardinality $O(1+tHP^2)$ such that 
\begin{align}
\label{eq:z}
\{\z: t\leq |\uz|\leq 2t\}&\subseteq \bigcup_{\un{\tau} \in T_1\times T_2} \big{[}\tau_1-(HP^2)^{-1},\tau_1+(HP^2)^{-1}\big{]}\times \big{[}\tau_2-(HP^2)^{-1},\tau_2+(HP^2)^{-1}\big{]}\nonumber\\
&\subseteq \{\z: t-(HP^2)^{-1}\leq|\uz|\leq 2t+(HP^2)^{-1}\}.
\end{align}
Thus, an application of Cauchy-Schwarz further gives
\begin{equation}\label{eq:Iqbound1}
I(q,t)\ll ((HP^2)^{-1}+t)\sum_{\underline{\tau}\in \uT} \mathcal{M}_q(\underline{\tau},H)^{1/2},
\end{equation}
where
\begin{align}
    \label{E42}
    \mathcal{M}_q(\underline{\tau},H):&=\int_{\underline{\tau}-(HP^2)^{-1}}^{\underline{\tau}+(HP^2)^{-1}} |S(q,\underline{z})|^2 \:d\underline{z}\nonumber\\
                                        &\ll \int_{\mathbb{R}^2} \exp(-H^2P^4[(\tau_1-z_1)^2+(\tau_2-z_2)^2])|S(q,\underline{z})|^2 \:d\underline{z}.
\end{align}
Here we have used $\uT:=T_1\times T_2$, and $\int_{\underline{\tau}-(HP^2)^{-1}}^{\underline{\tau}+(HP^2)^{-1}}$ to denote the integral $$\int_{(\tau_1-(HP^2)^{-1}, \tau_1+(HP^2)^{-1})\times (\tau_2-(HP^2)^{-1}, \tau_2+(HP^2)^{-1})} $$ in order to simplify the notation.
After an inspection of the right hand side of \eqref{eq:z}, it is easy to see that
\begin{equation*}
\int_{P^{-3-\Delta}\leq |\underline{z}|\leq 1/qQ^{1/2}} S(q,\underline{z})\: d\underline{z}\ll \sum_{t}((HP^2)^{-1}+t)\sum_{\underline{\tau}\in \uT} \mathcal{M}_q(\underline{\tau},H)^{1/2},
\end{equation*}
where the sum over $t$ runs over $O_\ve(P^\ve)$ choices satisfying
\begin{equation}
\label{eq:tbound}
P^{-3-\Delta}\leq t\leq 1/(qQ).
\end{equation}
Note that the choice of the parameter $H$ will ultimately depend on $t$. For now, we will assume $t$ to be fixed. 

We are therefore first led to find a bound for $|S(q,\underline{z})|^2$ using van der Corput differencing. We may now use the same arguments as those from Section \ref{S: pvdc} to arrive at the following: 
\begin{equation}
    \label{E43}
|S(q,\underline{z})|^2\ll \#\mathcal{H}^{-2}P^{n}q^2 \sum_{\underline{h}\in \mathcal{H}} N(\underline{h})T_{\underline{h}}(q,\underline{z}),
\end{equation}
where $\mathcal{H}\subset \Z^n$ is a set of lattice points to be chosen later, and 
\begin{equation}
    \label{eq:Thmaindef}
    T_{\underline{h}}(q,\underline{z}):= \starsum_{\ua\bmod{q}}\sum_{\underline{y}\in\mathbb{Z}^n} \omega_{\underline{h}}(\underline{y}/P)e((a_1/q+z_1)F_{\h}(\underline{y})+(a_2/q+z_2)G_{\h}(\underline{y})),
\end{equation}
denotes the corresponding exponential sum for the system of quadratic polynomials $F_{\h}$ and $G_{\h}$ (this is a restating of \eqref{E430} and \eqref{eq:Thmaindef0}).

Therefore by \eqref{eq:Iqbound1}, \eqref{E42}, and \eqref{E43}, we have shown the following:
\begin{lemma}
\label{T41}
Let 
\begin{align}
    \label{eq: vdc: U(q,z) def}
    U(q,\z):=\sum_{\un{\tau}\in \un{T}}\Big{(}\sum_{\underline{h}\in \mathcal{H}} N(\underline{h})\int_{\mathbb{R}^2} \exp(-H^2P^4[(\tau_1-z_1)^2+(\tau_2-z_2)^2])T_{\underline{h}}(q,\underline{z})\; d\underline{z}\Big{)}^{1/2}.
\end{align}
Then, for any $1\leq H\leq P$, $\mathcal{H}\subset \mathbb{Z}^n$, and $t$ satisfying \eqref{eq:tbound} we have
\begin{equation}\label{eq:Iqtb}
\begin{split}
I(q,t)\ll &((HP^2)^{-1}+t)\#\mathcal{H}^{-1}P^{n/2}q\, U(q,\z).
\end{split}
\end{equation}
\end{lemma}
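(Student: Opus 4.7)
The plan is to prove Lemma \ref{T41} by chaining together the three intermediate estimates established in the paragraphs preceding the lemma statement: the Cauchy-Schwarz estimate \eqref{eq:Iqbound1}, the Gaussian-majorant bound \eqref{E42} for $\mathcal{M}_q(\underline{\tau},H)$, and the van der Corput style bound \eqref{E43} for $|S(q,\underline{z})|^2$. No new ideas are needed; this lemma is essentially a packaging statement that records the outcome of these three consecutive steps.

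First, I would start from \eqref{eq:Iqbound1}, which already hands us
\[
I(q,t) \ll \bigl((HP^2)^{-1}+t\bigr)\sum_{\underline{\tau}\in \uT} \mathcal{M}_q(\underline{\tau},H)^{1/2}.
\]
Next, I would substitute the Gaussian majorant bound \eqref{E42}, replacing $\mathcal{M}_q(\underline{\tau},H)$ by the integral over $\R^2$ of $\exp(-H^2P^4[(\tau_1-z_1)^2+(\tau_2-z_2)^2])\,|S(q,\underline{z})|^2$. Since this integrand is pointwise nonnegative, the subsequent bound \eqref{E43} for $|S(q,\underline{z})|^2$ can be inserted directly under the integral sign.

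The one small computational step is pulling the $\underline{z}$-independent prefactor $\#\mathcal{H}^{-2}P^n q^2$ from \eqref{E43} out of the square root, which contributes an overall factor of $\#\mathcal{H}^{-1}P^{n/2}q$; and pulling the sum over $\underline{h}\in \mathcal{H}$ (weighted by $N(\underline{h})$) inside the $\underline{z}$-integral, which is permissible since $\mathcal{H}$ is finite. After these manipulations the inner expression is precisely the definition of $U(q,\underline{z})$ given in \eqref{eq: vdc: U(q,z) def}, yielding the claimed bound
\[
I(q,t)\ll \bigl((HP^2)^{-1}+t\bigr)\,\#\mathcal{H}^{-1}P^{n/2}q\, U(q,\underline{z}).
\]

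There is no real obstacle in this proof; the only thing worth checking carefully is that the order of summation and integration can be interchanged freely (it can, by nonnegativity and finiteness of $\mathcal{H}$ and $\uT$), and that the exponent arithmetic $\sqrt{\#\mathcal{H}^{-2}P^n q^2}=\#\mathcal{H}^{-1}P^{n/2}q$ is correctly tracked. All genuine analytic content has already been extracted in the derivation of \eqref{eq:Iqbound1}--\eqref{E43}, so the lemma itself is purely a consolidation statement, intended to isolate $U(q,\underline{z})$ as the object whose estimation will be the focus of the subsequent sections.
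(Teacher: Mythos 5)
Your proposal is correct and follows exactly the route the paper takes: the paper derives Lemma \ref{T41} by the same chaining of \eqref{eq:Iqbound1}, \eqref{E42}, and \eqref{E43}, with the same extraction of the factor $\#\mathcal{H}^{-1}P^{n/2}q$ from under the square root. Nothing is missing.
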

Since we intend to develop a two dimensional version of averaged van der Corput differencing, we intend to choose $\mathcal{H}$ to be a set of size $O(P^2H^{n-2})$ and then use averaging over $ z_1$ and $z_2$ to show that for all but $O((H\log(P))^n)$ of $\underline{h}\in\mathcal{H}$, the value of the averaged integral $\mathcal{M}_q(\underline{\tau},H)$ defined in \eqref{E42} is negligible. This will enable us to `win' an extra factor of $P/H$ in our final estimate for \eqref{E41} when compared to pointwise van der Corput differencing.

Our choice of $\mathcal{H}$ will be informed by the following lemma:
\begin{lemma}
\label{T42}
For any $\underline{h}\in\mathbb{R}^n$, any $1\leq H\leq P$, any fixed $\utau$ and any $N>0$,
\[\int_{-\infty}^{\infty}\int_{-\infty}^{\infty}\exp(-H^2P^4[(\tau_1-z_1)^2+(\tau_2-z_2)^2])T_{\underline{h}}(q,\underline{z})\: d\underline{z}\ll_NP^{-N},\]
provided that $\underline{h}=\sum_{i=1}^n h_i'\underline{e}_i'$ satisfies the following condition: 
\begin{equation}\label{eq:condition}
H\mathcal{L}\ll |h_1'|\ll P \quad \mbox{ or }\quad H\mathcal{L}\ll |h_2'|\ll P,\quad|h_i'|<H \mbox{ for } i\in\{3,\cdots,n\},
\end{equation}
where $\cL=\log(P)$, $\{\e_1',...,\e_n'\}$ denote the basis chosen in \eqref{eq:extbasis} and the implied constants only depend on $n,\|F\|$ and $\|G\|$.
\end{lemma}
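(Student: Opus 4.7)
The plan is to evaluate the $\z$-integral exactly as the product of two one-dimensional Gaussian Fourier transforms, and then to leverage the hypothesis on $\h$ (via Lemma \ref{lem:restatement}) to produce super-polynomial exponential decay that dominates the trivial bound on all other sums.

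First, I would exchange the $\z$-integration with the sums over $\ua$ and $\y$ defining $T_{\h}(q,\z)$. Since the only $\z$-dependence is through the phase $e(z_1 F_{\h}(\y)+z_2 G_{\h}(\y))$, the integral factors into two one-dimensional pieces; each is evaluated exactly by the standard Gaussian Fourier identity $\int_{\R} e^{-A^2 u^2} e(uF)\,du = (\sqrt{\pi}/A)\exp(-\pi^2 F^2/A^2)$ with $A = HP^2$. Collecting, the integral becomes
\[
\frac{\pi}{H^2 P^4}\starsum_{\ua\bmod q}\sum_{\y\in\Z^n}\omega_{\h}(\y/P)\,\Psi_{\ua,\utau}(\y)\,\exp\!\left(-\frac{\pi^2\bigl(F_{\h}(\y)^2+G_{\h}(\y)^2\bigr)}{H^2 P^4}\right),
\]
where $\Psi_{\ua,\utau}(\y)$ is an explicit unit-modulus phase. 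Bounding the outer sums trivially by $q^2 P^n\ll P^{n+3}$ (using $q\leq Q\leq P^{3/2}$), it now suffices to show that uniformly in $\y\in\supp(\omega_{\h}(\cdot/P))$,
\[
F_{\h}(\y)^2+G_{\h}(\y)^2\gg H^2 P^4\,\mathcal{L}^2,
\]
for then the exponential damping is $\ll\exp(-c(\log P)^2)\ll_N P^{-N}$ for every $N$.

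Next, I would reduce to a linear-algebraic lower bound on the leading-in-$\y$ parts of $F_{\h}$ and $G_{\h}$. Using the cubic identity $F(\y+\h)-F(\y) = \h\cdot\nabla F(\y) + \y\cdot\nabla F(\h) + F(\h)$ (and its $G$-analogue), I decompose $F_{\h}(\y), G_{\h}(\y)$ into their leading quadratic-in-$\y$ parts $\h\cdot\nabla F(\y), \h\cdot\nabla G(\y)$ plus sub-leading remainders. The support condition $\omega_\h(\y/P)\neq 0$ forces $|\h|\ll\rho P$, which in turn controls the sub-leading pieces $\y\cdot\nabla F(\h)+F(\h)$; taking $\rho$ sufficiently small in terms of the constants coming from $F$, $G$, and $\x_0$ ensures these remainders are absorbed into the main bound. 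Expanding $\h = \sum_i h_i'\e_i'$ in the basis \eqref{eq:extbasis} further gives
\[
\begin{pmatrix}\h\cdot\nabla F(\y)\\ \h\cdot\nabla G(\y)\end{pmatrix}
= A(\y)\begin{pmatrix}h_1'\\ h_2'\end{pmatrix} + r(\y,\h),\qquad
A(\y):=\begin{pmatrix}\e_1'\cdot\nabla F(\y)&\e_2'\cdot\nabla F(\y)\\ \e_1'\cdot\nabla G(\y)&\e_2'\cdot\nabla G(\y)\end{pmatrix},
\]
with $\|r(\y,\h)\|_\infty\ll HP^2$ from the trivial bound $|\e_i'\cdot\nabla F(\y)|,|\e_i'\cdot\nabla G(\y)|\ll P^2$ and $|h_i'|<H$ for $i\geq 3$.

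The crux is then to invoke Lemma \ref{lem:restatement}: the diagonal entries of $A(\y)$ are at least $M_1 P^2$, while $|A_{12}(\y)|\leq\rho M_2 P^2$ and $|A_{21}(\y)|\leq M_2 P^2$, so $|\det A(\y)|\geq(M_1^2-\rho M_2^2)P^4\gg P^4$ once $\rho$ is small enough, yielding $\|A(\y)^{-1}\|_\infty\ll P^{-2}$. Hypothesis \eqref{eq:condition} gives $\|(h_1',h_2')\|_\infty\gg H\mathcal{L}$, hence $\|A(\y)(h_1',h_2')^T\|_\infty\gg P^2\cdot H\mathcal{L}$. Since $\|r(\y,\h)\|_\infty\ll HP^2$ is smaller by a full factor of $\mathcal{L}$, one concludes $\max(|\h\cdot\nabla F(\y)|,|\h\cdot\nabla G(\y)|)\gg HP^2\mathcal{L}$, and combined with the sub-leading absorption this yields $\max(|F_{\h}(\y)|,|G_{\h}(\y)|)\gg HP^2\mathcal{L}$, as required. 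The main technical obstacle is the uniform invertibility of $A(\y)$ on $\supp(P\omega)$: the suppression of $\e_2'\cdot\nabla F(\y)$ by a factor of $\rho$ is precisely what the Gram--Schmidt construction in \eqref{eq:e1'e2'} buys us, and it is this suppression that keeps $|\det A(\y)|\gg P^4$ throughout the support; the secondary delicate point is the careful choice of $\rho$ (in terms of the fixed constants $\|F\|,\|G\|$ and $\x_0$) that makes the sub-leading-in-$\y$ terms of $F_{\h},G_{\h}$ harmless.
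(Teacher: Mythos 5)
Your proposal is correct and follows essentially the same route as the paper: exact Gaussian evaluation of the $\z$-integral, reduction to the lower bound $\max(|F_{\h}(\y)|,|G_{\h}(\y)|)\gg HP^2\mathcal{L}$ for $\y\in\supp(P\omega_{\h})$, and an appeal to Lemma \ref{lem:restatement} together with smallness of $\rho$ to absorb the terms of higher order in $\h$. The only cosmetic difference is that you obtain the key lower bound by inverting the $2\times 2$ matrix $A(\y)$ (using $|\det A(\y)|\gg P^4$), whereas the paper runs the equivalent two-case analysis according to whether $|h_2'|\ll\rho^{-1/2}|h_1'|$ or $|h_1'|\ll\rho^{1/2}|h_2'|$.
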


\begin{proof}
We start by rewriting:
\begin{align*}
    \int_{\R^2}\exp(-H^2P^4[(\tau_1-z_1)^2+(\tau_2-&z_2)^2])T_{\underline{h}}(q,\underline{z})\: d\underline{z}\: \\
    &=\sum_{\underline{y}\in\mathbb{Z}^n}\sum_{\ua}{}^*\omega_{\underline{h}}(\underline{y}/P)e_q(a_1F_{\h}(\y)+a_2G_{\h}(\y))J(\underline{h},\underline{y}),
\end{align*}\
where 
\begin{align}
    \label{eq: vdc : J(h,y) definition}
     J(\underline{h},\underline{y})=\int_{\mathbb{R}^2}\exp(-H^2P^4[(\tau_1-z_1)^2+(\tau_2-z_2)^2])e(z_1F_{\underline{h}}(\underline{y})+z_2G_{\underline{h}}(\underline{y})) d\underline{z},
\end{align}
and $e_q(x):=e^{2\pi i x/q}$. We may separate the two integrals over $\z$ and integrate them to get
\begin{align}
     J(\underline{h},\underline{y})= \frac{\pi}{H^2P^4} \exp\Big{(}-\frac{\pi^2}{H^2P^4}\big{(}|F_{\underline{h}}(\underline{y})|^2+|G_{\underline{h}}(\underline{y})|^2\big{)}\Big{)}&e(-\tau_1F_{\underline{h}}(\underline{y})-\tau_2G_{\underline{h}}(\underline{y}))\nonumber.
\end{align}
We note that if either $|F_{\underline{h}}(\underline{y})|$ or $|G_{\underline{h}}(\underline{y})|$ are $\gg HP^2\mathcal{L}$, then trivially bounding everything in $J$ from above gives:
\begin{align*}
\sum_{\underline{y}\in\mathbb{Z}^n}\starsum_{\ua\bmod{q}}\omega_{\underline{h}}(\underline{y}/P)e_q(a_1F_{\h}(\y)+a_2G_{\h}(\y))J(\underline{h},\underline{y})&\ll P^nq^2\frac{1}{H^2P^4}\exp(-m\mathcal{L}^2)\\&\ll_N P^{-N},
\end{align*}
for some constant $m>0$. Therefore it is sufficient to show that there exist constants $0<c_1,c_2<1$ such that for every $\underline{h}\in\mathbb{R}^n$ with
\begin{align}
\label{E44}
H\mathcal{L}\ll |h_1'|<c_1P \quad \mbox{ or }\quad H\mathcal{L}\ll |h_2'|<c_2P,\quad|h_i'|<H \mbox{ for } i\in\{3,\cdots,n\},
\end{align}
we have 
\begin{equation}
    \label{E45}
    |F_{\underline{h}}(\underline{y})|\gg HP^2\mathcal{L}\quad \mbox{or}\quad |G_{\underline{h}}(\underline{y})|\gg HP^2\mathcal{L},
\end{equation}
where $\underline{h}'=(h_1',\cdots,h_n')$ is defined by
\begin{equation}
    \label{E46}
    \underline{h}=\sum_{i=1}^n h_i\underline{e}_i=\sum_{i=1}^n h_i'\underline{e}_i'.
\end{equation}

We will rewrite $F_{\underline{h}}$ as follows:
\[F_{\underline{h}}(\underline{y})=\nabla F(\underline{y})\cdot \underline{h}+\underline{h}^t\mathcal{H}_F(\underline{y})\underline{h}+F_{\underline{h}}^{(2)}\]
where $F_{\underline{h}}^{(2)}$ is the constant part of $F_{\underline{h}}$ and $\mathcal{H}_F(\underline{y})$ is the Hessian of $F$ evaluated at $\underline{y}$. Now for $\underline{h}$ satisfying \eqref{E44}, we have
\begin{align}
\nonumber F_{\underline{h}}(\underline{y})&=\nabla F(\underline{y})\cdot \underline{h}+\Big{(}\sum h_i'\underline{e}_i'\Big{)}^t\mathcal{H}_F(\underline{y})\Big{(}\sum h_i'\underline{e}_i'\Big{)}+F_{\underline{h}}^{(2)}\\
                           \label{eq:fhb1}&=\nabla F(\y)\cdot \underline{h}+F_{\underline{h}}^{(2)}+O(|h_1'|^2P)+O(|h_2'|^2P)+O(HP^2),
\end{align}
where $F_{\h}^{(2)}$ is a cubic polynomial in $\h$, and the implied constants depend only on $\|F\|$, $\|G\|$ and $n$. Note that
\begin{align*}
     F_{\underline{h}}^{(2)}&= O(|h_1'|^3)+O(|h_2'|^3)+O(H^3),
\end{align*}
and so we may simplify \eqref{eq:fhb1} to
\begin{align}
\label{eq: vdc differencing: f_h(y) equality 1}
F_{\underline{h}}(\underline{y})&=\nabla F(\underline{y})\cdot \underline{h} +O(|h_1'|^2P)+O(|h_2'|^2P)+O(HP^2),
\end{align}
since $H, |h_1'|, |h_2'|<P$. We also write $\h=h_1'\e_1'+...+h_n'\e_n'$ and invoke \eqref{E14} and \eqref{E15} to further get that for all $\y\in\supp(P\omega)$ we have
\begin{align*}
|\nabla F(\underline{y})\cdot \underline{h}|\geq |h_1'| M_1P^2+O(\rho |h_2'| P^2)+O(HP^2),
\end{align*}
and so we get
\begin{align}
\label{eq: vdc differencing: 1}
|F_{\underline{h}}(\underline{y})|\geq M_1|h_1'| P^2+O(\rho |h_2'| P^2)+O(|h_1'|^2P)+O(|h_2'|^2P)+O(HP^2),
\end{align}
by \eqref{eq: vdc differencing: f_h(y) equality 1}. For now, let us focus on the case $|h_2'|\ll\rho^{-1/2}| h_1'|$. In this case, we must have that $h_1'$ satisfies \eqref{E44}. Furthermore, upon choosing $c_1\leq \rho^2$ and by \eqref{E44}, we have
\begin{align*}
    \rho |h_2'| P^2&\ll \rho^{1/2} |h_1'|P^2, 
    &|h_1'|^2P\leq c_1 |h_1'|P^2 \leq \rho^2 |h_1'|P^2,\\
    |h_2'|^2P&\ll \rho^{-1}|h_1'|^2P\leq \rho^{-1}c_1|h_1'| P^2 \leq \rho |h_1'|P^2,
    &HP^2\ll |h_1'|P^2 \mathcal{L}^{-1}\ll \rho |h_1'|P^2.
\end{align*}
Hence, we may simplify \eqref{eq: vdc differencing: 1} to obtain
\begin{align*}
    |F_{\underline{h}}(\underline{y})|\geq M_1|h_1'| P^2+O(\rho^{1/2} |h_1'| P^2)\gg |h_1'|P^2\gg HP^2\mathcal{L},    
\end{align*}
provided that $\rho$ is chosen to be sufficiently small with respect to $M_1$.

It now remains to study the case $|h_1'|\ll \rho^{1/2}|h_2'|$. In this case, we instead have that $h_2'$ must satisfy the bound in \eqref{E44}. We now apply the same process used to obtain \eqref{eq: vdc differencing: f_h(y) equality 1} to $G_{\underline{h}}(\underline{y})$ to obtain
\begin{equation}
\label{E47}
    G_{\underline{h}}(\underline{y})=\nabla G(\underline{y})\cdot \underline{h}+O(|h_1'|^2P)+O(|h_2'|^2P)+O(HP^2),
\end{equation}
where the implied constants again depend only on $n$, $\|F\|$ and $\|G\|$. Note again that 
\begin{align*}
    \nabla G(\y)\cdot \h=h_1'\nabla G(\y)\cdot \e_1'+h_2'\nabla G(\y)\cdot \e_2'+O(HP^2).
\end{align*} 
Combining this with \eqref{E47}, and applying \eqref{E14} - \eqref{E15} gives
\begin{align}
\label{eq: vdc differencing: 2}
|G_{\underline{h}}(\underline{y})|\geq M_1|h_2'| P^2+O(|h_1'| P^2)+O(|h_1'|^2P)+O(|h_2'|^2P)+O(HP^2).
\end{align}
We now aim to simplify \eqref{eq: vdc differencing: 2}. Using the assumption that $|h_1'|\ll \rho^{1/2}|h_2'|$, the fact that $|h_2'|$ must obey \eqref{E44} in this case, and setting $c_2\leq \rho$ we have
\begin{align*}
    |h_1'| P^2&\ll \rho^{1/2} |h_2'|P^2, 
    &|h_1'|^2P\ll \rho |h_2'|^2P\leq \rho c_2|h_2'| P^2 \leq \rho^2 |h_2'|P^2,\\
    |h_2'|^2P&\leq c_2 |h_2'|P^2 \leq \rho |h_1'|P^2,&
    HP^2\ll |h_1'|P^2 \mathcal{L}^{-1}\ll \rho |h_1'|P^2.
\end{align*}
Hence
\begin{align*}
|G_{\underline{h}}(\underline{y})|\geq M_1|h_2'|P^2+O(\rho^{1/2} |h_2'| P^2)\gg |h_2'|P^2\gg HP^2\mathcal{L},
\end{align*}
as long as $\rho$ is chosen small enough.

\end{proof}

The lemma above leads to the following natural choice for $\mathcal{H}$:
\begin{equation}
\label{E49}
\mathcal{H}:=\{\underline{h}\in\mathbb{Z}^n:0\leq h_1'<c_1P,\; 0\leq h_2'<c_2P,\;0\leq h_i'<H \mbox{ for } i\in\{3,\cdots,n\}\},
\end{equation}
where $c_1$ and $c_2$ are the implied constants arising in \eqref{eq:condition}.
Essentially, $\mathcal{H}$ is chosen to be the collection of lattice points inside of a fixed $n$ dimensional cuboid, $B_P$, centred at the origin, with volume $\Vol(B_P)=c_1c_2P^2H^{n-2}$. The sides of the cuboid are in the direction of the basis vectors $\{\underline{e}_1',\cdots,\underline{e}_n'\}$. We now claim that 
\begin{equation}\label{eq:Hcount}
P^2H^{n-2}\ll\#\mathcal{H}\ll P^2H^{n-2}.
\end{equation} This follows very easily from the following asymptotic formula for a general cuboid $B$ with side lengths $l_1,\cdots,l_n$. It is easy to see that
\[\#\{\Z^n\cap B\}=\Vol(B)+\sum_{i=1}^n O(\prod_{j\neq i} l_j).\]
The error comes from estimating the $n-1$ dimensional boundary of $B$. In our case $l_1=c_1P$,$l_2=c_2P$, $l_i=H$ for $i\geq 3$, which leads to \eqref{eq:Hcount}. Note that $\mathcal{H}$ is chosen as in \eqref{E49} so that we can use the bound Lemma \ref{T42}. In particular, we can now show the following:
\begin{lemma}
\label{T43}
Let $1\leq H\leq P$ and let
\[\tilde{\mathcal{H}}:=\{\underline{h}\in\mathbb{Z}^n: |\h|\ll H\mathcal{L}\}.\]
Then for any $1\leq H\leq P$, any $1\leq N$, and any $t>0$ such that \eqref{eq:tbound} holds, we have
\begin{equation*}
\begin{split}
I(q,t)\ll H^{-n/2+1}(\log P)^{1/2}P^{n/2-1}q((HP^2)^{-1}+t)^2\Big{(}\sum_{\underline{h}\in \tilde{\mathcal{H}}}\max_{\uz}|T_{\underline{h}}(q,\underline{z})|\; &\Big{)}^{1/2}+O_N(P^{-N}),
\end{split}
\end{equation*}
where the maximum over $\uz$ is taken over the set 
\begin{equation}
\label{eq:uzregion}
t-(HP^2)^{-1}\leq |\uz|\leq 2t+(HP^2)^{-1}.
\end{equation}
\end{lemma}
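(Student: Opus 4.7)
The plan is to refine the bound of Lemma \ref{T41} by two successive simplifications of $U(q,\z)$: first, discarding $\h\in\mathcal{H}$ whose coordinates in the basis $\{\e_i'\}$ are large; and second, pulling a pointwise $\max_{\uz}|T_\h(q,\uz)|$ out of the Gaussian-weighted integral inside \eqref{eq: vdc: U(q,z) def}.

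The first simplification uses that the construction of $\mathcal{H}$ in \eqref{E49} already enforces $|h_i'|<H\le H\mathcal{L}$ for $i\in\{3,\ldots,n\}$. Consequently, every $\h\in\mathcal{H}$ with either $|h_1'|\gg H\mathcal{L}$ or $|h_2'|\gg H\mathcal{L}$ satisfies the hypothesis \eqref{eq:condition} of Lemma \ref{T42}, and therefore contributes only $O_N(P^{-N})$ to the Gaussian integral. Summing these trivial contributions over all $\h\in\mathcal{H}$ (with $\#\mathcal{H}\ll P^n$) and absorbing into $N$, the $\h$-sum in $U(q,\z)$ reduces to $\h\in\tilde{\mathcal{H}}$ modulo an $O_N(P^{-N})$ error.

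For the second simplification, I would bound
\[\int_{\R^2}\exp(-H^2P^4|\utau-\uz|^2)|T_\h(q,\uz)|\,d\uz\ll (HP^2)^{-2}\max_{\uz}|T_\h(q,\uz)|+O_N(P^{-N}),\]
where the maximum is over the region \eqref{eq:uzregion}; the contribution of $\uz$ outside an $O((HP^2)^{-1}\mathcal{L}^{1/2})$-neighbourhood of $\utau$ is absorbed using the trivial bound $|T_\h(q,\uz)|\ll q^2P^n$. Using $N(\h)\le\#\mathcal{H}\ll P^2H^{n-2}$ and $|\uT|\ll (1+tHP^2)^2$ while summing over $\utau\in\uT$, one obtains
\[U(q,\z)\ll HP^2\bigl((HP^2)^{-1}+t\bigr)^2\,\#\mathcal{H}^{1/2}\Bigl(\sum_{\h\in\tilde{\mathcal{H}}}\max_{\uz}|T_\h(q,\uz)|\Bigr)^{1/2}+O_N(P^{-N}),\]
after applying the identity $1+tHP^2=HP^2((HP^2)^{-1}+t)$. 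Substituting this into the tighter form $I(q,t)\ll (HP^2)^{-1}\#\mathcal{H}^{-1}P^{n/2}q\,U(q,\z)$ of Lemma \ref{T41}, which follows from its Cauchy-Schwarz step since each box $[\utau\pm (HP^2)^{-1}]^2$ has measure $4(HP^2)^{-2}$, produces the claimed bound.

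The main obstacle is the careful handling of the Gaussian tails. The natural concentration region of the Gaussian weight is an $(HP^2)^{-1}\mathcal{L}^{1/2}$-thickening of $\uT$, which is marginally larger than the stated region \eqref{eq:uzregion}; the factor $(\log P)^{1/2}$ in the target bound arises precisely from absorbing this mismatch when bounding $|T_\h(q,\uz)|$ in the extended Gaussian-tail region in terms of its maximum over \eqref{eq:uzregion}. Routine adjustments of $N$ ensure that the several $O_N(P^{-N})$ errors, arising from the discarded large $\h$ and from the Gaussian tails, combine into a single $O_N(P^{-N})$.
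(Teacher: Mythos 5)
Your proposal is correct and follows essentially the same route as the paper: discard $\h\in\mathcal{H}\setminus\tilde{\mathcal{H}}$ via Lemma \ref{T42}, evaluate the Gaussian integral as $(HP^2)^{-2}\max_{\uz}|T_{\h}|$ plus negligible tails, and combine $N(\h)\le\#\mathcal{H}\asymp P^2H^{n-2}$, $\#\uT\ll(1+tHP^2)^2$ and the $(HP^2)^{-1}$ form of Lemma \ref{T41}. Your observation that the Gaussian concentration region slightly exceeds \eqref{eq:uzregion} (note the tails actually force a radius $\asymp(HP^2)^{-1}\mathcal{L}$ rather than $(HP^2)^{-1}\mathcal{L}^{1/2}$) is an imprecision shared with the paper's own proof, and is resolved there only in Lemma \ref{T45} by enlarging the region to $P^{\ve}(HP^2)^{-1}$.
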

\begin{proof}
Let $\scrH$ be as in \eqref{E41}. Then we use the decomposition $\mathcal{H}=\tilde{\mathcal{H}}\cup \mathcal{H}\backslash\tilde{\mathcal{H}}$. 
By construction, 
\begin{align*}
    \mathcal{H}\backslash\tilde{\mathcal{H}}=\{\underline{h}\in\mathbb{Z}^n:H\mathcal{L}\ll |h_1'|<c_1P \mbox{ or }\; &H\mathcal{L}\ll |h_2'|<c_2P,\: |h_i'|<H, \mbox{ for } i\in\{3,\cdots,n\}\}.
\end{align*}
Furthermore, note that for any fixed $\h$, $N(\h)$ as defined in \eqref{eq:Nhwomegahdef} satisfies the bound
\begin{equation}
\label{eq:Hboun}
N(\h)\ll \#\scrH\ll P^2H^{n-2}.
\end{equation}
 
Therefore by Lemma \ref{T42},
\begin{align*}
\#\scrH^{-1}\left(\sum_{\underline{h}\in \mathcal{H}\setminus\tilde{\scrH}} N(\underline{h})\int_{\mathbb{R}^2} \exp(-H^2P^4[(\tau_1-z_1)^2+(\tau_2-z_2)^2])T_{\underline{h}}(q,\underline{z}) d\underline{z}\right)^{1/2}
           \ll P^{-N}.
\end{align*}
Further combining with the bounds $q\leq Q\leq P^{3/2} $ and $\#\uT\ll (1+tHP^2)^2\ll P^6$, which arises from using crude bounds $t\leq 1$ and $1\leq H\leq P$, we may bound the contribution from the sum over $\h\in\scrH\setminus \tilde{\scrH}$ in \eqref{eq:Iqtb}, $U(q,\z)$, as follows:
\begin{align*}
U(q,\z)&\ll((HP^2)^{-1}+t)P^{n/2}q\#\mathcal{H}^{-1}\times\\
&\qquad\qquad\qquad\quad\sum_{\un{\tau}\in \un{T}}\Big{(}\sum_{\underline{h}\in \mathcal{H}} N(\underline{h})\int_{\mathbb{R}^2} \exp(-H^2P^4[(\tau_1-z_1)^2+(\tau_2-z_2)^2])T_{\underline{h}}(q,\underline{z})\; d\underline{z}\Big{)}^{1/2}\\
&\ll_N P^{-2+n/2+3/2-N}\ll_N P^{(n-1)/2-N}\ll_{n,N} P^{-N},
\end{align*}
as $N$ is allowed to be arbitrarily large.
Therefore, combining this with Lemma \ref{T41}, we get
\begin{align}\label{eq:12}
I(q,\ut)\ll (H&P^2)^{-1}\#\mathcal{H}^{-1/2}P^{n/2}q\:\times\nonumber\\
&\sum_{\tau\in T}\Big{(}\sum_{\underline{h}\in \tH}\int_{\mathbb{R}^2} \exp(-H^2P^4[(\tau_1-z_1)^2+(\tau_2-z_2)^2])T_{\underline{h}}(q,\underline{z})\; d\underline{z}\Big{)}^{1/2}+O_{n,N}(P^{-N}).
\end{align}
Further note that for a fixed $\tau$ and for any $z$ satisfying $|z-\tau|\geq HP^2\scrL$ we have the following decay of the function in the integrand:
\begin{equation}\label{eq:integrand}
\exp(-H^2P^4(\tau-z)^2)\ll \frac{\exp(-\cL^2/2)}{|z-\tau|^2+1}\ll_N \frac{P^{-N}}{|z-\tau|^2+1}.
\end{equation} 
Thus, in the same vein as before, using bound \eqref{eq:integrand} in \eqref{eq:12} we may obtain
\begin{align*}
I(q,\ut)\ll ((HP^2)^{-1}+t)\#\mathcal{H}^{-1/2}P^{n/2}q\sum_{\utau\in \uT}\Big{(}\sum_{\underline{h}\in \tH}\int_{\utau-((HP^2)^{-1}+t)\cL}^{\utau+(HP^2)^{-1}\cL}|T_{\underline{h}}(q,\underline{z})|\; d&\underline{z}\Big{)}^{1/2}+O_{n,N}(P^{-N}).
\end{align*}
The lemma now follows after using \eqref{eq:Hcount} to estimate $\#\scrH$, using the estimate $\#\uT=O((1+tHP^2)^2)$, and \eqref{eq:z} which allows us to take the maximum over all possible $\z$ appearing in the expression.
\end{proof}
 Since $H$ is arbitrary, we may re-label $H\mathcal{L}$ as $H$ at the expense of a factor of size at most $O_\ve(P^\ve))$ we can now conclude the following
\begin{lemma}
\label{T45}
For any $1\leq H\ll P$, any $0<\ve<1$, any $\ut$ satisfying \eqref{eq:tbound} and any $N\geq 1$ we have
\[I(q,t)\ll_{\ve,n,N} H^{-n/2+1}P^{n/2-1+\ve}q((HP^2)^{-1}+t)^2\Big{(}\max_{|\uz|}\sum_{|\h|\ll H}|T_{\underline{h}}(q,\underline{z})|\; \Big{)}^{1/2}+P^{-N},\]
where the maximum over $\uz$ is taken over the set \begin{equation}
\label{eq:uzregion1}
t-P^\ve(HP^2)^{-1}\leq |\uz|\leq 2t+P^\ve(HP^2)^{-1}.
\end{equation}
\end{lemma}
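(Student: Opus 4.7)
The plan is to derive this lemma from Lemma \ref{T43} by (i) interchanging the order of $\max_\uz$ and $\sum_\h$ in the bound, and (ii) relabelling $H\cL \to H$. Both steps are routine; the content of the argument lies entirely in Lemma \ref{T43}.

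For step (i), I re-examine the final step of the proof of Lemma \ref{T43}. There, a typical contribution has the form
\[
\sum_{\h \in \tH} \int_{|\utau - \uz| \ll L} |T_\h(q,\uz)| \, d\uz \leq L^2 \sum_{\h \in \tH} \max_\uz |T_\h(q,\uz)|,
\]
where $L \asymp \cL((HP^2)^{-1}+t)$. Since the sum over $\h$ is finite, one may equally well interchange summation and integration first:
\[
\sum_{\h \in \tH} \int |T_\h(q,\uz)| \, d\uz = \int \sum_{\h \in \tH} |T_\h(q,\uz)| \, d\uz \leq L^2 \max_\uz \sum_{\h \in \tH} |T_\h(q,\uz)|.
\]
This variant, inserted into the closing manipulations of the proof of Lemma \ref{T43}, yields the same bound but with $\sum_\h \max_\uz$ replaced by $\max_\uz \sum_\h$, over the unchanged set $\tH = \{|\h| \ll H\cL\}$.

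For step (ii), I apply this variant with $H' := H/\cL$ in place of $H$. Then $\{|\h| \ll H'\cL\} = \{|\h| \ll H\}$ matches the required summation range. Each factor $(H')^{-n/2+1} = H^{-n/2+1}\cL^{n/2-1}$ and $(H'P^2)^{-1} = \cL(HP^2)^{-1}$ picks up an $\cL^{O_n(1)}$, which together with the $(\log P)^{1/2}$ already present in Lemma \ref{T43} is bounded by $P^\varepsilon$ for $P$ large. The region in which the maximum over $\uz$ is taken correspondingly widens from $|\uz - t| \ll (HP^2)^{-1}$ to $|\uz - t| \ll P^\varepsilon(HP^2)^{-1}$, in agreement with \eqref{eq:uzregion1}. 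The additive error $O_N(P^{-N})$ and the constraint $1 \leq H \ll P$ are preserved (the latter because $1 \leq H' \leq P$ is implied by $1 \leq H \ll P$ for $P$ sufficiently large).

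No substantive obstacle arises: once Lemma \ref{T43} is in hand, the entire derivation is bookkeeping. The only mild subtlety is ensuring that the max/sum interchange is performed before (rather than after) the length-times-maximum estimate, and this is harmless because the sum is finite and the estimate is independent of whichever order one chooses.
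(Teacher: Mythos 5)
Your proof is correct and follows the paper's intended route (relabel $H\mathcal{L} \to H$, absorbing logarithms into $P^\ve$). You have also correctly spotted and patched a point the paper's exposition glosses over: Lemma \ref{T43} as stated gives a bound with $\sum_{\h}\max_{\z}$, from which one cannot deduce the $\max_{\z}\sum_{\h}$ appearing in Lemma \ref{T45} (the inequality $\max_{\z}\sum_{\h}\leq \sum_{\h}\max_{\z}$ runs the wrong way for this), so one must re-enter the final display of the proof of Lemma \ref{T43}, interchange $\sum_{\h}$ with the $\z$-integral, and only then pass to the supremum, exactly as you do. The only loose end is the requirement $H'\geq 1$ when setting $H'=H/\mathcal{L}$ (which fails for $H\ll \mathcal{L}$), but this is harmless since for such small $H$ one can simply run the argument with $H'=1$ and absorb the resulting $\mathcal{L}^{O_n(1)}$ discrepancy into $P^\ve$; the paper's own phrasing is equally informal on this point.
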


\section{Quadratic Exponential Sums: Initial Consideration}
\label{Sec: Exponential Sums: Initial Bounds}
The differencing technique used in Section \ref{vdc} leads us to consider quadratic exponential sums $T_\h(q,\uz)$ (see \eqref{eq:Thmaindef}) for a family of differenced quadratic forms $F_{\h}$ and $G_\h$. Throughout this section, let $q$ denote an arbitrary but fixed integer.  Our main goal here is to estimate quadratic sums corresponding to a general system of quadratic polynomials $F,G$ defined as
\begin{equation}
\label{eq:Thdef1}
T(q,\z):=\starsum_{\ua}^q\sum_{\underline{y}\in\mathbb{Z}^n} \omega(\underline{y}/P)e((a_1/q+z_1)F(\underline{y})+(a_2/q+z_2)G(\underline{y})).
\end{equation}
Here $F$ and $G$ denote a system of quadratic polynomials with integer coefficients and $\omega$ denotes a compactly supported function on $\R^n$.  Let us denote leading quadratic parts of $F$ and $G$ by $F^{(0)}$ and $G^{(0)}$ respectively.  We further assume that the quadratic forms $\Fnull$ and $\Gnull$ are defined by integer matrices $M_1$ and $M_2$ respectively. We will later apply the estimates in this section by setting $F=F_\h$ and $G=G_\h$.

Given a (finite or infinite) prime $p$, by $s_p$ we denote 
\begin{equation}
\label{eq:mpdef}
s_p:=s_p(\Fnull,\Gnull),
\end{equation}
where further, given a set of forms $F_1,...,F_R$,  
$s_p(F_1,...,F_R)$ denotes the dimension of singular locus of the projective complete intersection variety defined by the simultaneous zero locus of the forms $F_1,...,F_R$. That is:
\begin{align*}
    s_p(F_1,...,F_R):=\dim \{\x\in \mb{P}_{\overline{\FF}_p}^n \: : \: F_1(\x)=\cdots&=F_R(\x)=0,\rank_p (\nabla F_1(\x), \cdots, \nabla F_R(\x))<2\}.
\end{align*}
When $n\geq 2$, given an integer $q$, we define $D(q)$ by
\begin{equation}
\label{eq:Ddef}
D(q):=\prod_{\substack{p\mid q\\ p \textrm{ prime }}} p^{s_p+1}.
\end{equation}
On the other hand, when $n=1$, we define $D(q)$ as
\begin{equation}
\label{eq:Ddef1}
D(q):=(q,\cont(F^{(0)}),\cont(G^{(0)})),
\end{equation}
where, given a polynomial $F$, $\cont(F)$ is the gcd of all its coefficients.
 
As is standard, we begin by applying Poisson summation to $T(q,\z)$. This will allow us separate the sum over $\ua$ and the integral over $\z$, into an exponential sum and an exponential integral respectively. In particular, applying Poisson summation gives us the following:
\begin{lemma}
\label{T61}
We have
\[T(q,\z)=q^{-n} \sum_{\underline{m}\in\mathbb{Z}} S(q;\underline{m})I(\z;q^{-1}\underline{m})\]
where
\begin{equation}\label{eq:Sexpsumdef}
S(q; \underline{m},F, G)=S(q; \underline{m}):=\starsum_{\ua}^q\sum_{\underline{u} \bmod{q}} e_q(a_1F(\underline{u})+a_2G(\underline{u})+\underline{m}\cdot\underline{u}),
\end{equation}
and
\begin{equation}\label{eq:Iexpintdef}
I(\underline{\gamma};\underline{k}):=\int_{\mathbb{R}^n}\omega(\underline{x}/P)e(\gamma_1F(\underline{x})+\gamma_2G(\underline{x})-\underline{k}\cdot \underline{x}) \;d\underline{x}.
\end{equation}
\end{lemma}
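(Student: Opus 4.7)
The plan is to apply the Poisson summation formula after separating the modular part of the exponential from the continuous part. First I would write each $\y \in \Z^n$ as $\y = \u + q\v$ with $\u$ ranging over a complete residue system modulo $q$ and $\v \in \Z^n$. Since $F$ and $G$ have integer coefficients and $\y \equiv \u \pmod{q}$, we have $F(\u+q\v) \equiv F(\u) \pmod{q}$ and similarly for $G$, so the factor $e_q(a_1 F(\y)+a_2 G(\y))$ depends only on $\u$. This gives
\[
T(q,\z) = \starsum_{\ua}^{q} \sum_{\u \bmod q} e_q(a_1 F(\u)+a_2 G(\u)) \sum_{\v \in \Z^n} \omega((\u+q\v)/P)\, e(z_1 F(\u+q\v)+z_2 G(\u+q\v)).
\]

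The next step is to apply Poisson summation to the innermost sum over $\v$. The function $\v \mapsto \omega((\u+q\v)/P)\, e(z_1 F(\u+q\v)+z_2 G(\u+q\v))$ is smooth and compactly supported in $\v$, so its Poisson transform is absolutely convergent. After the change of variables $\x = \u + q\v$, whose Jacobian contributes the factor $q^{-n}$, the Fourier coefficient at $\m \in \Z^n$ becomes
\[
q^{-n}\, e_q(\m\cdot\u) \int_{\R^n} \omega(\x/P)\, e\bigl(z_1 F(\x)+z_2 G(\x) - q^{-1}\m\cdot\x\bigr)\, d\x = q^{-n}\, e_q(\m\cdot\u)\, I(\z; q^{-1}\m),
\]
where the phase $e_q(\m\cdot\u)$ comes from shifting the argument by $\u$ inside the exponential.

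Substituting this back and interchanging the order of summation so that the sum over $\m \in \Z^n$ is outermost, the remaining double sum over $\ua$ and $\u$ collapses to precisely the exponential sum $S(q;\m)$ in \eqref{eq:Sexpsumdef}, yielding
\[
T(q,\z) = q^{-n} \sum_{\m \in \Z^n} S(q;\m)\, I(\z; q^{-1}\m).
\]
The argument is essentially mechanical once the decomposition $\y = \u + q\v$ is in place; the only point requiring mild care is justifying absolute convergence of the Poisson sum over $\m$, which follows from repeated integration by parts in $I(\z; q^{-1}\m)$ using the smoothness and compact support of $\omega$, giving rapid decay in $|\m|$.
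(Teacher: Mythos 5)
Your proposal is correct and follows essentially the same argument as the paper: decompose $\y=\u+q\v$, factor out the $e_q$ part, apply Poisson summation in $\v$, and perform the change of variables $\x=\u+q\v$ to produce the Jacobian $q^{-n}$ and the phase $e_q(\m\cdot\u)$ that completes $S(q;\m)$. Your added remark about absolute convergence via integration by parts is a fine supplement that the paper leaves implicit.
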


\begin{proof}
The proof of Lemma \ref{T61} is standard and can be obtained by slightly modifying \cite[Lemma 8]{Browning-Heath-Brown09}: Let $\x=\uu+q\uv$. Then
\begin{align*}
    T(q,\z)&=\starsum_{\ua}^q\sum_{\underline{u} \bmod{q}}\sum_{\uv\in\Z^n} \omega((\uu+q\uv)/P)e([a_1/q+z_1]F(\underline{u}+q\uv)+[a_2/q+z_2]G(\underline{u}+q\uv))\\
    &=\starsum_{\ua}^q\sum_{\underline{u} \bmod{q}} e_q(a_1F(\uu)+a_2G(\uu))\sum_{\uv\in\Z^n}\omega((\uu+q\uv)/P)e(z_1F(\underline{u}+q\uv)+z_2G(\underline{u}+q\uv)).
\end{align*}
We now apply Poisson summation on the second sum (and use the substitution $\x=\uu+q\uv$) to get
\begin{align*}
T(q,\z)&=\starsum_{\ua}^q\sum_{\underline{u} \bmod{q}} e_q(a_1F(\uu)+a_2G(\uu)) \times \\
    &\quad\quad\quad\quad\sum_{\m\in\Z^n}\int_{\R^n} \omega((\uu+q\uv)/P)e(z_1F(\underline{u}+q\uv)+z_2G(\underline{u}+q\uv)-\m\cdot \uv)\, d\uv\\
    &=q^{-n}\sum_{\m\in\Z^n}\starsum_{\ua}^q\sum_{\underline{u} \bmod{q}} e_q(a_1F(\uu)+a_2G(\uu)+\m\cdot \uu)\times\\
    &\quad\quad\quad\quad\quad\quad\quad\quad\quad\quad\int_{\R^n} \omega(\x/P)e(z_1F(\x)+z_2G(\x)-q^{-1}\m\cdot \x),\, d\x\\
\end{align*}
as required.
\end{proof}
As a result, we trivially have the following pointwise bound
\begin{equation}
    \label{eq: T_h(q,z): Poisson Ptwise bound: 1 }
    |T(q,\z)|\leq q^{-n}\sum_{\underline{m}\in\mathbb{Z}} |S(q;\underline{m})|\cdot |I(\z;q^{-1}\underline{m})|.
\end{equation}
The treatment of the exponential integral is standard. In particular, we can use the following lemma to bound $I(\z;q^{-1}\underline{m})$:
\begin{lemma}
\label{E620}
Let $F,G$ be quadratic polynomials such that $\max\{\|F\|_P,\|G\|_P\}\ll H$, where
\begin{align}
    \|F\|_P:=\|P^{-\deg(F)}F(Px_1,\cdots,Px_n)\|.
\end{align}
Let $V:=1+q P^{\ve-1}\max\{1,HP^2|\z|\}^{1/2}$, $\ve>0$, and $N\in\mathbb{N}$. Then 
\begin{align*}
    I(\z;q^{-1}\underline{m})\ll_N P^{-N}+ \meas(\{\underline{y}\in P\,\supp(\omega_{\underline{h}}):\: |\nabla \hat{F}_{\z}(\underline{y})-\underline{m}|\leq V\}),
\end{align*}
where
\[\hat{F}_{\z}(\x):=qP^{-1}z_1F(\x)+qP^{-1}z_2G(\x).\]
Furthermore, if $|\underline{m}|\geq q P^{\epsilon-1}\max\{1,HP^2|\z|\}$, then we have
\[I(\z;q^{-1}\underline{m})\ll_N P^{-N}|\underline{m}|^{-N}.\]
\end{lemma}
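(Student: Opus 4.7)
The plan is the standard non-stationary phase argument: smoothly partition the amplitude according to whether the gradient of the phase is close to its approximate critical locus, and handle the two pieces separately. Writing the phase as $\Phi(\x) = z_1F(\x) + z_2G(\x) - q^{-1}\m\cdot\x$, one computes
$$q\,\nabla\Phi(\x) = P\,\nabla\hat{F}_{\z}(\x) - \m,$$
so the oscillation of $e(\Phi(\x))$ is governed by the vector $\nabla\hat{F}_{\z}(\x)$ measured against $\m$. The condition $|\nabla\hat{F}_{\z}(\y) - \m| \leq V$ appearing in the statement thus encodes, modulo the $P$-factor absorbed into $V$, the failure of non-stationary phase on that part of the support.

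Fix a smooth cutoff $\chi\in C^\infty_c(\R^n)$ equal to $1$ on $|\y|\leq 1$ and supported in $|\y|\leq 2$, and decompose $I(\z;q^{-1}\m) = I_1 + I_2$ by multiplying $\omega(\x/P)$ by $\chi((\nabla\hat{F}_{\z}(\x)-\m)/V)$ and by its complement. The piece $I_1$ is bounded trivially in absolute value by the measure of the set in the statement (up to an absolute dimensional constant, absorbing the factor $2$ in the support of $\chi$). For $I_2$, the amplitude is supported where $|\nabla\hat{F}_{\z}(\x)-\m| \geq V$, which translates to $|\nabla\Phi(\x)| \gg V/q$. We apply $N$ iterations of the standard integration-by-parts operator
$$L^*f := -\frac{1}{2\pi i}\nabla\cdot\!\left(\frac{\nabla\Phi}{|\nabla\Phi|^2}\,f\right),$$
so that $I_2 = \int (L^*)^N(\omega_2)\,e(\Phi)\,d\x$. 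Since $F,G$ are quadratic, $|\nabla^2\Phi|\ll |\z|H$ and $\nabla^k\Phi = 0$ for $k\geq 3$; moreover $|\nabla^k\omega(\cdot/P)|\ll P^{-k}$ and $|\nabla^k\chi((\nabla\hat{F}_{\z}-\m)/V)|\ll (\|\nabla^2\hat{F}_{\z}\|/V)^k$. A routine induction then bounds $(L^*)^N(\omega_2)$ pointwise by $|\nabla\Phi|^{-N}$ times a product of $N$ factors drawn from $\{P^{-1}(q/V),\; |\z|H(q/V)^2,\; q\|\nabla^2\hat{F}_{\z}\|/V^2\}$. The definition $V = 1 + qP^{\varepsilon-1}\max\{1,(HP^2|\z|)^{1/2}\}$ is engineered precisely so that each of these three ratios is $\ll P^{-\varepsilon}$; taking $N$ large forces $|I_2|\ll_N P^{-N}$.

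For the sharper second bound, assume $|\m|\ge qP^{\varepsilon-1}\max\{1, HP^2|\z|\}$. Since $|\nabla\hat{F}_{\z}(\x)|\ll q|\z|H$ on $\supp(\omega(\cdot/P))$ (using $\|F\|_P,\|G\|_P\ll H$), the hypothesis forces $|P\,\nabla\hat{F}_{\z}(\x)|\leq |\m|/2$, and hence $|q\,\nabla\Phi(\x)|\geq |\m|/2$ uniformly on the support. No partition of unity is needed: one applies $(L^*)^N$ directly, and with $|\nabla\Phi|\gg |\m|/q$ the governing ratios become $P^{-1}q/|\m|$ and $|\z|Hq^2/|\m|^2$, both of which are $\ll P^{-\varepsilon}/|\m|$ under the hypothesis. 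Iterating $N$ times yields $|I|\ll_N P^{-N}|\m|^{-N}$. The main technical step is the careful inductive bookkeeping for $(L^*)^N$: one must simultaneously track derivatives of $\omega(\cdot/P)$, of the cutoff $\chi((\nabla\hat{F}_{\z}-\m)/V)$, and of $|\nabla\Phi|^{-2}$, and verify that the definition of $V$ balances all three scales — the square root $(HP^2|\z|)^{1/2}$ in $V$ being exactly what equalises the $P^{-1}$ cost of an amplitude derivative against the $|\z|H$ size of the phase Hessian.
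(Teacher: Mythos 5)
Your approach --- a smooth partition of unity around the approximate stationary set followed by iterated non-stationary-phase integration by parts --- is precisely the argument behind the result the paper invokes: the paper gives no proof beyond citing \cite[Lemmas 6.5--6.6]{Browning-Dietmann-Heath-Brown} and noting that the quantity $\Theta$ there becomes $\Theta'=1+|z_1|HP^2+|z_2|HP^2$, and your identification of the three ratios that the definition of $V$ is engineered to control is the right accounting. Two points in your write-up do not close as stated, though both are repairable. First, you correctly compute $q\nabla\Phi(\x)=P\nabla\hat F_{\z}(\x)-\m$, but you then build the cutoff on $(\nabla\hat F_{\z}(\x)-\m)/V$; on the support of the complementary piece the inequality $|\nabla\hat F_{\z}(\x)-\m|\ge V$ does \emph{not} imply $|\nabla\Phi(\x)|\gg V/q$ (consider $\nabla\hat F_{\z}(\x)$ near $\m/P$ with $|\m|$ of moderate size), so the integration by parts on $I_2$ is not justified as written. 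The decomposition must be made according to $|P\nabla\hat F_{\z}(\x)-\m|\le V$, i.e.\ $|\nabla\Phi|\le V/q$; this proves the lemma with centre $\m_0(\y)=P\nabla\hat F_{\z}(\y)$ rather than $\nabla\hat F_{\z}(\y)$, which is all that the downstream application (Proposition \ref{P: T_h(q,z): bound in terms of V sum}, where only the existence of \emph{some} centre $\m_0(\y)$ is used) requires. Second, in the tail bound your claim that $P^{-1}q/|\m|\ll P^{-\ve}/|\m|$ needs $q\ll P^{1-\ve}$, which is not assumed ($q$ runs up to $Q\asymp P^{3/2}$). The hypothesis $|\m|\ge qP^{\ve-1}\max\{1,HP^2|\z|\}$ gives that each ratio is $\le P^{-\ve}$, and separately that each ratio is $\ll qP^{-1}|\m|^{-1}\ll P^{1/2}|\m|^{-1}$; taking $M$ integrations by parts with $M$ large in terms of $N$, $n$ and $\ve$, and using $|\m|\ge P^{\ve-1}$ to absorb the resulting positive powers of $P$ into the surplus $P^{-\ve M}$ saving, recovers the stated bound $\ll_N P^{-N}|\m|^{-N}$.
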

The proof of this is almost identical to the proofs of \cite[Lemma 6.5-6.6]{Browning-Dietmann-Heath-Brown}, and so we will not detail it here. In particular, the only thing in the proofs that needs to be tweaked in order to verify Lemma \ref{E620} is that $\Theta$ in \cite[equation (6.11)]{Browning-Dietmann-Heath-Brown} must be replaced with 
\[\Theta':=1+|z_1|HP^2+|z_2|HP^2.\]
We also note that we use $|\nabla \hat{F}_{\z}(\underline{y})-\underline{m}|\leq V$ instead of $Pq^{-1}|\nabla \hat{F}_{\z}(\underline{y})-\underline{m}|\leq Pq^{-1}V$ since we are using slightly different notation.   

The latter bound enables us to handle the tail of the sum over $\underline{m}$. Let
\\$\hat{V}:=q P^{\epsilon-1}\max\{1,HP^2|\z|\}$. By trivially bounding $|S(q;\underline{m})|$ by $q^n$, and setting $N\geq n+2$, it is easy to show that
\[q^{-n}\sum_{|\underline{m}|\gg \hat{V}} |S(q;\underline{m})|\cdot |I(\z;q^{-1}\underline{m})|\ll 1,\]
by the second half of Lemma \ref{E620}. Hence,
\[\implies |T_{\underline{h}}(q,\z)|\ll 1+q^{-n}\sum_{|\underline{m}|\ll \hat{V}} |S(q;\underline{m})|\cdot |I(\z;q^{-1}\underline{m})|.\]
Now by the first half of Lemma \ref{E620} (setting $N\geq n+4$), we have 
\begin{align*}
    |T_{\underline{h}}(q,\z)|&\ll 1+q^{-n}\sum_{|\underline{m}|\ll \hat{V}} |S(q;\underline{m})|\cdot \meas(\{\underline{y}\in P\,\supp(\omega):\: |\nabla \hat{F}_{\z}(\underline{y})-\underline{m}|\leq V\}\\
    &= 1+q^{-n}\sum_{|\underline{m}|\ll \hat{V}} |S(q;\underline{m})|\int_{\underline{y}\in P\,\supp(\omega)} \Char_G(\underline{m},\underline{y})\; d\underline{y},
\end{align*}
where 
\[\Char_G(\underline{m},\underline{y})=\begin{cases}1 \;\mbox{ if }\; |\nabla \hat{F}_{\z}(\underline{y})-\underline{m}|\leq V\\ 0 \;\mbox{ else.}\end{cases}\]
\begin{align*}
    \implies |T_{\underline{h}}(q,\z)|&\ll 1+q^{-n}\int_{\underline{y}\in P\,\supp(\omega)}\sum_{\substack{|\underline{m}|\ll \hat{V}\\|\nabla \hat{F}_{\z}(\underline{y})-\underline{m}|\leq V}}|S(q;\underline{m})| \;d\underline{y}\\
    &\ll 1+q^{-n}\int_{\underline{y}\in P\,\supp(\omega)}\sum_{|\underline{m}-\underline{m}_0(\underline{y})|\leq V}|S(q;\underline{m})| \;d\underline{y}.
\end{align*}
where $\m_0(\underline{y}):=\nabla \hat{F}_{\z}(\underline{y})$. Hence, we have the following:
\Pb
\label{P: T_h(q,z): bound in terms of V sum}
Let $|\z|=\max\{|z_1|,|z_2|\}$. Then for any $q\in \N$,
\begin{equation*}
        |T(q,\z)|\ll 1+q^{-n}\max_{\underline{y}\in P\,\supp(\omega)}\Big{\{}\sum_{|\underline{m}-\underline{m}_0(\y)|\leq V}|S(q;\underline{m})| \Big{\}}.
\end{equation*}
for some $\m_0(\y)$, where
\begin{equation*}
    \label{eq: V defn}
    V:=1+q P^{-1+\ve}\max\{1,HP^2|\z|\}^{1/2}.
\end{equation*}
\Pe

Our attention now turns to finding a suitable bound for $|S(q;\m)|$. As is standard when dealing with exponential sum bounds, we will take advantage of the multiplicative property of $S(q;\m)$ and decompose $q$ into its square-free, square, and cube-full components so that we can use better bounds in the former two cases (in particular, we will make use of the $\ua$ sum to improve our bounds in the former cases). Indeed, we may use a Lemma of Hooley \cite[Lemma 3.2]{Hooley78} to get the following result. 

\begin{lemma}
\label{T63}
Let $\underline{a}\in\mathbb{Z}^2$ s.t. $(q,\underline{a})=1$, $q=rs$ where $(r,s)=1$ and $\underline{m}\in\mathbb{Z}^n$. Then
\begin{equation}
\label{eq:multiplicativity}
S(rs; \underline{m})=S(r; \xoverline{s}\underline{m})S(s;\xoverline{r}\underline{m}),
\end{equation}
where $r\xoverline{r}+s\xoverline{s}=1$.
\end{lemma}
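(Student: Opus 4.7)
The plan is to establish multiplicativity of $S(q;\underline{m})$ by a direct application of the Chinese Remainder Theorem, followed by a standard twisting of the $\underline{a}$-sum. The computation is essentially bookkeeping, so the main task is to track carefully how the two moduli interact with the coprimality condition and the character.

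First I will invoke the CRT bijection
\[(\mathbb{Z}/rs\mathbb{Z})^n \longleftrightarrow (\mathbb{Z}/r\mathbb{Z})^n \times (\mathbb{Z}/s\mathbb{Z})^n,\]
writing $\underline{u}\equiv \underline{v}_1\pmod r$, $\underline{u}\equiv \underline{v}_2\pmod s$, and similarly $\underline{a}\equiv \underline{b}_1\pmod r$, $\underline{a}\equiv \underline{b}_2\pmod s$. Since $\underline{a}\in\mathbb{Z}^2$, the coprimality condition $(\underline{a},rs)=1$ decouples as $(\underline{b}_1,r)=1$ and $(\underline{b}_2,s)=1$. Because $F$ and $G$ have integer coefficients, $F(\underline{u})\equiv F(\underline{v}_1)\pmod r$ and $F(\underline{u})\equiv F(\underline{v}_2)\pmod s$, and analogously for $G$.

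Next, from $r\br+s\bs=1$ I derive the character identity $e_{rs}(x)=e_r(\bs x)\,e_s(\br x)$, and factor
\[e_{rs}\bigl(a_1F(\underline{u})+a_2G(\underline{u})+\underline{m}\cdot\underline{u}\bigr) = e_r\bigl(\bs\bigl[a_1F(\underline{u})+a_2G(\underline{u})+\underline{m}\cdot\underline{u}\bigr]\bigr)\, e_s\bigl(\br\bigl[\cdots\bigr]\bigr).\]
Reducing the argument of $e_r$ modulo $r$ replaces $\underline{u}$ by $\underline{v}_1$ and $\underline{a}$ by $\underline{b}_1$, and similarly reducing modulo $s$ in the second factor. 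Combined with the CRT factorisation of the index sets, the full sum splits as a product of a sum over $(\underline{b}_1,\underline{v}_1)$ modulo $r$ and a sum over $(\underline{b}_2,\underline{v}_2)$ modulo $s$.

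Finally, in the $r$-factor I substitute $\underline{b}_1\mapsto \bs\,\underline{b}_1\pmod r$, which is a bijection on $\{\underline{b}_1\bmod r:(\underline{b}_1,r)=1\}$ since $(\bs,r)=1$. This substitution absorbs the twist $\bs$ into the coefficients multiplying $F$ and $G$, leaving only an unabsorbed twist $\bs\underline{m}$ in the linear term, so the $r$-factor becomes exactly $S(r;\bs\underline{m})$. The identical manipulation in the $s$-factor produces $S(s;\br\underline{m})$, yielding \eqref{eq:multiplicativity}. I do not anticipate any genuine obstacle; the one point requiring care is that the single substitution $\underline{b}_1\mapsto\bs\underline{b}_1$ must rescale both components of $\underline{b}_1=(b_{1,1},b_{1,2})$ simultaneously, which is legitimate because the same scalar $\bs$ pre-multiplies $a_1$ and $a_2$ inside the character.
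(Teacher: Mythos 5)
Your proof is correct and is exactly the standard CRT-plus-twist argument that the paper alludes to (it skips the proof, citing that it is "a very standard argument akin to" Lemma~10 of Browning--Heath-Brown and Lemma~4.5 of Marmon--Vishe). One small notational caveat: the substitution you denote $\b_1\mapsto\bs\,\b_1$ should be read as introducing the new summation variable $\bs\,\b_1$ (equivalently, replacing $\b_1$ by $\bs^{-1}\b_1$ in the summand), which is the sense that actually cancels the $\bs$ in front of $F$ and $G$ and leaves only the residual twist $\bs\m$ in the linear term.
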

The above lemma is proved using a very standard argument akin to \cite[Lemma 10]{Browning-Heath-Brown09} and \cite[Lemma 4.5]{Marmon_Vishe}, and therefore we will skip its proof here.
Our treatment of bounds for the quadratic exponential sums will vary depending on whether $q$ is square-free, a square or cube-full. Since the exponential sums satisfy the mutliplicativity relation \eqref{eq:multiplicativity}, it is natural to set $q=b_1b_2q_3$ where 
\begin{equation}
    \label{E61}
    b_1:=\prod_{p||q}p,\quad b_2:=\prod_{p^2||q}p^2,\quad q_3:=\prod_{\substack{p^e||q\\e>2}}p^e.
\end{equation}
Then by Lemma \ref{T63}, we have that 
\begin{equation}
    \label{E62}
    S(q; \underline{m})=S(b_1; c_1\underline{m})S(b_2; c_2\underline{m})S(q_3; c_3\underline{m}),
\end{equation}
for some constants $c_1,c_2,c_3$ such that $(b_1,c_1)=(b_2,c_2)=(q_3,c_3)=1$. Finding suitable bounds for the size of these three exponential sums will be the topic of the rest of this section. 

\subsection{Square-free Exponential Sums}
\label{S: sq. free exp. sums}
In this section, we will briefly consider the quadratic exponential sums  $S(b_1;\m)$ when $q=b_1$ is square-free. This case is extensively studied in \cite[Section 5]{Marmon_Vishe}, where bounds are obtained for exponential sums for a general system of polynomials $F$ and $G$. Using the multiplicativity of the exponential sum in \eqref{eq:multiplicativity}, it is enough to consider the sums $S(p,\m)$ where $p$ is a prime. We may rewrite
\begin{equation}\label{eq:Spm}
S(p,\m)=\Sigma_1-\Sigma_4,
\end{equation}
  where 
\begin{equation}
\label{eq:Sigma14def}
\Sigma_1:=\sum_{a_1=1}^p\sum_{a_2=1}^p \sum_{\underline{u} \bmod{q}} e_p(a_1F(\underline{u})+a_2G(\underline{u})+\underline{m}\cdot\underline{u})\quad\textrm{ and }\quad\Sigma_4:=\sum_{\underline{u} \bmod{q}} e_p(\m\cdot\u).
\end{equation}
Here the notation $\Sigma_1$ and $\Sigma_4$ is used to correspond to the corresponding sums in \cite[Section 5]{Marmon_Vishe}. Note that the argument in \cite[Section 5]{Marmon_Vishe} does not depend on the degree of the forms $F$ and $G$. In fact our exponential sums are more ``natural" than the ones which appear in \cite{Marmon_Vishe} and as a result, only sums $\Sigma_1$ and $\Sigma_4$ appear in our analysis. We may now use the results in \cite[Section 5]{Marmon_Vishe} directly here as they do indeed bound the sums $\Sigma_1$ and $\Sigma_4$ as well, but only in the case where $F$ and $G$ intersect properly over $\overline{\FF}_p$. When $n\geq 2$, we may use \cite[Prop 5.2, Lemma 5.4]{Marmon_Vishe} to get
\Pb
\label{P:sq free:n>1}
Let $F,G\in \Z[x_1,\cdots,x_n]$ be quadratic polynomials such that\\
$s_{\infty}(\Fnull,\Gnull)=-1$. Let $b_1$ be a square-free number where 
\[(b_1, \cont(\Fnull))=(b_1, \cont(\Gnull))=1,\]
If $n>1$, then there exists some $\Phi_{F,G}=\Phi\in \Z[x_1,\cdots,x_n]$ such that
\[S(b_1,\un{m})\ll_n b_1^{1+n/2+\ve}D(b_1)(b_1,\Phi(\m))^{1/2}\]
for every $\un{m}\in\Z^n$. Furthermore $\Phi$ has the following properties:
\begin{enumerate}
    \item $\Phi$ is homogeneous.
    \item $\deg(\Phi)\ll_n 1$.
    \item $\log ||\Phi||\ll_n \log ||F|| + \log ||G||$.
    \item $\cont(\Phi)=1$.
\end{enumerate}
\Pe
\begin{proof}
To begin, since $s_{\infty}(F,G)=-1$, we may use an explicit description of the dual variety as seen by a $\QQ$-version of \cite[Lemma 4.2]{Vishe19} to see that the polynomial defining the dual variety of the intersection variety of $F,G$ satisfies the four conditions for $\Phi$ in the statement of this proposition. Hence, we will may let $\Phi$ be this polynomial. This allows us to improve the first assertion of \cite[Proposition 5.2]{Marmon_Vishe}: Indeed, if we let $\delta_p(\uv):=s_p(F,G,L_{\uv})$, where $L_{\uv}$ is the hyperplane defined by $\uv$, then $\delta_p(\uv)\leq s_p(F,G)$ whenever $p\nmid \Phi(\uv)$. We automatically get this since 
\[s_p(F,G,L_{\uv})\leq s_p(F,G)\]
for every $\uv$ not on the dual variety of $F,G$ (over $\overline{\FF}_p$), and we must have $p\mid \Phi(\uv)$ when $\uv$ is on the dual variety by our choice of $\Phi$. Hence, in the case when $F,G$ intersect properly over $\overline{\FF}_p$, \cite[Lemma 5.4]{Marmon_Vishe} (and our improvement to \cite[Proposition 5.2]{Marmon_Vishe}) hands us
\begin{equation}
    \label{eq: sq free: proper intersections bound}
    S(p,\un{m})\ll_n p^{1+n/2+\ve}p^{(s_p(F,G)+1)/2}(p,\Phi(\m))^{1/2}=p^{1+n/2+\ve}D(p)^{1/2}(p,\Phi(\m))^{1/2}.
\end{equation}
A major difference here with \cite{Marmon_Vishe} is that both of our forms $F$, $G$ vary as $\h$ varies, and this forces us to consider the case when $F$ and $G$ intersect improperly in greater detail. We will firstly show that 
\[|\Sigma_1|\ll p^2D(p)\]
in this case. To do this, we start by noting that
\begin{align}
    \label{eq: square free: sigma_1 rewritten}
    |\Sigma_1|&=\big{|}p^2\sum_{\substack{\x\bmod{p}\\F(\x)\equiv G(\x)\equiv 0 \bmod{p}}} e_p(\m\cdot\x)\Big{|}\nonumber\\
              &\leq p^2 \#\{\x\in \FF_p^n \: : \: F(\x)=G(\x)=0\}.
\end{align}
In the case where $n>1$ and $F,G$ intersect improperly over $\overline{\FF}_p$, there are three cases to consider: $F\equiv 0$ (or $G\equiv 0)$, $F\equiv \lambda G$ for some $\lambda\in \overline{\FF}$, and $F\equiv L_1L_2$, $G\equiv L_1L_3$ for some hyperplanes $L_i$ such that $L_i\not \equiv 0$, $L_2\not \equiv \lambda L_3$. For the first two cases, we note that by definition of $\Sing_p(F,G)$ (see \eqref{eq:singlocus}), we have
\[\Sing_p(F,G)=\{\x\in \PP_{\overline{\FF}_p}^{n-1} \: : \: F(\x)=G(\x)=0\}\]
since $\rank(\nabla F(\x), \nabla G(\x))<2$ is automatically true when $F\equiv 0$, $G\equiv 0$, or $F\equiv \lambda G$. In particular, we see that $\{\x\in \FF_p^n \: : \: F(\x)=G(\x)=0\}$ is a subset of the affine singular locus of $F,G$ (over $\overline{\FF}_p$). Therefore
\[\#\{\x\in \FF_p^n \: : \: F(\x)=G(\x)=0\}\ll p^{\dim\Sing_p(F,G)+1}= p^{s_p(F,G)+1}=D(p).\]
Similarly when $F\equiv L_1L_2$, $G\equiv L_1L_3$,
\begin{align*}
\Sing_p(F,G)=\{L_1(\x)=0\}\cup \{L_2(\x)=L_3(\x)=0, \quad \rank(L_1(\x)\uv_2,\: L_1(\x) \uv_3)<2\}
\end{align*}
where $L_i(\x)=\uv_i\cdot \x$. However since $L_2\not \equiv \lambda L_3$, $\rank(L_1(\x)\uv_2,\: L_1(\x) \uv_3)<2$ if and only if $L_1(\x)=0$, which implies that
\[\Sing_p(F,G)=\{\x\in \PP_{\overline{\FF}_p}^{n-1} \: : \: L_1(\x)=0\}.\]
We therefore see that
\begin{align*}
\#\{\x\in \FF_p^n \: : \: F(\x)=G(\x)=0\}&= \#\{\x\in \overline{\FF}_p^n \: : \: L_1(\x)=0\} + \#\{\x\in \overline{\FF}_p^n \: : \: L_2(\x)=L_3(\x)=0\}\\
    &\leq 2\, \#\{\x\in \overline{\FF}_p^n \: : \: L_1(\x)=0\}\\
&\ll p^{s_p(F,G)+1}\\
&=D(p).
\end{align*}
Hence by \eqref{eq: square free: sigma_1 rewritten}, when $F$, $G$ intersect improperly, we have
\[|\Sigma_1|\ll p^2D(p)\leq p^{1+n/2}D(p),\]
provided that $n\geq 2$, as required. Therefore, we may conclude that for a general $p$ (irrespective of whether or not the intersection is proper)
\[S(p,\un{m})\leq C(n) p^{1+n/2+\ve}D(p)(p,\Phi(\m))^{1/2},\]
where $C$ is some constant. Finally by Lemma \ref{T63}, we have
\begin{align*}
    S(b_1,\un{m})&=\prod_{p\mid b_1} S(p,c_p\m)\\
    &\leq C(n)^{d(b_1)} b_1^{1+n/2+\ve}D(b_1)\prod_{p\mid b_1}(p,\Phi(c_p\m))^{1/2}\\
    &=C(n)^{d(b_1)} b_1^{1+n/2+\ve}D(b_1)(b_1,\Phi(\m))^{1/2},
\end{align*}
where $d(b_1):=\#\{p\mid b_1\}$ is the divisor function of $b_1$. We could replace $(p,\Phi(c_p\m))$ with $(b_1,\Phi(\m))$ because $\Phi$ is homogeneous and $(p,c_p)=1$. All that is left to do is show that $C(n)^{d(b_1)}$ does not contribute more than $O(P^{\ve})$. To see this, we note that $d(b_1)\ll \log(b_1)/\log\log(b_1)$. Hence there is some constant $d$ such that
\begin{align*}
    C(n)^{d(b_1)}&\leq C(n)^{d\log(b_1)/\log\log(b_1)}\ll b_1^{d\log(C(n))/\log\log(b_1)}\ll b_1^{\ve}
\end{align*}
provided that $b_1\gg_{\ve} 1$. We automatically have $d(b_1)\ll 1$ if $b_1\not\gg 1$, so we get $c^{d(b_1)}\ll 1\ll b_1^{\ve}$ in that case. Hence, we may conclude that Proposition \ref{P:sq free:n>1} is true. We will bound the $C(n)$ term in future lemmas by $b_1^{\epsilon}$ without further comment.
\end{proof}

We also must consider when $n=1$. In this case, it is sufficient for us to use a weaker bound than \cite[Lemma 5.5]{Marmon_Vishe}. We will show the following:

\Pb
\label{P:sq free:n=1}
Let $F,G\in \Z[x]$ be quadratic polynomials and let $b_1$ be a square-free integer. Then
\[S(b_1,m)\ll b_1^{2+\ve}D(b_1).\]
\Pe
\begin{proof}
The proof of Proposition \ref{P:sq free:n=1} is almost trivial. We start by applying Lemma \ref{T63} so that we may consider $S(p; cm)$ for some $p\nmid c$. We note that
\[|\Sigma_1|= p^2 \#\{x \mod p \: :\: F(x)\equiv G(x) \equiv 0 \mod p\}\ll p^2(p,\cont(F),\cont(G)),\]
and we trivially have $|\Sigma_4|\leq p$. Hence, by \eqref{eq:Ddef1} and noting that\\
$(p,\cont(F),\cont(G))\leq (p,\cont(F^{(0)}),\cont(G^{(0)}))$:
\[|S(p;cm)|\leq |\Sigma_1|+|\Sigma_4|\ll p^2D(p),\]
and so
\[|S(b_1;m)|\ll b_1^{2+\ve} D(b_1)\]
for any $m\in \Z$.
\end{proof}

\subsection{Square-full Bound}\label{sec:sqfullexp}
In this section, we will derive the bound which will be used when $q$ is square-full. When $q$ is square-full, we give up on saving $q$ over the $\ua$ sum, and instead start with the bound
\begin{equation}
    \label{eq: S(q,m): bound w/ S(a,q,m)}
    |S(q;\m)|\leq \starsum_{\ua}^q |S(\ua,q;\m)|
\end{equation}
where $F,G$ are quadric polynomials, and
\[S(\un{a}, q; \underline{m}):=\sum_{\underline{x} \bmod{q}} e_{q}(a_1F(\underline{x})+a_2G(\underline{x})+\underline{m}\cdot\underline{x}).\]
For a fixed value of $\ua$, the exponential sum $S(\un{a}, q; \underline{m})$ is a standard quadratic exponential sum with leading quadratic part defined by the matrix
\begin{equation}
\label{eq:Mdef}
M(\ua):=M:=a_1M_1+a_2M_2.
\end{equation}
A standard squaring argument as obtained in \cite[Lemma 2.5]{Vishe19} for example readily hands us a bound
\begin{equation}
\label{eq:sqfullb1}
|S(\un{a}, q; \underline{m})|\ll q^{n/2}\#\Null_q(M)^{1/2},
\end{equation}
where $\#\Null_q(M)$ denotes the number of solutions of the equation $M\x\equiv\vecnull\bmod{q}$ as defined in \eqref{eq:NullMdef}. To estimate this, we will resort to using a Smith normal form of the matrix $M$. The Smith normal form of $M$ hands us invertible integer matrices $S$ and $T$ be with determinant $\pm1$ such that 
\begin{equation}
\label{eq:SmithMdef}
SMT=\Smith(M)=\begin{pmatrix} \lambda_1 & 0 & 0 &\cdots&0\\
0 & \lambda_2 & 0 & \cdots&0 \\
0 & 0 & \ddots & & \vdots\\
\vdots &\vdots & &\ddots\\ 
0&0 &\cdots & & \lambda_n \end{pmatrix}\in M_n(\mathbb{Z}),
\end{equation}
where $\lambda_1\mid \lambda_2\mid\cdots\mid \lambda_n$. Since the forms $\Fnull$ and $\Gnull$ are assumed to be arbitrary for now, it is easy to conclude that
\begin{equation}
\label{eq:sqfullb2}
|S(\un{a}, q; \underline{m})|\ll q^{n/2}\prod_{i=1}^n \lambda_{q,i}^{1/2},
\end{equation}
where 
\begin{equation}
\label{eq:lambdaqidef}
\lambda_{q,i}:=(q,\lambda_i).
\end{equation}
\begin{remark}Recall that we aim to finally substitute $F=F_\h$ and $G=G_\h$. Note that the extra factor appearing on the right hand side of \eqref{eq:sqfullb2} is a generalisation of the factor $D(b_1)^{1/2}$ appearing in Proposition \ref{P:sq free:n>1}. This is a drawback of van der Corput differencing that although one starts with a {\em nice} pair of forms $F$ and $G$, one ends up with exponential sums of differenced polynomials $F_\h$ and $G_\h$, which can be highly singular modulo $q$. If $q=p^\ell$ for some prime $p$, if the singular locus $s_p$ as defined in \eqref{eq:mpdef} is large, then this gives restrictions on the vector $\h\bmod{p}$. When $\ell$ is small,
 the extra factors appearing can be compensated from the corresponding bounds on the $\h$ sum. However, in the case at hand, when $q=p^\ell$ for a large $\ell$, we can not rule out the possibility that for many $\h$, there may exist a large $q$ such that the factor $\prod_{i=1}^n \lqi^{1/2}$ is as large as $q^{n/2}$. This complication arises partly due to the simplicity of the quadratic exponential sums appearing. However, later we would need to average the sums over various $|\m-\m_0|\leq V$. We will aim to salvage some of this loss by gaining a congruence condition on $\m$ instead and saving from the sum over $\m$. This idea partly has already featured in Vishe's work \cite[Lemma 6.4]{Vishe19}. However, in \cite{Vishe19}, the authors are dealing with fixed $F$ and $G$, which is not the case here.
\end{remark}

Our main goal here is to prove the following result:

\Pb
\label{T600}
Let $\ua\in \Z^2$ and $q\in N$ be such that $(\ua,q)=1$, let $\m\in\Z^n$, and let $F,G$ be quadratic polynomials. Let
\begin{equation}
\label{eq:a_1F+a_2G}
(a_1F_1+a_2F_2)(\x)=\x^t M\x+\un{\mathfrak{b}}\cdot\x+\mathfrak{c}.
\end{equation}
(We use $\un{\mathfrak{b}}$ instead of $\b$ to avoid confusion since we have already defined $b_1$, $b_2$, $b_3$). Then 
    \[|S(\un{a}, q; \underline{m})|\leq 2^{n/2}q^{n/2}\#\Null_q(M)^{1/2}\Delta_{q}(\un{m}+\un{\mathfrak{b}})\]
where 
\begin{equation}
    \label{E6001}
    \Delta_q(\m):=\Delta_{T,q}(\un{m}):=\begin{cases} 1  \;\;\mbox{ if } \lambda_{q,i} \mid (T^t\un{m})_i\textrm{ for } 1\leq i \leq n\\ 0 \;\;\mbox{  else. }\end{cases}
\end{equation}
Here,  $T$ be the matrix appearing in the Smith normal form of $M$ in \eqref{eq:SmithMdef}, $\lambda_{q,i}$ be as in \eqref{eq:lambdaqidef} and given a vector $\v$, let $(\v)_i$ denote its $i$-th component.
\Pe
\begin{proof}
To estimate $|S(\ua,q;\m)|$, we begin by working with its square: 
\begin{align*}
|S(\un{a},q,\underline{m})|^2&= \sum_{\underline{x},\underline{y} \bmod{q}} e_{q}((a_1F_1+a_2F_2)(\x)+\underline{m}\cdot\underline{x})\overline{e_{q}((a_1F_1+a_2F_2)(\y)+\underline{m}\cdot\underline{y})}\\
                                   &= \sum_{\underline{x},\underline{y} \bmod{q}} e_{q}(\x^t M\x-\y^t M\y+(\underline{m}+\un{\mathfrak{b}})\cdot(\underline{x}-\underline{y})).
\end{align*}
We will now change order of summation by setting $\underline{x}=\underline{y}+\underline{z}$. Then
\begin{align*}
 |S(\un{a},q,\underline{m})|^2&= \sum_{\underline{y},\underline{z} \bmod{q}} e_{q}(\z^t M\z+(\underline{m}+\un{\mathfrak{b}})\cdot\z +2\y^t M \z)\\
                                               &=\sum_{\underline{z} \bmod{q}} e_{q}(\z^t M\z+\underline{m}'\cdot\underline{z})\sum_{\underline{y} \bmod{q}} e_{q}(\underline{y}\cdot 2M\z).\\
\end{align*}
where $\un{m}'=\un{m}+\un{\mathfrak{b}}$. Therefore
\begin{equation}
    \label{eq: s(a,q,m): delta M_F : 2}
    |S(\un{a},q,\underline{m})|^2=q^{n}\sum_{\underline{z} \bmod{q}} e_{q}(\z^t M \z+\underline{m}'\cdot\underline{z})\delta_{2M}(\z),
\end{equation}
where
\begin{equation}
    \label{eq: delta M_F: defn}
    \delta_{M}(\z):=\begin{cases} 1 \;\;\mbox{ if } M\underline{z}\equiv 0\mod q\\ 0 \;\;\mbox{  else. }\end{cases}
\end{equation}
The ``2" appearing in $\delta_{2M}(\z)$ gives rise to some minor technical difficulties in the case when $q$ is even. Therefore, we will start by considering the case when $q$ is odd first. 

\subsubsection{Case: $q$ odd} In this case, $\delta_{2M}(\z)=1$ if and only if $M\z\equiv \un{0} \mod{q}$, and so we may replace $\delta_{2M}(\z)$ in \eqref{eq: s(a,q,m): delta M_F : 2} by $\delta_M(\z)$. Furthermore, we note that $M\z\equiv \un{0} \mod{q}$ implies that $\z^t M\z\equiv \un{0} \mod{q}$. Hence \eqref{eq: s(a,q,m): delta M_F : 2} simplifies as:

\begin{equation}
    \label{eq: s(a,q,m): delta M_F simplified: 2.5}
    |S(\un{a},q,\underline{m})|^2=q^{n}\sum_{\underline{z} \bmod{q}} e_{q}(\underline{m}'\cdot\underline{z})\delta_{M}(\z).
\end{equation}

Now, $M$ has a Smith Normal form over $\mb{Z}$ as in \eqref{eq:SmithMdef}, $\Smith(M):=SMT$, for some matrices $S,T\in SL_n(\mb{Z})$. In particular, matrices $S$ and $T$ are invertible over $\mb{Z}/q\mb{Z}$, for any $q\in\mathbb{N}$. We will now rewrite our sum in terms of the $\Smith(M)$, Firstly, we note that 
\[\delta_M=\delta_{SM}.\]
Therefore, on using the substitution $\z\mapsto T^{-1}\z$, \eqref{eq: s(a,q,m): delta M_F : 2} becomes
\begin{equation}
    \label{eq: s(a,q,m): delta Smithed : 3}
    |S(\un{a},q,\underline{m})|^2=q^{n}\sum_{\underline{z} \bmod{q}} e_{q}(\underline{m}'\cdot T\underline{z})\delta_{SMT}(\z),
\end{equation}
since $\delta_{SM}(T\z)=\delta_{SMT}(\z)$ by \eqref{eq: delta M_F: defn}.
We will now work towards determining which $\z$ make $\delta_{SMT}(\z)$ non-zero. By definition, $\delta_{SMT}(\z)\neq 0$ if and only if
\[SMT\z\equiv \un{0} \mod q,\]
or equivalently
\[\z\in \Null_q(SMT):=\{\x\in \big{(}\mb{Z}/q\mb{Z}\big{)}^n\:\mid \: SMT\x \equiv \un{0} \mod q\}.\] 
Hence, we may simplify \eqref{eq: s(a,q,m): delta Smithed : 3} as follows:
\begin{align}
    |S(\un{a},q,\underline{m})|^2&=q^{n}\sum_{\z\in \Null_q(SMT)} e_{q}(\underline{m}'\cdot T\underline{z})\nonumber\\
                                \label{eq: s(a,q,m): z in Null(SMT) : 4}
                                 &=q^{n}\sum_{\z\in \Null_q(SMT)} e_{q}(\underline{z}\cdot T^t\underline{m}')
\end{align}
where $T^t$ is the transpose of $T$. This is true because 
\[\un{m}'\cdot T\z=(T \z)^t\m'=\z^t T^t\m'=\un{z}\cdot T^t\un{m}'.\]
We now turn our attention to structure of the $\Null_q(SMT)$. Since $S$ and $T$ are defined to be the unique matrices (up to units) such that $SMT=\Smith(M)$, it is quite easy to determine precisely when $\uz\in \Null_q(SMT)$. Therefore $SMT\z\equiv \un{0} \mod q$ if and only if 
\begin{equation}
    \label{eq: z in Null(SMT)}
    \frac{q}{\lambda_{q,i}}\:\Big{|}\: z_i
\end{equation}
for every $i\in\{1,\cdots, n\}$. Therefore
\begin{equation}
\label{eq: Null = lambda product}
    \#\Null_q(SMT)=\prod_{i=1}^n \lambda_{q,i}.
\end{equation}
Hence by \eqref{eq:lambdaqidef}, and \eqref{eq: s(a,q,m): z in Null(SMT) : 4}-\eqref{eq: z in Null(SMT)}, we have the following:

\begin{align}
    |S(\un{a},q,\underline{m})|^2&=q^{n}\prod_{i=1}^n\sum_{q/\lambda_{q,i}\mid z_i}e_q(z_i(T^t\un{m}')_i)=q^{n}\prod_{i=1}^n\sum_{x_i=1}^{\lambda_{q,i}}e_{\lambda_{q,i}}(x_i(T^t\un{m}')_i)
                                 \label{eq: s(a,q,m): prod lambda delta: 5}
                                 =q^{n}\prod_{i=1}^n \lambda_{q,i} \delta_{q,i}(\un{m}'),
\end{align}
where
\begin{equation}
    \label{E6000}
    \delta_{q,i}(\un{u}):=\begin{cases} 1  \;\;\mbox{ if } \lambda_{q,i} \mid (T^t\un{u})_i\\ 0 \;\;\mbox{  else }\end{cases},
\end{equation}
and $(\un{v})_i$ is the i-th component of vector $\un{v}$. Therefore, by \eqref{eq: Null = lambda product} and \eqref{eq: s(a,q,m): prod lambda delta: 5}:
\[|S(\un{a},q,\underline{m})|^2=q^{n}\#\Null_q(SMT)\prod_{i=1}^n\delta_{q,i}(\un{m}').\]
Finally it is easy to check that
\[\#\Null_q(SMT)=\#\Null_q(M)\]
since $S$ and $T$ are both invertible over $\mb{Z}/q\mb{Z}$ and therefore in this case we establish:
  \[|S(\un{a}, q; \underline{m})|= q^{n/2}\#\Null_q(M)^{1/2}\Delta_{q}(\un{m}+\un{\mathfrak{b}}),\]
 which clearly suffices.
\subsubsection{Case: $q$ even} We now turn to the case where $q$ is even. In this case, the above argument needs to be modified due to not being able to directly replace the condition $\delta_{2M}(\z)$ with $\delta_M(\z)$ in \eqref{eq: s(a,q,m): delta M_F : 2}. Instead we note that $\delta_{2M}(\z)\neq 0$ if and only if $M\z\equiv \un{0} \mod q/2$. In particular, there must be some $\uc\in \{0,1\}^n$ such that 
\[M\z \equiv \frac{q}{2}\uc \mod{q}.\]
Therefore, if we let 
\[N_{\uc,q}(M):=\{\x \mod q \: : \: M\x \equiv \frac{q}{2}\uc \mod{q}\},\]
then $\delta_{2M(\z)}\neq 0$ if and only if $\z\in N_{\uc,q}$ for some $\uc$. Hence, we may rewrite \eqref{eq: s(a,q,m): delta M_F : 2} as follows:
\begin{align}
    \label{eq: s(a,q,m) q even: 1}
    |S(\un{a}, q; \underline{m})|^2= q^n\sum_{\uc\in \{0,1\}^n} \sum_{\z\in N_{\uc,q}(M)} e_q(\z^t M \z + \m'\cdot \z).
\end{align}
We now wish to write $N_{\uc,q}$ in terms of $\Null_q(M)$ as this will enable us to use the arguments discussed in the odd case. To do this, we invoke Lemma \ref{lem:N_b=y_b+ Null} to see that either $N_{\uc,q}=\emptyset$ or there exists some $\y_{\uc}\in (\Z/q\Z)^n$ such that
\[N_{\uc,q}= \y_{\uc}+ \Null_q(M).\]
Hence 
\begin{align}
    \label{eq: s(a,q,m) q even: 2}
    |S(\un{a}, q; \underline{m})|^2&= q^n\sum_{\substack{\uc\in \{0,1\}^n\\ N_{\uc,q}(M)\neq \emptyset}} \sum_{\z\in \y_{\uc} + \Null_{q}(M)} e_q(\z^t M \z + \m'\cdot \z)\nonumber\\
    &=q^n\sum_{\substack{\uc\in \{0,1\}^n\\ N_{\uc,q}(M)\neq \emptyset}} \sum_{\z\in \Null_{q}(M)} e_q([\y_{\uc}+\z]^t M [\y_{\uc}+\z] + \m'\cdot [\y_{\uc}+\z])\nonumber\\
    &= q^n\sum_{\substack{\uc\in \{0,1\}^n\\ N_{\uc,q}(M)\neq \emptyset}} e_q(\y_{\uc}^t M \y_{\uc} + \m'\cdot \y_{\uc}) \sum_{\z\in \Null_{q}(M)} e_q((\z+2\y_{\uc})^t M \z + \m'\cdot \z)\nonumber\\
    &\leq q^n\sum_{\uc\in \{0,1\}^n} \Big{|} \sum_{\z\in \Null_{q}(M)} e_q((\z+2\y_{\uc})^t M \z + \m'\cdot \z)\Big{|}.
\end{align}
Finally, we note that $M\z\equiv \un{0} \mod q$ since $\z\in \Null_q(M)$, and so by \eqref{eq: s(a,q,m) q even: 2}, we have the following:

\begin{align}
    |S(\un{a}, q; \underline{m})|^2 &\leq q^n\sum_{\uc\in \{0,1\}^n} \Big{|} \sum_{\z\in \Null_{q}(M)} e_q(\m'\cdot \z)\Big{|}\nonumber\\
    &=2^n q^n \Big{|} \sum_{\z  \bmod{q}} e_q(\m'\cdot \z) \delta_{M}(\z)\Big{|}.\nonumber
\end{align}
This is precisely \eqref{eq: s(a,q,m): delta M_F simplified: 2.5} with an extra factor of $2^n$ and some absolute value signs around the sum (which are irrelevant). We may therefore repeat the arguments in the $q$ odd case which follow from \eqref{eq: s(a,q,m): delta M_F simplified: 2.5}  to establish Proposition \ref{T600}.
\end{proof}

\subsubsection{Special Case: $n=1$.}
We will now briefly consider the case when $n=1$, as we will need to deal with this case separately later. The arguments used above are still valid in this case, but the bound that we get is simpler due to the matrix, $M$, becoming an integer. In particular, Proposition \ref{T600} becomes

\Pb
\label{P:n=1 exponential sum bound: No Kloosterman}
Let $\ua\in \Z^2$ and $q\in N$ be such that $(\ua,q)=1$, let $m\in\Z$, and let $F,G\in \Z[x]$ be quadratic polynomials. Let
\begin{equation}
\label{eq: n=1 case: a_1F=a_2G}
(a_1F_1+a_2F_2)(x)=Mx^2+bx+c.
\end{equation}
 Then 
    \[|S(\un{a}, q; \underline{m})|\leq 2^{1/2}q^{1/2}(q,M)^{1/2}\Delta_{q}'(m+b)\]
where 
\begin{equation}
    \label{eq: n=1 case: Delta' definition}
    \Delta_{q}'(m):=\begin{cases} 1  \;\;\mbox{ if } (q,M) \mid m\\ 0 \;\;\mbox{  else. }\end{cases}
\end{equation}
\Pe

We will use Propositions \ref{T600} and \ref{P:n=1 exponential sum bound: No Kloosterman} directly in our future treatment of the cube-full part of $S(q_3,\m)$ (see \eqref{E62}) in order to get additional saving over the $\m$ sum. For the \ti{perfect square} part -- $b_2$ -- however, we will derive a slightly weaker bound from this which will be used to get saving over the $\h$ sum later on in the argument. 

\subsection{Cube-free Square Exponential Sums}
\label{S621}

In this section, we will assume that $q=b_2$, or equivalently that $q$ is a cube-free square. In this case, we will give up on the potential saving we could attain via the $\m$ sum from the $\Delta_q(\m')$ term in Proposition \ref{T600}, and bound $\#\Null_q(M(\ua))^{1/2}$ in terms of the singular locus of $F,G$, where $M(\ua)$ is defined as in \eqref{eq:a_1F+a_2G}. In this special case, we will need to obtain a pointwise saving over the $\ua$ sum in order for our bound to be useful. We will start with the case when $n\geq 2$. Upon letting $b_2=c^2$, and by Proposition \ref{T600}, Lemmas \ref{T1} - \ref{T20}, and \eqref{eq: S(q,m): bound w/ S(a,q,m)} we have

\begin{align}
    \label{eq: cube-free square case n>1: 1}
    |S(b_2,\m)|&\leq \starsum_{\ua}^{b_2}|S(\un{a}, b_2; \underline{m})|\leq b_2^{n/2}\starsum_{\ua}^{b_2}\#\Null_{c^2}(M(\ua))^{1/2}\nonumber\leq b_2^{n/2}\starsum_{\ua}^{b_2}\#\Null_{c}(M(\ua))\nonumber\\
                                                       &\ll b_2^{2+n/2}c^{s_p+1}=b_2^{2+n/2}\prod_{\substack{p^2\mid q\\ p \textrm{ prime }}} p^{s_p+1}
                                             =b_2^{2+n/2}D(b_2).
\end{align}
When $n=1$, we have $M(\ua)=a_1d_F+a_2d_G$ for some constants $d_F,d_G$. the same type of argument applies. By Proposition \ref{P:n=1 exponential sum bound: No Kloosterman},
\begin{align}
    \label{eq: cube-free square case n=1: 1}
    |S(p^2,\m)|&\leq p\starsum_{\ua\bmod{p^2}}(p^2,M(\ua))^{1/2}\leq p\starsum_{\ua\bmod{p^2}}(p,a_1d_F+a_2d_G)\nonumber\\&
    =  p \Bo\: \starsum_{\substack{\ua\bmod{p^2}\\p | a_1d_F+a_2d_G}} p + \starsum_{\substack{\ua\bmod{p^2}\\p \nmid a_1d_F+a_2d_G}} 1 \Bc\nonumber\\
               &\leq \begin{cases} 2p^{5} \mbox{ if } (d_F, d_G, p)=1 \\ p^6 \mbox{ else. }\end{cases}
\end{align}
Hence, upon recalling \eqref{eq:Ddef1}, we may bound \eqref{eq: cube-free square case n=1: 1} by 
\begin{align}
    \label{eq: cube-free square case n=1: 2}
    |S(p^2,\m)|\ll p^{5}D(p)\nonumber.
\end{align}
We may then use the multiplicativity relation in Lemma \ref{T63} to get 
\[|S(b_2,\m)|\ll b_2^{2+1/2+\ve}D(b_2).\]
Combining this with \eqref{eq: cube-free square case n>1: 1} gives us the following:
\Pb
\label{T602}
Let $b_2\in \N$ be a cube-free square. Then
\[S(b_2,\m)\ll b_2^{2+n/2+\ve}D(b_2).\]
\Pe

\section{Quadratic Exponential Sums: Finalisation}
\label{sec:quadfinal}
In this section, we will combine all of the bounds we have found in Section \ref{Sec: Exponential Sums: Initial Bounds} to reach our final estimate for $T(q,\z)$. Recall that Proposition \ref{P: T_h(q,z): bound in terms of V sum} hands us
\begin{align}
    \label{E7000}
    |T(q,\z)|\ll 1+q^{-n}\max_{\underline{y}\in P\,\supp(\omega)}\Big{\{}\sum_{|\underline{m}-\underline{m}_0(\underline{y})|\leq V}|S(q;\underline{m})|\Big{\}}.
\end{align}
In the last section, we focused on getting bounds for individual exponential sums $|S(q;\underline{m})|$. We begin by considering averages of exponential sums. Throughout, let $\m_0$ be an arbitrary but fixed vector in $\Z^n$ and let $\mfb(\ua)=\mfb$ be defined as in \eqref{eq:a_1F+a_2G}. For $n\geq 2$: By Lemma \ref{T63} and Propositions \ref{P:sq free:n>1}, \ref{T600}, and \ref{T602}, there are some constants $c_1,c_2,c_3$ such that $(b_1,c_1)=(b_2,c_2)=(q_3,c_3)=1$, and
\begin{align}
    \sum_{|\underline{m}-\underline{m}_0|\leq V}|S(q;\underline{m})|&\leq \sum_{|\underline{m}-\underline{m}_0|\leq V}|S(b_1;c_1\underline{m})|\cdot|S(b_2;c_1\underline{m})|\cdot|S(q_3;c_1\underline{m})|\nonumber\\
    &\ll q^{n/2+\ve} b_1b_2^2 D(b_1b_2)\sum_{|\underline{m}-\underline{m}_0(\underline{m}_0)|\leq V} (\Phi(c_1\un{m}),b_1)^{1/2}\times\nonumber\\
    &\quad\quad\quad\quad\quad\quad\;\;\starsum_{\ua}^{q_3}\#\Null_{q_3}(a_1M_1+a_2M_2)^{1/2}\Delta_{T,q_3}(c_3\m+\mfb) \nonumber \\ 
    \label{E701}
    &:= q^{n/2+\ve} b_1b_2^2 D(b_1b_2)\starsum_{\ua}^{q_3}\#\Null_{q_3}(M(\ua))^{1/2}B(b_1,q_3, V; \m_0),
\end{align}
where $M(\ua)$ be as in \eqref{eq:Mdef}
and
\begin{align}
    \label{eq: B(b_1,q_3,V,y) defn}
    B(b_1,q_3, V; \m_0):&= \sum_{|\underline{m}-\underline{m}_0|\leq V} (\Phi(\un{m}),b_1)^{1/2}\cdot \Delta_{T,q_3}(\un{m}+\mfb'),
\end{align}
where $\mfb'\equiv c_3^{-1}\mfb \mod q_3$. We used $(\Phi(\un{m}),b_1)$ instead of $(\Phi(c_1\un{m}),b_1)$ in the definition of $B(b_1,q_3,V;\m_0)$ because $\Phi$ is homogeneous and $(b_1,c_1)=1$. Likewise, by inspecting the definition of $\Delta$, we can use $\Delta_{T,q_3}(\un{m}+\mfb')$ in the definition of $B(b_1,q_3,V;\m_0)$ instead of $\Delta_{T,q_3}(c_3\un{m}+\mfb)$  since we can ``divide through" by $c_3$, as $(c_3,q_3)=1$ (in particular $(c_3,\lambda)=1$ for any divisor, $\lambda$, of $q_3$).

The first and most difficult task for this section is to bound $B(b_1,q_3, V; \m_0)$. This will be quite a delicate task since we need to save over the $\m$ sum in two different ways, simultaneously. The following Lemma will provide our main estimate for this sum:

\Lb
\label{L: B(b_1,q_3, V,y): main bound}
Let $b_1,q_3, V\in \N$ and $\m_0\in \Z^n$. Furthermore, let $c$ and $q_3$ be defined as follows: 
\begin{align}
    \label{eq: exp sum finalisation: tilde(q)_3 defn}
    \hat{q}_3:=\prod_{\substack{p^e||q_3\\ 2\nmid e}} p, \quad q_3=c^2\hat{q}_3
\end{align}
Then
\[B(b_1,q_3, V; \m_0) \ll b_1^{\ve}\Bo b_1^{1/2}c^{n/2}+ V^{n-1}b_1^{1/2}c^{1/2}+ V^n \Bc \#\Null_c(M(\ua))^{-1}.\]
\Le

\begin{proof}
We begin by noting that 
\[(T^t\x)_i\equiv 0 \mod (q_3,\lambda_i) \;\implies\; (T^t\x)_i\equiv 0 \mod (c,\lambda_i),\]
and so by the definition of $\Delta_{T, q_3}$ \eqref{E6001} we clearly have that
\[\Delta_{T, q_3}(\x)=1 \;\implies\; \Delta_{T, c}(\x)=1,\]
for any $\x\in\Z^n$. Therefore -- since we are looking for an upper bound of \\$B(b_1,q_3, V; \m_0)$ -- we may replace $\Delta_{T, q_3}(\m+\mfb')$ in \eqref{eq: B(b_1,q_3,V,y) defn} with $\Delta_{T, c}(\m+\mfb')$. Furthermore, since all elements of our sum are non-negative, we may extend the sum in \eqref{eq: B(b_1,q_3,V,y) defn} if we wish. In particular, the following bound must be true:
\begin{align}
    \label{E7005}
    B(b_1,q_3, V; \m_0)&\leq \sum_{|\underline{m}-\underline{m}_0|\leq \hat{V}} (\Phi(\un{m}),b_1)^{1/2}\cdot \Delta_{T,c}(\un{m}+\mfb'),
\end{align}
where 
\begin{align}
    \label{eq: exp sum finalisation: hat(V) defn}
    \hat{V}:=\max\{V,c\}.
\end{align}
We have extended the sum up to $\hat{V}$ so that we can consider complete sums modulo $c$, as this will make it easier to acquire saving from $\Delta_{T,c}$ later. To this end, let $\m:=\m_0+\m_1+c\m_2$, where $\m_1\in(\Z/c\Z)^n$ and $|\m_2|\leq \hat{V}/c$.
Applying this decomposition on the right-hand side of \eqref{E7005} gives
\begin{align}
    \label{E7007}
    B(b_1,q_3, V; \m_0)&\leq \sum_{\m_1 \bmod{c}}\:\sum_{|\m_2|\leq \hat{V}/c} (\Phi(\m_0+\m_1+c\m_2),b_1)^{1/2}\Delta_{T,c}(\m_0+\m_1 +c\m_2+\mfb')\nonumber\\
    &=\sum_{\m_1 \bmod{c}}\Delta_{T,c}(\m_0+\m_1+\mfb')\sum_{\m_2\in U(\m_1)} (\Phi(\m_0+\m_1+c\m_2),b_1)^{1/2}.
\end{align}
The upshot of reordering our sum in this way is that we have managed to separate $\Delta_{T,c}(\m_0+\m_1+\mfb')$ and $(\Phi(\m_0+\m_1+c\m_2),b_1)^{1/2}$. In particular, we can treat $\m_1$ as fixed for now, and since $\m_0$ and $c$ are also fixed, we may focus on acquiring saving in the $\m_2$ sum via $(\Phi_{c,\m_1}(\m_2),b_1)^{1/2}$, where
\begin{align*}
    \Phi_{c,\m_0,\m_1}(\m_2):=\Phi(\m_0+\m_1+c\m_2).
\end{align*}

We observe that $(\Phi_{c,\m_0,\m_1}(\m_2),b_1)$ must be equal to some divisor of $b_1$, so we will decompose the $\m_2$ sum as follows:
\begin{align}
    \label{E7008}
    \sum_{|\m_2|\leq \hat{V}/c} (\Phi(\m_0+\m_1+c\m_2)&,b_1)^{1/2}=\sum_{d\mid b_1} d^{1/2} \#\{|\x|\leq \hat{V}/c : \Phi(\m_0+\m_1+c\x)\equiv 0 \mod d\}.
\end{align}
We now aim to use \cite[Lemma 4]{Browning-Heath-Brown09} to bound the right hand side. Since $\Phi$ is homogeneous with $\cont(\Phi)=1$ by Proposition \ref{P:sq free:n>1}, and since $c$ and $d$ are co-prime, we have that
\[(\cont(\Phi_{c,\m_0,\m_1}),d)\leq (\cont(\Phi_{c,\m_0,\m_1}^{(0)}),d)=(c^{\deg(\Phi)} \cont(\Phi), d)=(c^{\deg(\Phi)},d)=1.\]
Hence for every prime $p$ dividing $d$, $\Phi(\m_0+\m_1+c\x)$ is a non-trivial polynomial and therefore the corresponding variety is of dimension $n-1$. Therefore, we may now use \cite[Lemma 4]{Browning-Heath-Brown09} to conclude that
\begin{align*}
   \#\{|\x|\leq \hat{V}/c: \Phi(\m_0+\m_1+c\x)\equiv 0 \mod d\}&\ll 1+\Bo \frac{V}{c}\Bc^{n-1}+ \Bo \frac{V}{c}\Bc^n d^{-1}.
\end{align*} 
Substituting this back into \eqref{E7008} gives the following:
\begin{align}
    \label{E7011}
  \sum_{|\m_2|\leq \hat{V}/c} (\Phi(\m_0+\m_1+c\m_2),b_1)^{1/2}&\ll \sum_{d\mid b_1} d^{1/2}+\Bo \frac{V}{c}\Bc^{n-1}d^{1/2}+ \Bo \frac{V}{c}\Bc^n d^{-1/2}\nonumber\\
    &\ll b_1^{\ve}\Bo b_1^{1/2}+ \Bo \frac{V}{c}\Bc^{n-1}b_1^{1/2}+ \Bo \frac{V}{c}\Bc^n \Bc.
\end{align}
This in turn will enable us to find a suitable bound for $B(b_1,q_2,V;\m_0)$. By \eqref{E7007} and \eqref{E7011}, we have 
\begin{align}
    \label{E7012}
    B(b_1,q_3, V; \m_0)&\ll b_1^{\ve}\Bo b_1^{1/2}+ \Bo \frac{V}{c}\Bc^{n-1}b_1^{1/2}+ \Bo \frac{V}{c}\Bc^n \Bc \sum_{\x\bmod{c}}\Delta_{T,c}(\x+\m_0+\mfb').
\end{align}
In order to find the bound we desire for $B(b_1,q_3, V; \m_0)$, we will need to turn our attention to the sum of type
\[\sum_{\x\bmod{c}}\Delta_{T,c}(\x+\un{l}),\]
for some fixed $\un{l}\in\Z^n$. Our bound here will be independent of the choice of the vector $\un{l}$. This sum is much easier to handle since we have a complete sum at hand. It is easy to check from the definition of $\Delta_{T,c}(\x+\un{l})$ (and the fact that $\det(T^t)=1$) that
\begin{align*}
    \sum_{\x\bmod{c}}\Delta_{T,c}(\x+\un{l})&=\#\{\x \bmod{c} \: : \: (T^t\x)_i\equiv -(\un{l})_i \bmod{\lambda_{c,i}}, \; i\in\{1,\cdots, n\}\}\\
    &\leq \#\{\x \bmod{c} \: : \: (T^t\x)_i\equiv 0 \bmod{\lambda_{c,i}}, \; i\in\{1,\cdots, n\}\}\\
    &= \#\{\x \bmod{c} \: : \: x_i\equiv 0 \bmod{\lambda_{c,i}}, \; i\in\{1,\cdots, n\}\}\\
    &=\frac{c^n}{\prod_i \lambda_{c,i}}=c^n \#\Null_c(M(\ua))^{-1}.
\end{align*}
Therefore, by \eqref{E7012}, we have
\begin{align}
    \label{E7014}
    B(b_1,q_3, V; \m_0)
    &\leq b_1^{\ve}\Bo b_1^{1/2}c^n+ V^{n-1}b_1^{1/2}c+ V^n \Bc \#\Null_c(M(\ua))^{-1},
\end{align}
as required.
\end{proof}
 We are now ready to obtain a final bound for $\sum_{|\m-\m_0|\leq V} |S(q;\m)|$. Before substituting \eqref{E7014} back into \eqref{E701}, we will perform some simplifications. Firstly, we note that by \eqref{eq: exp sum finalisation: tilde(q)_3 defn} and Lemma \ref{T1}, we have
\begin{align}
    \label{E7015}
    \#\Null_{q_3}(M(\ua))^{1/2}\leq \#\Null_{c}(M(\ua))\#\Null_{\hat{q}_3}(M(\ua))^{1/2}.
\end{align}
Furthermore, by Proposition \ref{T20}, we have
\begin{align}
    \label{E7016}
    \starsum_{\ua}^{q_3}\#\Null_{\hat{q}_3}(M(\ua))^{1/2}&\leq \starsum_{\ua}^{q_3}\#\Null_{\hat{q}_3}(M(\ua)) \ll q_3^{2+\ve} \prod_{p_i | \hat{q}_3} p_i^{s_{p_i}(F,G)+1}
    =q_3^{2+\ve} D(\hat{q}_3).
\end{align}
Finally, by combining \eqref{E7014}-\eqref{E7016} with \eqref{E701}, we arrive at the following bound:

\Lb
\label{lem:V sum bound n>1}
For every $q\in \N$, if $n>1$, then 
\begin{align}
    \sum_{|\underline{m}-\underline{m}_0|\leq V}|S(q;\underline{m})|&\ll q^{1+n/2+\ve} b_2q_3 D(b_1b_2\hat{q}_3)\Bo b_1^{1/2}c^{n}+ V^{n-1}b_1^{1/2}c+ V^n \Bc\nonumber,
\end{align}
where $q_3=c^2\hat{q}_3$ as defined in the statement of Lemma \ref{L: B(b_1,q_3, V,y): main bound}.
\Le

Recall that our ultimate goal was to find a suitable bound for $|T(q,\z)|$. Upon noting that the above treatment of $ \sum_{|\underline{m}-\underline{m}_0|\leq V}|S(q;\underline{m})|$ works for any value of $\y\in P\,\supp(\omega)$ we may now substitute the bound in Lemma \ref{lem:V sum bound n>1} into \eqref{E7000} to get the following bound for $T(q,\z)$:
\begin{align*}
    |T(q,\z)|&\ll 1+ P^n q^{1-n/2+\ve} b_1^{1/2}b_2q_3 D(b_1b_2\hat{q}_3)\Bo V^n b_1^{-1/2} + V^{n-1}c + c^n\Bc.\\
\end{align*}
If $q$ is sufficiently small ($q<P^2$ say), then the right hand term dominates over $1$ for every $n\geq 1$. Therefore, we finally reach the following bound for $|T(q,\z)|$:
\[|T(q,\z)|\ll P^n q^{1-n/2+\ve} b_1^{1/2}b_2q_3 D(b_1b_2\hat{q}_3)\Bo V^n b_1^{-1/2} + V^{n-1}c + c^n\Bc,\]
where $q_3=c^2\hat{q}_3$ as defined in Lemma \ref{L: B(b_1,q_3, V,y): main bound}. Note that if we use a weaker bound $c\leq b_3^{1/3}q_4^{1/2}$ and $D(\hat{q}_3)\leq D(q_3)$, and use the quality $q_3=b_3q_4$, the above bound becomes:
\Pb
\label{proposition:Poissonquadfinal} 
For every $q=b_1b_2q_3<P^2$, $\z$, and every $\ve>0$, if $n>1$, we have 
\[|T(q,\z)|\ll P^n q^{1-n/2+\ve} b_1^{1/2}b_2q_3 D(q)\Bo V^n b_1^{-1/2} + V^{n-1}b_3^{1/3}q_4^{1/2} + b_3^{n/3}q_4^{n/2}\Bc,\]
where $n$ is the number of variables of $F, G$, $b_3$ is the 4th power-free cube part of $q$, $q_i$ is the i-th power-full part of $q$.  
\Pe
The bound for the $n=1$ case is much simpler to derive than in the $n>1$ case. By Lemma \ref{T63} and Propositions \ref{P:sq free:n=1}, \ref{P:n=1 exponential sum bound: No Kloosterman}, and \ref{T602}, we have
\begin{align}
\label{eq: Exponential sums finalisation, n=1: 1}
    \sum_{|m-m_0|\leq V}|S(q;m)| &\ll q^{1/2+\ve}b_1^{3/2} b_2^2D(b_1b_2) \starsum_{\ua}^{q_3}(q_3,M(\ua))^{1/2}\sum_{|m-m_0|\leq V} \Delta'_{q_3}(c_3m+\mathfrak{b})\nonumber\\
    &\leq q^{1/2+\ve}b_1^{3/2} b_2^2D(b_1b_2)\starsum_{\ua}^{q_3}(q_3,M(\ua))^{1/2} \sum_{|m-m_0|\leq \max\{V,q_3\}} \Delta'_{q_3}(m+\mathfrak{b}')\nonumber\\
    &= q^{1/2+\ve}b_1^{3/2} b_2^2D(b_1b_2)\starsum_{\ua}^{q_3}(q_3,M(\ua))^{1/2}\Bo 1 + \frac{V}{(q,M(\ua))} \Bc\nonumber\\
    &\leq q^{1/2+\ve}b_1^{3/2} b_2^2D(b_1b_2)\starsum_{\ua}^{q_3}\big{(}(q_3,M(\ua))^{1/2} + V \big{)}.
\end{align}
We trivially have $\sum_{\ua} V\leq q_3^2 V$. As for the other part of the sum, upon recalling that $q_3=\hat{q}_3c^2$, we have
\begin{align*}
    \starsum_{\ua}^{q_3}(q_3,M(\ua))^{1/2}&\leq c \starsum_{\ua}^{q_3}(\hat{q}_3,M(\ua))^{1/2}\leq c^5 \starsum_{\ua}^{\hat{q}_3}(\hat{q}_3,M(\ua))^{1/2}\ll q_3^2 c D(\hat{q}_3)\leq q_3^2 c D(q_3),
\end{align*}
by the same argument as the proof of Proposition \ref{T602}. Combining this with \eqref{eq: Exponential sums finalisation, n=1: 1} gives the following result.
\Lb
Let $q=b_1b_2q_3\in \N$, $m\in \Z$, $q_3:=c^2 \hat{q}_3$ be defined as in Lemma \ref{L: B(b_1,q_3, V,y): main bound}. Then for every $\ve>0$,
\begin{align*}
     \sum_{|m-m_0|\leq V}|S(q;m)| &\ll q^{2+\ve} (b_2b_3q_4)^{1/2} D(q) (V+c).
\end{align*}
\Le
Finally, upon recalling that $q_3=b_3q_4$, $c\leq b_3^{1/3}q_3^{1/2}$, we may combine this lemma with \eqref{E7000} to get our final bound for $|T(q,\z)|$ in the $n=1$ case:
\Pb
\label{proposition:n=1Poissonquadfinal} 
For every $q<P^2$, $\z$, and every $\ve>0$, if $n=1$, we have
\[|T(q,\z)|\ll P q^{1+\ve} (b_2q_3)^{1/2} D(q) (V + b_3^{1/3}q_4^{1/2}),\]
where $n$ is the number of variables of $F, G$, $b_3$ is the 4th power-free cube part of $q$, and $q_4$ is the 4th power-full part of $q$.  
\Pe
\section{Finalisation of the Poisson bound}
\label{hsum}
In this section, we will finalise our main bounds coming from Poisson summation. For a fixed value of $t$, Lemmas \ref{T46} and \ref{T45} allow us to consider bounding the sum
\[\sup_{t\ll|\uz|\ll t}\sum_{\underline{h}\ll H}\,\, |T_{\underline{h}}(q,\uz)|,\]
where 
\begin{equation*}
    T_{\underline{h}}(q,\underline{z}):= \starsum_{\ua\bmod{q}}\sum_{\underline{x}\in\mathbb{Z}^n} \omega_{\underline{h}}(\underline{x}/P)e((a_1/q+z_1)F_{\h}(\underline{x})+(a_2/q+z_2)G_{\h}(\underline{x})),
\end{equation*}
is the quadratic exponential sum as defined in \eqref{eq:Thmaindef}. We may therefore apply our bounds for quadratic exponential sums in Propositions \ref{proposition:Poissonquadfinal} and \ref{proposition:n=1Poissonquadfinal} to estimate these. 

Now that $\underline{h}$ is allowed to vary, we will define 
\begin{align}
    \label{eq: Poisson Finalisation: m_p(h) defn}
    s_p(\underline{h}):=s_p(F_{\underline{h}}^{(0)},G_{\underline{h}}^{(0)}),
\end{align} 
where $F_{\underline{h}}^{(0)}$ and $G_{\underline{h}}^{(0)}$ denote the leading quadratic parts of $ F_\h$ and $G_\h$ respectively. We recall that $q=b_1b_2q_3$, where $q_3$ is the cube-full part of $q$, and $b_1,b_2$ are the square-free and cube-free square parts of $q$. Since we are fixing $q$ for now, $b_1$, $b_2$, and $q_3$ are also fixed. Recall that we may write $b_i=b_{i,0}b_{i,1}\cdots b_{i,n}$, $q_3=q_{3,0}q_{3,1}\cdots q_{3,n}$ where $b_{i,j}$, and $q_{3,j}$ now depend on $\underline{h}$ and are defined to be
\[b_{i,j}(\underline{h})=\prod_{\substack{p^i||b_i\\s_p(\underline{h})=j-1}} p^i,\quad q_{3,j}(\underline{h})=\prod_{\substack{p^e||q_3\\s_p(\underline{h})=j-1}} p^e.\]
We see that for any $q$ fixed, there are at most $O(q^{\ve})=O(P^{\ve})$ possible choices for 
\[\underline{c}=(b_{1,0},\cdots, b_{1,n},b_{2,0},\cdots, b_{2,n},q_{3,0},\cdots, q_{3,n})\]
since there are only at most $O(q^{\ve})$ partitions of $q$ into multiplicative factors. Therefore using the triangle inequality, we have that
\[\sum_{\underline{h}\ll H} |T_{\underline{h}}(q,z)|\leq P^{\ve}\max_{\underline{c}}\Big{\{}\sum_{\substack{\underline{h}\\\underline{c}(\underline{h})=\underline{c}}} |T_{\underline{h}}(q,z)|\Big{\}}=P^{\ve}\sum_{\substack{\underline{h}\\ \underline{c}(\underline{h})=\underline{c}'}} |T_{\underline{h}}(q,z)|\]
for some particular $\underline{c}'$, and $\underline{c}(\underline{h}):=(b_{1,0}(\underline{h}),\cdots, q_{3,n}(\underline{h}))$. We can then decompose this sum further by grouping $\underline{h}$'s with $s_{\infty}(\underline{h})=s$.
\begin{equation}
    \label{eq: Finalisation of Poisson: h sum bounded by H_s sum: 1}
    \implies \sum_{\underline{h}\ll H} |T_{\underline{h}}(q,z)|\leq P^{\ve}\sum_{s=-1}^{n-1}\sum_{\underline{h}\in\mathcal{H}_s} |T_{\underline{h}}(q,z)|,
\end{equation}
where 
\begin{equation}
    \label{eq: Finalisation of Poisson: H_s definition}
    \mathcal{H}_s:=\{\underline{h}\in\mathbb{Z}^n:\, \underline{h}\ll H,\; \underline{c}'(\underline{h})=\underline{c}',\; s_{\infty}(\underline{h})=s\}.
\end{equation}
Here, given $\nu$ either a prime, or $\infty$ we define
\begin{equation}
s_{\nu}(\underline{h})=s_{\nu}(F^{(0)}_{\underline{h}},G^{(0)}_{\underline{h}}).
\end{equation}
We now aim to estimate the size of $\mathcal{H}_s$. We start by noting that we must have that $\mathcal{H}_s=\emptyset$ unless $b_{1,i}=b_{2,i}=q_{3,i}=1$ for $i\leq s$. This is because $s_p(\underline{h})\geq s_{\infty}(\underline{h})$ for every $p$. To get a bound on $\#\mathcal{H}_s$ we will start by constructing a set which contains $\mathcal{H}_s$ that is easier to work with. Let 
\begin{align*}
    V_{\nu,i}&:=\{\underline{h}\in\mathbb{A}_{\overline{\FF}_{\nu}}^n\: |\: s_{\nu}(\underline{h})\geq i-1\}.
\end{align*}
Then, upon defining $[\h]_p$ to be the reduction modulo $p$ of a point $\h\in\Z^n$, we have
\begin{equation}
    \label{eq: Finalisation of Poisson: H_s subset defined by V_p,i}
    \mathcal{H}_s\subset \{\underline{h}\in V_{\infty,s+1}\cap \big{[}-H,H\big{]}^n| [\underline{h}]_p\in V_{p,i} \mbox{ for all } p|b_{1,i}b_{2,i}q_{3,i}\}.
\end{equation}
In order to bound this larger set, we will need the following lemma, which is analogous to \cite[Lemma 8.2]{Marmon_Vishe}.
\begin{lemma}
\label{T71}
Let $F$, $G$ be a pair of forms of degree $d_1$, $d_2$, and define $\sigma:=s_{\infty}(F,G)$. Then there is an absolute constant $C$ s.t.
\[\dim(V_{\nu,i})\leq \min\{n, n+\sigma+1-i\}\]
as long as $\nu=p>C$ or $\nu=\infty$.
\end{lemma}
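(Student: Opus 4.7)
The bound $\dim(V_{\nu,i}) \leq n$ is automatic since $V_{\nu,i} \subseteq \AA^n$, so the content is the estimate $\dim(V_{\nu,i}) \leq n+\sigma+1-i$. The plan is to derive this from a dimension count on the incidence variety
\[
W \;=\; \{(\h,\x)\in\AA^n\times\PP^{n-1}_{\overline{\FF}_\nu} : F_{\h}^{(0)}(\x)=G_{\h}^{(0)}(\x)=0,\ \rank(\mathcal{H}_F(\x)\h,\mathcal{H}_G(\x)\h)<2\}.
\]
I would exploit the identities $F_\h^{(0)}(\x)=\h\cdot\nabla F(\x)=\tfrac12\,\x^t\mathcal{H}_F(\h)\x$ and $\nabla_{\x}F_\h^{(0)}(\x)=\mathcal{H}_F(\x)\h=\mathcal{H}_F(\h)\x$ (both valid because $F$ is cubic), and analogously for $G$. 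Under these identifications the fibre of the first projection $\pi_1:W\to\AA^n$ over $\h$ is exactly $\Sing(V(F_\h^{(0)},G_\h^{(0)}))$, of dimension $s_\nu(\h)$, and restricting $\pi_1$ to $V_{\nu,i}$ therefore yields $\dim\pi_1^{-1}(V_{\nu,i})\geq\dim V_{\nu,i}+(i-1)$.

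For the upper bound on $\dim W$ I would analyse the second projection $\pi_2:W\to\PP^{n-1}$, whose fibre over $\x$ is
\[
L(\x) \;=\; \{\h:\h\cdot\nabla F(\x)=\h\cdot\nabla G(\x)=0,\ \rank(\mathcal{H}_F(\x)\h,\mathcal{H}_G(\x)\h)<2\}.
\]
Parametrising the rank condition by a pencil $(\lambda_1:\lambda_2)\in\PP^1$, one may write $L(\x)=\bigcup_{\lambda}L_\lambda(\x)$ with $L_\lambda(\x)=\ker M(\x,\lambda)\cap\{\h\cdot\nabla F(\x)=\h\cdot\nabla G(\x)=0\}$, where $M(\x,\lambda):=\lambda_1\mathcal{H}_F(\x)+\lambda_2\mathcal{H}_G(\x)$ is the Hessian of the cubic $\lambda_1F+\lambda_2G$ at $\x$. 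Euler's identity $\x^tM(\x,\lambda)=2(\lambda_1\nabla F(\x)+\lambda_2\nabla G(\x))^t$ shows that $M(\x,\lambda)\h=0$ automatically forces $\lambda_1\h\cdot\nabla F(\x)+\lambda_2\h\cdot\nabla G(\x)=0$, so only one of the two linear constraints cutting out $L_\lambda(\x)$ is genuinely independent.

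The key step is the uniform bound $\max_{\x\in\PP^{n-1}}\dim L(\x)\leq\sigma+1$, which combined with $\dim W\leq(n-1)+\max_\x\dim L(\x)$ yields $\dim V_{\nu,i}+(i-1)\leq(n-1)+(\sigma+1)$ as required. To establish it I would stratify $\PP^{n-1}=U\sqcup Z$ with $U=\{\x:\rank(\nabla F(\x),\nabla G(\x))=2\}$, noting that $\dim Z\leq\sigma+1$ by Lemma~\ref{P: background on a pair of quadrics: sigma'<= sigma+1}. Over $U$, I would apply Proposition~\ref{E22} to the pair of quadrics in $\h$ whose symmetric matrices are $\tfrac12\mathcal{H}_F(\x)$ and $\tfrac12\mathcal{H}_G(\x)$ (valid provided $\nu\gg_n 1$, which furnishes the constant $C$ in the statement); outside at most finitely many exceptional $\lambda\in\PP^1$ this yields $\rank M(\x,\lambda)\geq n-\sigma-1$ and hence $\dim L_\lambda(\x)\leq\sigma$ after the Euler redundancy. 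At the exceptional $\lambda$ the bound degrades by one, but these contribute only along a zero-dimensional $\lambda$-slice, so summing in families over $\PP^1$ still yields $\dim L(\x)\leq\sigma+1$ on $U$. Over $Z$ the crude estimate $\dim\pi_2^{-1}(Z)\leq\dim Z+n\leq n+\sigma+1$ absorbs the remaining contribution.

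The principal obstacle is rigorously verifying the uniform bound $\dim L(\x)\leq\sigma+1$ along $Z$, where the two linear conditions defining $L(\x)$ partially degenerate and the singular-locus parameter entering Proposition~\ref{E22} for the auxiliary pair of quadrics must be compared with $\sigma$ itself. A useful workaround in the spirit of \cite[Lemma 8.2]{Marmon_Vishe} is to invoke Lemma~\ref{lem:hyperplane}: by restricting to a generic linear subspace of codimension $\sigma+1$ on which $V(F,G)$ becomes smooth, one reduces to the case $\sigma=-1$, where Proposition~\ref{E22} applies directly, and then unwinds the hyperplane restriction to recover the claim for general $\sigma$.
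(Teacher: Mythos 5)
Your approach is genuinely different from the paper's, and the difference is where the trouble lies. You bound $\dim W$ via the second projection $\pi_2:W\to\PP^{n-1}$ together with a uniform fibre estimate $\max_{\x}\dim L(\x)\le\sigma+1$, whereas the paper bounds essentially the same incidence variety (its $U(F^{(0)},G^{(0)})\subset\AA^{2n}$) by intersecting with the diagonal $D=\{\x=\y\}$ and invoking the affine dimension theorem. The diagonal trick is decisive precisely because Euler's identity collapses $U\cap D$ to the affine cone over $\Sing(F^{(0)},G^{(0)})$, giving $\dim(U\cap D)=\sigma+1$ exactly and hence $\dim U\le\dim(U\cap D)+n=n+\sigma+1$ in one line, with no case analysis and no uniformity issue. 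Your route avoids the diagonal but in exchange must control the fibre dimension at every $\x$, and that is where the argument does not close.

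Concretely, the key claim $\dim L(\x)\le\sigma+1$ on the open stratum $U$ is not established. You propose to apply Proposition~\ref{E22} to the pair of quadratic forms in $\h$ with matrices $\tfrac12\mathcal{H}_F(\x)$, $\tfrac12\mathcal{H}_G(\x)$, but the quantity $s_\nu$ entering that proposition is the singular-locus dimension of $V\bigl(\h^t\mathcal{H}_F(\x)\h,\,\h^t\mathcal{H}_G(\x)\h\bigr)$ --- a variety in $\h$-space that depends on $\x$ and is \emph{not} the same object as $L(\x)$ (the latter is cut out by the \emph{linear} conditions $\h\cdot\nabla F(\x)=\h\cdot\nabla G(\x)=0$, not by the quadrics vanishing), nor is it a priori bounded by $\sigma$. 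Without a uniform bound on this auxiliary singular locus in terms of $\sigma$, Proposition~\ref{E22} gives you nothing. The ``Euler redundancy'' reduction from $\dim\ker M(\x,\lambda)\le\sigma+1$ to $\dim L_\lambda(\x)\le\sigma$ is also unjustified: it requires the surviving linear constraint to be non-trivial on $\ker M(\x,\lambda)$, which fails exactly when the relevant gradient lies in $\operatorname{im}M(\x,\lambda)$, and nothing rules this out uniformly in $\x$. Finally, the proposed repair via Lemma~\ref{lem:hyperplane} restricts the $\x$-variables to a codimension-$(\sigma+1)$ subspace, but $V_{\nu,i}$ lives in $\h$-space; it is not explained how a bound for the restricted system unwinds to a bound on the original $V_{\nu,i}$. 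You should either supply a uniform argument for $\dim L(\x)\le\sigma+1$ or adopt the paper's diagonal intersection, which sidesteps the fibrewise analysis entirely.
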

\begin{proof}
We prove this result for any pair of forms instead of two cubics as it does not change the argument. Since
\begin{align*}
    s_{\nu}(F^{(0)}_{\underline{h}},G^{(0)}_{\underline{h}})=\dim(\{\underline{x}\in\mathbb{P}_{\overline{\FF}_{\nu}}^{n-1}\: |\:\underline{h}\cdot \nabla F^{(0)}(\underline{x})=\underline{h}\cdot \nabla G^{(0)}(\underline{x})=0, &\rank\begin{pmatrix}
\underline{h}\cdot \nabla^2 F^{(0)}(\underline{x})\\
\underline{h}\cdot \nabla^2 G^{(0)}(\underline{x})
\end{pmatrix}<2\}),
\end{align*}
we can use \cite[Lemma 3(ii)]{Marmon08} to conclude that $\dim(V_{\nu,i})\leq \min\{n,n+\sigma+1-i\}$, provided that $\nu=p\gg_{d_1,d_2} 1$. Therefore we only need to check $V_{\infty,i}$. We will use a slight modification to the argument used in \cite[Lemma 1]{Browning-Heath-Brown09} in order to show that $\dim (V_{\infty,i})\leq \min\{n,n+\sigma+1-i\}$: Let 
\[U(F,G)=U:=\{(\underline{x},\y)\in\mathbb{A}_{\Q}^{2n}\, |\,\underline{y}\cdot \nabla F(\underline{x})=\underline{y}\cdot \nabla G(\underline{x})=0, \,\rank\begin{pmatrix}
\underline{y}\cdot \nabla^2 F(\underline{x})\\
\underline{y}\cdot \nabla^2 G(\underline{x})
\end{pmatrix}<2\},\]
for $F,G$ homogeneous forms of degree 3, and let $D:=\{(\x,\y)\in\mathbb{A}_{\Q}^{2n} \: | \: \x=\y\}$. Then by the Affine Dimension Theorem, we have that
\begin{equation}
    \label{eq: dim U : affine dimension theorem}
    \dim(U)\leq \dim(U\cap D) - \dim(D) +2n=\dim(U\cap D) +n.
\end{equation}
Next, we note that 
\begin{align*}
    U\cap D&=\{\x\in\mathbb{A}_{\Q}^{n} \: | \:\underline{x}\cdot \nabla F(\underline{x})=\underline{x}\cdot \nabla G(\underline{x})=0, \;\rank\begin{pmatrix}
\nabla(\underline{x}\cdot \nabla F(\underline{x}))\\
\nabla(\underline{x}\cdot \nabla G(\underline{x}))
\end{pmatrix}<2\}\\
&=\{\x\in\mathbb{A}_{\Q}^{n} \: | \:F(\underline{x})=G(\underline{x})=0, \;\rank\begin{pmatrix}
\nabla F(\underline{x})\\
\nabla G(\underline{x})
\end{pmatrix}<2\},
\end{align*}
by Euler's identity. Hence, by \eqref{eq: dim U : affine dimension theorem}, we have
\[\dim(U\cap D)= \sigma+1,\]
and so 
\begin{equation}
    \label{eq: dim U : affine dimension theorem: 2}
    \dim(U)\leq n+\sigma+1.
\end{equation}
Finally we let $F=F^{(0)}$, $G=G^{(0)}$. If
\[\dim(V_{\infty, i})> n+\sigma+1-i,\] 
then, by definition we have that
\begin{align*}
    \dim ( \{(\underline{x},\h)\in\mathbb{A}_{\Q}^{2n}\: |\: s_{\infty}(F_{\h}^{(0)}, G_{\h}^{(0)})\geq i-1, \: \x\in s_{\infty}(F_{\h}^{(0)}, G_{\h}^{(0)})\})&> (n+\sigma+1-i) + i\\
    \\&=n+\sigma+1.
\end{align*}
It is easy to check that
\[\{(\underline{x},\h)\in\mathbb{A}_{\Q}^{2n}\: |\: s_{\infty}(F_{\h}^{(0)}, G_{\h}^{(0)})\geq i-1, \: \x\in s_{\infty}(F_{\h}^{(0)}, G_{\h}^{(0)})\}\subset U((F^{(0)}, G^{(0)})),\]
and so
\[\dim(U(F^{(0)}, G^{(0)}))>n+\sigma+1.\]
This contradicts \eqref{eq: dim U : affine dimension theorem: 2}. Hence $\dim(V_{\infty, i}) \leq n+\sigma+1-i$ as required.
\end{proof}
We can now use \eqref{eq: Finalisation of Poisson: H_s subset defined by V_p,i} and the argument found in \cite[Section 7]{Browning-Heath-Brown09} to get the following upper bound for $\#\mathcal{H}$:

\begin{equation}
    \label{E71}
    \#\mathcal{H}_s\ll q^{\ve} 
    \max_{s+1\leq\eta\leq n}\frac{H^{n-\eta}}{\prod_{i=\eta+1}^n(b_{1,i}b_{2,i}^{1/2}\tilde{q}_{3,i})^{i-\eta}},
\end{equation}
where
\[\tilde{q}_{3,i}:=\prod_{\substack{p|q_3\\s_p(\underline{h})=i-1}} p.\]
For convenience set
\begin{equation}
    \label{eq: Finalisation of Poisson: U_s definition}
    \mathcal{U}_s:=\sum_{\underline{h}\in\mathcal{H}_s} T_{\underline{h}}(q,\underline{z}),
\end{equation}
(recall that $\sum_{\h\ll H}T_{\h}(q,\z)\ll P^{\ve} \sum_{s=-1}^{n-1} \mathcal{U}_s$ by \eqref{eq: Finalisation of Poisson: h sum bounded by H_s sum: 1}). We will use \eqref{E71} to bound $\mathcal{U}_s$ later, but for now, we need to find a bound on $|T_{\underline{h}}(q,\underline{z})|$. To do this we will need to apply the hyperplane intersections lemma, namely Lemma \ref{lem:hyperplane} and then apply the bounds found in Propositions \ref{proposition:Poissonquadfinal} and \ref{proposition:n=1Poissonquadfinal}.

Let $\eta$ be chosen so as to maximize the expression in \eqref{E71}. Let $\Pi$ be the set of primes $p|q$ so that $r=\omega(q)$, and $\{F_1,F_2\}=\{F_{\underline{h}}^{(0)},G_{\underline{h}}^{(0)}\}.$ We may now invoke Lemma \ref{lem:hyperplane} to find a lattice $\Lambda_{\eta}$ of rank $n-\eta$ and a basis $\underline{e}_1,\cdots,\underline{e}_{n-\eta}$ for $\Lambda_{\eta}$ s.t. for every $\underline{t}\in\mathbb{Z}^n$, the polynomials
\[\tilde{F}_{\h,\underline{t}}(\y):=F_{\underline{h}}^{(0)}(\underline{t}+\sum_{i=1}^{n-\eta} y_i\underline{e}_i),\quad \tilde{G}_{\h,\underline{t}}(\y):=G_{\underline{h}}^{(0)}(\underline{t}+\sum_{i=1}^{n-\eta} y_i\underline{e}_i)\]
satisfy
\begin{equation}
    \label{E72}
    s_{\nu}(\tilde{F}_{\h,\underline{t}},\tilde{G}_{\h,\underline{t}})=\max\{-1, s_{\nu}(F^{(0)}_{\underline{h}},G^{(0)}_{\underline{h}})-\eta\},
\end{equation}
for every $\nu\in\{\infty\}\cup\Pi_{cr}.$ We also note that $\deg(\tilde{F}_{\h,\underline{t}})=\deg(\tilde{G}_{\h,\underline{t}})=2$ (this is necessary in order to be able to use the bounds from the previous section). In order to apply the bounds found in the previous section, we must first fix our choice of basis $\{\underline{e}_1,\cdots,\underline{e}_n\}$, and so we will use the same process as earlier when we fixed $(b_{1,0},\cdots, q_{3,n})$: We recall that the $L$ used in \eqref{eq:cond4} is of size $L=O(r+1)=O(\log(q))$. Therefore there are at most $O(\log(q)^n)$ choices of basis satisfying \eqref{eq:cond4}, and so by \eqref{eq: Finalisation of Poisson: U_s definition}, and the triangle inequality, there is one such choice for which
\begin{equation}
    \label{eq: Finalisation of Poisson: U_s bound in terms of H_s' sum: 2}
    \mathcal{U}_s\ll \log(q)^n\sum_{\underline{h}\in\mathcal{H}_s}{}^{'}|T_{\underline{h}}(q,\underline{z})|\ll P^{\ve}\sum_{\underline{h}\in\mathcal{H}_s}{}^{'}|T_{\underline{h}}(q,\underline{z})|,
\end{equation}
where $\sum'$ denotes that the sum is taken over the vectors $\underline{h}$ in the original sum for which $\eqref{E72}$ holds for our chosen basis $\{\underline{e}_1,\cdots,\underline{e}_n\}$. For such $\underline{h}$, we can now separate the $\underline{x}$ sum defining $T_{\underline{h}}(q,\underline{z})$ into cosets $\underline{t}+\Lambda_\eta$ of $\Lambda_\eta$, where $\underline{t}$ runs over some subset $T_\eta\subset \Z^n$. All that is left to do is use Proposition \ref{proposition:Poissonquadfinal} (or Proposition \ref{proposition:n=1Poissonquadfinal} for $\eta=n-1$) on each coset, and determine the size of $T_\eta$, as this bounds the number of cosets that we have. We claim that if $\Lambda_{\eta}$ is chosen according to Lemma \ref{lem:hyperplane}, then $\# T_\eta=O(P^\eta)$. Indeed, consider $\underline{x}$ in terms of our basis $\underline{e}_1,\cdots,\underline{e}_n$, i.e. writing
\[\underline{x}=\sum_{i=1}^n u_i\underline{e}_i.\]
Now, if $\pi_i$ denotes the projection onto the orthogonal subspace spanned by the vectors $\underline{e}_j$, $i\neq j$, we have
\[||\underline{x}||\geq ||\pi_i \underline{x}||=|u_i|\cdot||\pi\underline{e}_i||=|u_i|\frac{|\det(\Lambda)|}{|\det(\Lambda_i)|},\]
where $\Lambda\subset\mathbb{Z}^n$ denotes the full-dimensional lattice spanned by $\underline{e}_1,\cdots,\underline{e}_n$ and $\Lambda_i$ the lattice spanned by each $\underline{e}_j\neq \underline{e}_i$. Now by \eqref{eq:cond4} and \eqref{eq:cond5}, we get that
\begin{equation}
    \label{E73}
    |u_i|\ll \frac{||\underline{x}||}{L}.
\end{equation}
Therefore we must have $|u_i|\ll P$ since we need $||\underline{x}||\ll P$. Hence, since $\Lambda_{\eta}=<\e_1,\cdots, \e_{n-\eta}>$, we may conclude that $\underline{t}$ is of the form $\underline{t}=\sum_{i=n-\eta+1}^{n} \lambda_i\underline{e}_i$ s.t. $|\lambda_i|\ll P$. We now choose $T_\eta$ to be the collection of such $\underline{t}$ leading us to conclude that $\#T_{\eta}=O(P^{\eta})$.

In order to complete the hyperplane intersections step, we will now define new weight functions in $n-\eta$ variables. In particular, we set
\begin{equation*}
    \tilde{\omega}_{\h,\un{t}}(y_1,\cdots, y_{n-\eta}):=\omega_{\h}\Bo P^{-1}\un{t}+L^{-1}\sum_{i=1}^{\eta} y_i\un{e}_i\Bc.
\end{equation*}
This gives us 
\begin{equation}
    \label{eq: Finalisation of Poisson: T_h(q,z) bounded by hyperplane intersections: 3}
    |T_{\h}(q,\z)|\leq \sum_{\un{t}\in T_{\eta}} |T_{\h,\un{t}}(q,\z)|, \mbox{ where }  T_{\h,\un{t}}(q,\z)=T_{n-\eta}(q,\z;\tilde{F}_{\h,\un{t}},\tilde{G}_{\h,\un{t}},\tilde{\omega}_{\h,\un{t}},P/L).
\end{equation}
We now need to verify that $T_{\h,\un{t}}(q,\z)$ and $\tilde{\omega}_{\h,\un{t}}$ satisfy the various properties that we assumed in order to acquire the results we have found in the previous sections. Firstly, we refer to the proof Proposition 2 of \cite{Browning-Heath-Brown09} to see that $\tilde{\omega}_{\h,\un{t}}\in\mathcal{W}_{n-\eta}$ for $\un{t}\ll P$. We also see that
\[||\tilde{F}_{\h,\un{t}}||_{P/L}\ll L^2||F_{\h}||P\ll P^{\ve}H||F||_p\ll P^{\ve}H,\]
and similarly $||\tilde{G}_{\h,\un{t}}||_{P/L}\ll P^{\ve}H$. Next, we note that $\eta\geq s+1$, and so we automatically have $s_{\infty}(\tilde{F}_{\h,\ut},\tilde{G}_{\h,\ut})=-1$. This covers all conditions that we have needed in the previous sections on exponential sums. 

Therefore, by \eqref{eq: Finalisation of Poisson: U_s bound in terms of H_s' sum: 2}, \eqref{eq: Finalisation of Poisson: T_h(q,z) bounded by hyperplane intersections: 3}, and \eqref{E71}:
\begin{align}
    \mathcal{U}_s&\ll P^{\ve}\sum_{\underline{h}\in\mathcal{H}_s}{}^{'}\sum_{\un{t}\in T_{\eta}} |T_{\h,\un{t}}(q,\z)| \nonumber \\
    &\ll P^{\ve} \#\mathcal{H}_s \# T_{\eta} \max_{\underline{h}\in\mathcal{H}_s}{}^{'}\max_{\un{t}\in T_{\eta}} |T_{\h,\un{t}}(q,\z)| \nonumber\\
    \label{eq: Finalisation of Poisson: U_s final bound: 4}
    &\ll \max_{s+1\leq\eta\leq n}\frac{P^{\eta+\ve}H^{n-\eta}}{\prod_{i=\eta+1}^n((b_{1,i}b_{2,i}^{1/2}\tilde{q}_{3,i}))^{i-\eta}}\cdot \max_{\h\in\mathcal{H}_s}{}'\max_{\un{t}\in T_{\eta}} T_{\h,\un{t}}(q,\z).
\end{align}
Recall that 
\begin{equation}
    \label{eq: Finalisation of Poisson: h sum bounded by U_s}
    \sum_{\h\ll H}T_{\h}(q,\z)\ll P^{\ve} \sum_{s=-1}^{n-1} \mathcal{U}_s\ll P^{\ve} \max_{-1\leq s\leq n-1} \mathcal{U}_s.
\end{equation}
by \eqref{eq: Finalisation of Poisson: h sum bounded by H_s sum: 1} and \eqref{eq: Finalisation of Poisson: U_s definition}. We will therefore be able to attain our final bound for $\sum_{\h\ll H}T_{\h}(q,\z)$ if we can find a bound for $T_{\h,\un{t}}(q,\z)$ by \eqref{eq: Finalisation of Poisson: U_s final bound: 4}.

We may use Propositions \ref{proposition:Poissonquadfinal} and \ref{proposition:n=1Poissonquadfinal} to bound $T_{\h,\un{t}}(q,\z)$ from above when $\eta<n-1$ and $\eta=n-1$ respectively. When $\eta=n$, we may proceed by a much simpler argument to bound $T_{\h,\ut}(q,\z)$. We trivially have
\[|T_{\h}(q,\z)| \leq \starsum_{\ua}\sum_{\underline{y}\in\mathbb{Z}^n} \omega_{\h}(\y/P) \ll q^2 P^n,\]
and by Lemma \ref{T71} ($v=\infty$, $i=n$), we have that 
\[\#\{\h\in \mathbb{A}_{\Q}^n \:|\: s_{\infty}(\h)= n-1\}= O(1).\]
Hence
\begin{align}
    \label{E75}
    \sum_{\substack{\h\ll H \\ s_{\infty}(\h)=n-1}}|T_{\h (q,\z)}|\ll q^2P^n.
\end{align}
Returning to $\eta\leq n-1$: By \eqref{E72}, we may use the proof of Proposition 2 from \cite{Browning-Heath-Brown09} to conclude that for every $\un{t}\in T_{\eta}$, we have
\[D_{\tilde{F}_{\h,\ut},\tilde{G}_{\h,\ut}}(b_{1,i}b_{2,i}\tilde{q}_{3,i})\ll q^{\ve}\prod_{i=\eta+1}^n (b_{1,i}b_{2,i}^{1/2}\tilde{q}_{3,i})^{i-\eta},\]
when $\eta<n-1$. When $\eta=n-1$, $(p,\cont(\tilde{F}_{\h,\ut}),\cont(\tilde{G}_{\h,\ut}))=p$ if and only if $p|\tilde{F}_{\h,\ut},\tilde{G}_{\h,\ut}$ or $p\ll P^{\ve}$. In particular, $p| b_{1,n}b_{2,n}^{1/2}\tilde{q}_{3,n}$ or $p\ll P^{\epsilon}\asymp q^{\ve}$, and so we again have
\[D_{\tilde{F}_{\h,\ut},\tilde{G}_{\h,\ut}}(b_{1,n}b_{2,n}\tilde{q}_{3,n})\ll q^{\epsilon}b_{1,n}b_{2,n}^{1/2}\tilde{q}_{3,n}.\] 
Therefore, by \eqref{eq: Finalisation of Poisson: U_s final bound: 4} \eqref{eq: Finalisation of Poisson: h sum bounded by U_s}, and Propositions \ref{proposition:Poissonquadfinal} and \ref{proposition:n=1Poissonquadfinal} and \eqref{E75}, we may conclude the following:
\Pb
\label{T76}
Let $q<P^2$, and let
\[\mathcal{Y}_{\eta}:=\frac{H^{n-\eta}}{q^{(n-\eta)/2}}b_1^{-1}\Bo V^{n-\eta}+V^{n-\eta-1}b_1^{1/2}b_3^{1/3}q_4^{1/2} + b_1^{1/2}b_3^{(n-\eta)/3}q_4^{(n-\eta)/2}\Bc,\]
for $\eta\in\{0,\cdots, n-2\}$ and let
\[\mathcal{Y}_{n-1}:=\frac{H}{q^{1/2}}b_1^{-1/2}(V+b_3^{1/3}q_4^{1/2}).\]
Then
\[\sum_{\h\in H_s}|T_{\h}(q,\z)|\ll q^2P^{n+\ve}\Bo 1+\sum_{\eta=0}^{n-1} \mathcal{Y}_{\eta}\Bc.\]
\Pe

\section{Weyl Differencing}
\label{Weyl}
In this section, we will derive several auxiliary bounds using Weyl differencing which will serve as complimentary bounds to the more powerful ones coming from van der Corput differencing and Poisson summation. We will need a bound which uses Weyl differencing twice, as well as two bounds which come from applying variations of van der Corput differencing once, followed by a single application of Weyl differencing on the resulting quadratic exponential sum. In the case of the former: The topic of performing Weyl differencing repeatedly on a system of forms has already been covered extensively by Lee in the context of function fields \cite{Lee11}. The Weyl differencing arguments that are used in his paper do not rely on being in a function fields setting, and so we may freely invoke the results in \cite[Section 3]{Lee11}. In particular, upon setting $d=3$ and $R=2$, an application of \cite[Lemma 3.7]{Lee11} gives us
\begin{equation*}
    |S(\underline{a}/q+\underline{z})|\ll P^{n+\ve}\Big{(} P^{-4}+q^2|\underline{z}|^2+q^2P^{-6}+q^{-1}\min\{1,\frac{1}{|\underline{z}|P^3}\}\Big{)}^{(n-\sigma'-1)/16},
\end{equation*}
where 
\begin{equation}
    \label{eq: sigma' def}
    \sigma'=\sigma'(F^{(0)},G^{(0)}):= \dim \{\x\in\mb{P}_{\C}^{n-1} \: : \: \rank\begin{pmatrix}
    \nabla F^{(0)}(\x) \\ \nabla G^{(0)}(\x)
    \end{pmatrix}<2\},
\end{equation}
and $\Fnull$, $\Gnull$ are defined to be the top forms of $F$ and $G$ respectively. However, we may use Lemma \ref{P: background on a pair of quadrics: sigma'<= sigma+1} to conclude that $\sigma'\leq \sigma(\Fnull,\Gnull)+1$. Hence, we arrive at the following:
\Pb[Weyl/Weyl]
\label{T815}
Let $F$, $G$ be cubic polynomials such that
\[\|\Fnull\|,\|\Gnull\|\asymp 1,\]
and $\sigma(F^{(0)},G^{(0)})=\sigma$. Then:
\[|S(\underline{a}/q+\underline{z})|\ll P^{n+\ve}\Big{(} P^{-4}+q^2|\underline{z}|^2+q^2P^{-6}+q^{-1}\min\{1,\frac{1}{|\underline{z}|P^3}\}\Big{)}^{(n-\sigma-2)/16}.\]
\Pe

We now aim to bound the exponential sum, 
\[T(q,\z):=\starsum_{\ua}\sum_{\x\in \Z^n} \omega(\x/P) e([a_1/q+z_1] F(\x) + [a_2/q+z_2] G(\x))\]
that we get after performing van der Corput differencing once. In this case, $F$ and $G$ are quadratic polynomials such that $\|\Fnull\|,\|\Gnull\|\ll H$, for some $1\leq H\leq P$. The aforementioned work of Lee has also kept an explicit dependence of the dependence on $H$ throughout the Weyl differencing process. In particular the following lemma is a direct consequence of \cite[Equation (3.20)]{Lee11}:
\Pb[van der Corput/Weyl]
\label{T814}
Let $F$, $G$ be quadratic polynomials such that 
\[\|\Fnull\|,\|\Gnull\|\leq H,\]
and let $\sigma:=\sigma(\Fnull,\Gnull)$. Then:
\begin{align*}
    |T(\ua,q,\z)|\ll P^{n+\ve}\Big{(} P^{-2}+q^2H^2|\underline{z}|^2+q^2&P^{-4}+q^{-1}H^2\min\{1,\frac{1}{H|\underline{z}|P^2}\}\Big{)}^{(n-\sigma-2)/4}.
\end{align*}
\Pe
We refer the readers not familiar with the function field version to the first author's PhD thesis \cite[Section 6]{Northey} for a detailed proof of this result.
\section{Minor Arcs Estimate}
\label{sec:minor}
In this Section, we will combine all of the approaches we have been developing throughout this paper to finally prove Proposition \ref{P02}. In particular we aim to show that, provided that $F,G$ intersect smoothly, and $n\geq 39$, we have
\[S_{\mathfrak{m}}=O(P^{n-6-\delta}),\]
for some $\delta>0$. To achieve this, we will split the $q$ sum of $S_{\mathfrak{m}}$ into square-free, cube-free square, 4th power-free cube, and 4th power-full parts ($b_1,b_2,b_3, q_4$ respectively), and further split these sums into $O(P^{\ve}$) dyadic ranges. In particular, we will be focusing on the sum
\[\Dy:=\sum_{b_1=R_1}^{2R_1}\sum_{b_2=R_2}^{2R_2}\sum_{b_3=R_3}^{2R_3}\sum_{q_4=R_4}^{2R_4}\starsum_{\ua}\int_{t\ll|\z|\ll t} | S_{\ua}(q,\z) |\:d\z,\]
where, $\un{R}:=(R_1,R_2,R_3)$, and 
\begin{equation}
    \label{E91}
    q=b_1b_2b_3q_4, \quad R<q\leq 2R,\quad R_i<b_i\leq 2R_i, \,i\in\{1,2,3\},\quad R_4<q_4\leq 2R_4,
\end{equation}
(the latter is apparent from the definition of $\Dy$, but it will be helpful to be able to reference this later). From the definition of $S_{\mathfrak{m}}$, we need only consider $\Dy$ when
\begin{equation}
    \label{E92}
    R\leq Q, \quad R_1R_2R_3R_4\asymp R,\quad 0\leq t\leq (RQ^{1/2})^{-1}. 
\end{equation}
Likewise, we must also either have
\begin{equation}
    \label{E93}
    R\geq P^{\Delta} \quad \mbox{or}\quad t\geq P^{-3+\Delta}.
\end{equation}
Now, upon bounding $S_{\ua}(q,\z)$ trivially for $t\leq P^{-5}$, we see that
\begin{equation}
    \label{E94}
    S_{\mathfrak{m}}\ll P^{\ve} \max_{\substack{R,\un{R},\ut \\ \eqref{E92}, \eqref{E93}, \, t> P^{-5}}} \Dy + O(P^{n-7}).
\end{equation}
Our aim in this section is to show that $\Dy\ll P^{n-6-\delta}$ for some $\delta>0$, as this is sufficient to bound our minor arcs by $P^{n-6-\delta}$ by \eqref{E94}. Note that this is equivalent to proving that
\begin{equation}
    \log_P(\Dy):=B_P(\phi, \tau, \un{\phi})\leq n-6-\delta,
\end{equation}
for some $\delta>0$, and for $P$ sufficiently large (so that the implied constant in \eqref{E94} becomes negligible), where
\begin{equation}
    \phi:= \log_P(R), \quad \tau:= \log_P(t), \quad \log_P(R_i):=\phi_i, \; i\in\{1,2,3,4\}.
\end{equation}
Finally, as mentioned in Section \ref{C: Initial setup} we will choose 
\[Q\asymp P^{3/2},\]
from this point onwards (this choice will be explained in Section \ref{S: Explaining Q}). With this last bit of setup, we are now ready to start the process of bounding $S_{\mathfrak{m}}$. We will do this by applying a total of five different bounds based on different combinations of van der Corput differencing, Weyl differencing, and Poisson summation to bound $\Dy$ for different ranges of $R$ and $t$. In each range, we will take the minimum of all available bounds. To do so for all possible values of $R$ and $t$ is incredibly complicated. Therefore, instead of the tedius process of manually comparing and simplifying these bounds, a route which is traditionally taken, we take the idea of automatising this process as in \cite{Marmon_Vishe} one step further. We will directly feed these bounds to the existing Min-Max algorithm in Mathematica and obtain an explicit value of the minimum value of our bounds on the Minor arcs. We have also verified this value using an open source algorithm \cite{Northey1} designed by the first author. In its current form, this algorithm is significantly less efficient than inbuilt one in Mathematica, but it allowed the authors to double check the bounds coming from this inbuilt function.

Throughout this section, we will use the following Lemma:
\Lb
\label{T95}
Let $q=b_1b_2\cdots b_k q_{k+1}$, where $b_i$ is the i-th power, (i+1)th powerfree part of $q$ and let $q_{k+1}$ be the (k+1)th power-full part of $q$. Then 
\[\sum_{\substack{b_i=R_i\\ i\in\{1,\cdots,\, k\}}}^{2R_i}\sum_{q_{k+1}=R_{k+1}}^{2R_{k+1}} b_1^{a_1} b_2^{a_2}\cdots b_k^{a_k} q_3^{a_{k+1}} \ll \prod_{i=1}^{k+1} R_i^{a_i+1/i},\]
for every $a_1,\cdots,a_{k+1}\geq 0$.
\Le
The proof of this lemma is standard, and is similar to \cite[Lemma 20]{Browning-Heath-Brown09} so we omit here. This Lemma enables us to get away with using slightly worse exponential sum bounds for the perfect square and cube-full parts of $q$ (close inspection of the bounds found in Section \ref{Sec: Exponential Sums: Initial Bounds} will show that our bounds in these cases are indeed worse). We have stated Lemma \ref{T95} in this level of generality because it will be useful for us when considering the singular series of the major arcs. We will spend the remainder of this section finding our final bounds for the minor arcs in the case when $F$,$G$ are non-singular. 
\subsection{Averaged van der Corput/Poisson}
In this section, we will find a bound for $B_P(\phi, \tau, \un{\phi}):=\log_P(\Dy)$ by combining the improved averaged van der Corput differencing process with Poisson summation. We will aim to show that $B_P(\phi, \tau, \un{\phi})\leq n-6-\delta$ for some $\delta>0$, provided that $n$ is sufficiently large. By Proposition \ref{T45}, we have

\begin{align}
    \label{E913}
    \Dy\ll_{\ve,N} P^{-N}+\sum_{q,\eqref{E91}}H^{-n/2+1}P^{n/2-1+\ve}q((HP^2)^{-1}+t)^2\Big{(}\max_{\uz}\sum_{|\h|\ll H}|T_{\underline{h}}(q,\underline{z})|\; \Big{)}^{1/2},
\end{align}
where $|\z| \asymp \max\{(HP^2)^{-1},t\}$. By Proposition $\ref{T76}$ we have
\begin{equation}
\label{eq:aux}
\sum_{\h\ll H}|T_{\h}(q,\z)|\ll q^2P^{n+\ve}\Co 1+\sum_{\eta=0}^{n-1} \mathcal{Y}_{\eta}\Cc,
\end{equation}

where
\begin{equation}
\label{eq: Minor Arcs bound: Y definition}
    \mathcal{Y}_{\eta}(q,b_1,b_3,q_4,|\z|):=\frac{H^{n-\eta}}{q^{(n-\eta)/2}}b_1^{-1}\Bo V^{n-\eta}+b_1^{1/2}b_3^{1/3}q_4^{1/2}V^{n-\eta-1} + b_1^{1/2}b_3^{(n-\eta)/3}q_4^{(n-\eta)/2}\Bc
\end{equation}
for $\eta\in\{0,\cdots, n-2\}$,
\begin{align}
    \label{eq: Minor Arcs bound: Y definition: n-1}
    \mathcal{Y}_{n-1}:=\frac{H}{q^{1/2}}b_1^{-1}(b_1^{1/2}V+b_1^{1/2}b_3^{1/3}q_4^{1/2}),
\end{align}
and
\begin{align}
    \label{eq: minor arcs bound, Poisson H definition}
    H(q)&:=\max\{P^{10/(n-2)+\ve'}, P^{2/(n+2)+\ve'}q^{6/(n+2)}\}\\
    \label{eq: minor arcs bound, Poisson V definition}
    V(q,|\z|)&:=1+qP^{\ve-1}\max\{1,\sqrt{HP^2|\z|}\}.
\end{align}
Our choice of $H$ is informed by the fact that $H$ needs to be large enough so that the contribution from the $\eta=n$ term (or equivalently the term $1$ in \eqref{eq:aux}) is satisfactory . We note that $V(q,|\z|)\asymp V(q,t)$ in the range of $\z$ that we have. Hence (assuming $N$ is chosen sufficiently large):
\begin{align}
    \Dy&\ll \sum_{q,\eqref{E91}}H^{-n/2+1}P^{n-1+\ve}q^2((HP^2)^{-1}+t)^2 \Bo 1+\sum_{\eta=0}^{n-1}\mathcal{Y}_{\eta}(q,b_1,b_3,q_4,t)\Bc^{1/2}\nonumber\\
    \label{E914}
    &\ll P^{n-1+\ve}\sum_{\substack{b_i=R_i\\i\in\{1,2,3\}}}^{2R_i}\sum_{q_4=R_4}^{2R_4}R^2H^{-n/2+1}((HP^2)^{-1}+t)^2 (1+\mathcal{Y}_{0}+\cdots+\mathcal{Y}_{n-1})^{1/2}\\
    \label{E9140}
    &\ll P^{n-1+\ve}\mathcal{R}R_1^{1/2}R^{2}H^{-n/2+1} ((HP^2)^{-1}+t)^2 (1+\mathcal{Y}_{0}+\cdots+\mathcal{Y}_{n-1})^{1/2},
\end{align}
where $\mathcal{R}:=R_1^{1/2}R_2^{1/2}R_3^{1/3}R_4^{1/4}$, $H=H(R)$, $V=V(R,t)$, and $\mathcal{Y}_{i}=\mathcal{Y}_i(R,R_1,R_3,R_4,t)$ in \eqref{E914}-\eqref{E9140}. For the most part, we will continue to use $H$, $V$, and $\mathcal{Y}_{i}$ instead of $H(R)$, $V(R,t)$ and $\mathcal{Y}_i(R,R_1,R_3,R_4,t)$ to avoid making the algebra more complicated than it already is. The final assertion is by Lemma \ref{T95}. 

We will start by simplifying the right-most bracket:
\Lb
\label{L: Minor Arcs bound: Y bracket geometric series}
For every $R,R_1,R_3,R_4,t$ satisfying \eqref{E92}, we have
\[(1+\mathcal{Y}_{0}+\cdots+\mathcal{Y}_{n-1})\ll (1+\mathcal{Y}_{0}).\]
\Le

\begin{proof}
For this proof, we will introduce the following sequence:
\begin{align*}
    \mathcal{Y}_{\eta}':= \frac{H^{n-\eta}}{R^{(n-\eta)/2}}R_1^{-1}\Bo R_1^{\eta/n}V^{n-\eta}+R_1^{1/2}R_3&^{1/3-\eta/3n}R_4^{1/2-\eta/2n}V^{n-\eta-1+\eta/n}\\
    &\qquad\qquad\qquad\qquad+ R_1^{1/2}R_3^{(n-\eta)/3}R_4^{(n-\eta)/2}\Bc.
\end{align*}
We will prove that this sequence has the following three properties:
\begin{enumerate}
    \item $\mathcal{Y}_{\eta}\ll \mathcal{Y}_{\eta}'$ for every $\eta\in\{0,\cdots, n-1\}$.
    \item $\mathcal{Y}_{0}'=\mathcal{Y}_{0}$, and $\mathcal{Y}_{n}'\asymp 1$.
    \item $\sum_{\eta=0}^n\mathcal{Y}_{\eta}'$ is a sum of three geometric series.
\end{enumerate}
Verifying these three facts is sufficient to complete the proof since properties 1 and 2 imply that $(1+\mathcal{Y}_{0}+\cdots+\mathcal{Y}_{n-1})\ll (\mathcal{Y}_{0}'+\cdots+\mathcal{Y}_{n-1}'+\mathcal{Y}_n')$, property 3 implies that $(\mathcal{Y}_{0}'+\cdots+\mathcal{Y}_{n-1}'+\mathcal{Y}_n')\ll (\mathcal{Y}_{0}'+\mathcal{Y}_{n}')$, and property 2 implies that $(\mathcal{Y}_{0}'+\mathcal{Y}_{n}')=(1+\mathcal{Y}_{0})$.

For property 1, we note that the term outside of the bracket of $\mathcal{Y}_{\eta}'$ is equal to the analogous term in $\mathcal{Y}_{\eta}$. It therefore suffices to bound each term in the bracket of $\mathcal{Y}_{\eta}$ from above by a term in $\mathcal{Y}_{\eta}'$: We clearly have $V^{n-\eta}\leq R_1^{\eta/n}V^{n-\eta}$ when $\eta\in\{1,\cdots,n-2\}$ and $R_1^{1/2}V\leq R_1^{(n-1)/n}V$ for every $n\geq 1$. The third term of $\mathcal{Y}_{\eta}$ and $\mathcal{Y}_{\eta}'$ coincide with each other for every $\eta\in\{1,\cdots,n-1\}$.

As for the middle term, \[R_1^{1/2}R_3^{1/3}R_4^{1/2}V^{n-\eta-1}\leq R_1^{1/2}R_3^{1/3-\eta/3n}R_4^{1/2-\eta/2n}V^{n-\eta-1+\eta/n}\]
 if and only if $V\geq R_3^{1/3}R_4^{1/2}$. However, if $V<R_3^{1/3}R_4^{1/2}$, then 
\[R_1^{1/2}R_3^{1/3}R_4^{1/2}V^{n-\eta-1}\leq R_1^{1/2}R_3^{(n-\eta)/3}R_4^{(n-\eta)/2},\] 
which is the third term of $\mathcal{Y}_{\eta}'$. Hence we have $\mathcal{Y}_{\eta}\ll \mathcal{Y}_{\eta}'$.

Property 2 is trivial so we will move to verifying property 3. Again, we will go term by term: Let 
\[\mathcal{Y}_{\eta,1}':=\frac{H^{n-\eta}}{R^{(n-\eta)/2}}R_1^{-1}\cdot R_1^{\eta/n}V^{n-\eta}.\]
Then 
\[\mathcal{Y}_{\eta+1,1}'=HR^{-1/2}R_1^{1/n}V^{-1}\mathcal{Y}_{\eta,1}'.\]
If we similarly define $\mathcal{Y}_{\eta,2}'$ and $\mathcal{Y}_{\eta,3}'$ in the obvious way, then we see that 
\[\mathcal{Y}_{\eta+1,2}'=HR^{-1/2}R_3^{-1/3n}R_4^{-1/2n}V^{-1+1/n}\mathcal{Y}_{\eta,1}', \quad \mathcal{Y}_{\eta+1,3}'=HR^{-1/2}R_3^{-1/3n}R_4^{-1/2n}\mathcal{Y}_{\eta,3}'.\]
Hence we may represent $\sum \mathcal{Y}_{\eta}'$ as a sum of three geometric series, as required. This completes the proof.
\end{proof}
We may use Lemma \ref{L: Minor Arcs bound: Y bracket geometric series} to conclude that 
\begin{equation}
    \label{eq: minor arcs bound: AVdc-Poisson Dyadic: 1}
    \Dy\ll P^{n-1+\ve}\mathcal{R}R_1^{1/2}R^{2}H^{-n/2+1} ((HP^2)^{-1}+t)^2(1+\mathcal{Y}_{0})^{1/2}.
\end{equation}
We now aim to simplify this expression further by showing that\\
$V^{n}\leq R^{1/2}R_3^{1/3}R_4^{1/2}V^{n-1}$, or equivalently that $V\leq R^{1/2}R_3^{1/3}R_4^{1/2}$. Doing this, will let us show the following:
\Lb
\label{L: Minor Arcs bound: Y bracket bound}
Let $Q=P^{3/2}$ and let $H$ and $V$ be defined as above. If $n\geq 23$ then
\[V\leq R^{1/2}.\]
In particular
\begin{align*}
    \mathcal{Y}_{0}&\leq R^{(1-n)/2}R_1^{-1}H^n(R^{1/2} V^{n-1} + R_3^{n/3-1/2}R_4^{(n-1)/2})\\
\end{align*}
\Le
\begin{proof}
As mentioned we will firstly prove that $V\leq R^{1/2}$. Recall that 
\[V=V(R,t)=1+RP^{-1+\ve}\max\{1,H(R)P^2t\}^{1/2}.\]
We clearly have $1\leq R^{1/2}$. 

When $V=RP^{-1+\ve}$, we note that $R>P^{1-\ve}$ otherwise $RP^{-1+\ve}\leq 1$, and so $V$ cannot be equal to $RP^{-1+\ve}$. Furthermore we see that $R\leq Q=P^{3/2}$, or equivalently $P^{-1}\leq R^{-2/3}$. Hence, provided that $\ve$ is chosen small enough so that $P^{\ve}\leq R^{1/6}$, then we also have $P^{-1+\ve}<R^{-1/2}$. Since $R>P^{1-\ve}$, $\ve<0.1$ would suffice for example.

Finally, we consider when $V=RP^{-1+\ve}(H(R)P^2t)^{1/2}$. In this case, since $t\leq (RQ^{1/2})^{-1}$,
\[V\leq R^{1/2} Q^{-1/4} P^{\ve}\max\{P^{5/(n-2)+\ve}, P^{1/(n+2)} R^{3/(n+2)}\}.\]
But since $R\leq Q$, $P<Q$ (and $5/(n-2)>4/(n+2)$), we have 
\[V< R^{1/2}Q^{5/(n-2)+\ve-1/4}<R^{1/2},\]
provided $n\geq 23$.

This concludes the proof that $V\leq R^{1/2}$. For the second statement of the lemma, we start by noting that $V^{n}\leq R^{1/2}V^{n-1}$, and so by \eqref{eq: Minor Arcs bound: Y definition} and \eqref{E92}:
\begin{align*}
    \mathcal{Y}_{0}(R,R_1,R_3,R_4,t):&=\frac{H^{n}}{R^{n/2}}R_1^{-1}\Bo V^{n}+R_1^{1/2}R_3^{1/3}R_4^{1/2}V^{n-1} +R_1^{1/2}R_3^{n/3}R_4^{n/2}\Bc\\
    &\leq \frac{H^{n}}{R^{n/2}}R_1^{-1}\Bo V^{n}+R^{1/2}V^{n-1} +R^{1/2}R_3^{n/3-1/2}R_4^{(n-1)/2}\Bc\\
    &\ll \frac{H^{n}}{R^{n/2}}R_1^{-1}\Bo R^{1/2}V^{n-1} +R^{1/2}R_3^{n/3-1/2}R_4^{(n-1)/2}\Bc\\
    &=R^{(1-n)/2}R_1^{-1}H^n(V^{n-1} + R_3^{n/3-1/2}R_4^{(n-1)/2}).
\end{align*}
\end{proof}
Hence, if we let
\begin{align}
    \label{eq: minor arcs bound: X_1 definition}
    \mathcal{X}_{1}(R,R_3,R_4,t)=\mathcal{X}_{1}&:=R^{(1-n)/2} H(R)^n V(R,t)^{n-1},\\
    \label{eq: minor arcs bound: X_2 definition}
    \mathcal{X}_{2}(R,R_3,R_4)=\mathcal{X}_{2}&:=R^{(1-n)/2}R_3^{n/3-1/2}R_4^{(n-1)/2} H(R)^n,
\end{align}
then we now may Lemma \ref{L: Minor Arcs bound: Y bracket bound} and \eqref{eq: minor arcs bound: AVdc-Poisson Dyadic: 1} to bound $\Dy$ as follows:
\begin{align}
    \Dy&\ll P^{n-1+\ve}\mathcal{R}R^{2}H^{-n/2+1} ((HP^2)^{-1}+t)^2(R_1+\mathcal{X}_{1}+\mathcal{X}_{2})^{1/2})\nonumber\\
    \label{eq: minor arcs bound: AVdc-Poisson Dyadic: 2}
    &\ll P^{n-1+\ve}R^{5/2}H^{(2-n)/2} \max\{(HP^2)^{-1},t\}^2\max\{R,\mathcal{X}_{1},\mathcal{X}_{2}\}^{1/2}.
\end{align}
Finally, note that $\Dy\ll P^{n-6-\delta}$ for some $\delta>0$ if $\log_{P}(\Dy)\leq n-6-\delta$ (provided $P$ is chosen large enough) and so it is sensible to consider bounding $B_P(\phi, \tau, \un{\phi}):=\log_P(\Dy)$. By \eqref{eq: minor arcs bound: AVdc-Poisson Dyadic: 2} and upon letting $R:=P^{\phi}$, $R_i:=P^{\phi_i}$, $t:=P^{\tau}$, we have
\begin{align}
B_P(\phi, \tau, \un{\phi}) &\leq \log_P(P^{n-1+\ve}R^{5/2}H^{(2-n)/2} \max\{(HP^2)^{-1},t\}^2\max\{R,\mathcal{X}_{1},\mathcal{X}_{2}\}^{1/2})\nonumber\\
                         &= n-1 + \ve + \frac{5\phi}{2}+ \frac{(2-n)}{2}\cdot \log_P(H) + 2\max\{-2-\log_P(H),\tau\} \,+ \nonumber\\
        \label{eq: minor arcs bound: AVdc-Poisson Dyadic: 3}
                         &\quad\quad \qquad \frac{1}{2}\max\{\phi, \log_P(\mathcal{X}_{1}),\log_P(\mathcal{X}_{2})\} +\log_P(C),
\end{align}
where $C$ is the implied constant in \eqref{eq: minor arcs bound: AVdc-Poisson Dyadic: 2}. If $P$ is made to be sufficiently large, $\log_P(C)$ can be absorbed into $\ve$. Hence (recalling \eqref{eq: minor arcs bound, Poisson H definition} - \eqref{eq: minor arcs bound, Poisson V definition}, \eqref{eq: minor arcs bound: X_1 definition}-\eqref{eq: minor arcs bound: X_2 definition}), if we set 
\begin{align}
\label{eq: minor arcs bound: log H defn (1)}
    \hat{H}(\phi)&:= \max\{\frac{10}{n-2}+\ve', \frac{2}{n+2}+\ve'+\frac{6\phi}{n+2}\},\\
\label{eq: minor arcs bound: log V defn (2)}    
    \hat{V}(\phi,\tau)&:= \max\{0, -1+\phi, \phi+\frac{\tau+\hat{H}(\phi)}{2} \},\\
\label{eq: minor arcs bound: tau_bracket defn (3)}
    \tau\_\text{brac}(\phi,\tau)&:=\max\{-2-\hat{H}(\phi),\,\tau\},\\
\label{eq: minor arcs bound: X_brac defn (4)}
    \mathcal{X}\_\text{brac}(\phi,\tau,\phi_3,\phi_4)&:=\max\{\phi,\, \frac{(1-n)\phi}{2} + n\,\hat{H}(\phi)+ (n-1)\, \hat{V}(\phi,\tau),\\
    &\quad\quad\quad\quad\quad\;\frac{(1-n)\phi}{2}+\Bo\frac{n}{3}-\frac{1}{2}\Bc\phi_3+\frac{(n-1)\phi_4}{2}+ n\,\hat{H}(\phi)\},\nonumber
\end{align}
(for some small $\ve'>0$ that we may choose freely) then \eqref{eq: minor arcs bound: AVdc-Poisson Dyadic: 3} gives us the following:
\Lb
\label{eq: minor arcs bound: Final bound AVDC/Poisson}
Let $n$ be fixed, and 
\begin{align*}
    B_{AV/P}(\phi,\tau,\phi_3,\phi_4):= n-1 + \frac{5\phi}{2}+ \frac{(2-n)}{2} \hat{H}(\phi)+ 2 \tau\_&\mathrm{brac}(\phi,\tau)+\frac{1}{2}\mathcal{X}\_\mathrm{brac}(\phi,\tau,\phi_3,\phi_4).
\end{align*}
Then $B_{AV/P}(\phi,\tau,\phi_3,\phi_4)$ is a continuous, piecewise linear function, and for every $\ve>0$, there is a sufficiently large $P$ such that 
\[B_P(\phi, \tau, \un{\phi}) \leq B_{AV/P}(\phi,\tau,\phi_3,\phi_4) +\ve, \]
for every $\phi\in [0,3/2]$, $\phi_i\in [0,\phi]$, $\phi_1+\phi_2+\phi_3+\phi_4=\phi$, $\tau\in [-5,-\phi-0.75]$.
\Le
The naming convention used is to make it easier to parse the algorithm's input. For example, $\tau\_\mathrm{brac}$ and $\hat{H}$ correspond to \ti{Tau\_bracket} and  \ti{H\_Poisson} respectively in the algorithm's code.
\subsubsection{The Limiting Case}
\label{S: Limiting Case}
 In this subsection, we will briefly illustrate why we should expect the condition $n\geq 39$ to appear in Proposition \ref{P02} (or equivalently, why we should expect $\Dy\ll P^{n-6-\delta}$ to be true for $n\geq 39$). In general, we expect the limiting condition on $n$ to be determined by the so-called ``generic case" for $(R,t,\un{R})$, which is
\[R=Q=P^{3/2}, \;\; \tau= (RQ^{1/2})^{-1}=P^{-9/4}, \;\; R_1=R= P^{3/2}, \;\; R_2=R_3=R_4=1. \]
This is the case where $R$ is as large as possible and is square-free, and $t$ is as large as possible. In this case, we expect the averaged van der Corput/Poisson bound to dominate over the other bounds since it is our main bound. We will therefore pinpoint which component of \eqref{eq: minor arcs bound: AVdc-Poisson Dyadic: 2} dominates and then solve this part by hand. When we do this, we will see that the condition $n\geq 39$ arises naturally.

Firstly, it is easy to check via the definitions of $H$ and $V$ \eqref{eq: minor arcs bound, Poisson H definition} - \eqref{eq: minor arcs bound, Poisson V definition} that when $R\asymp P^{3/2}$, $t\asymp P^{-9/4}$, $R_1=R$, $R_2=R_3=R_4=1$, we have
\begin{align}
    \label{H def, generic R,t}
    H&=\max\{P^{10/(n-2)+\epsilon'},P^{11/(n+2)+\epsilon'}\},\\
    V&=P^{1/2}\max\{1,\: P^{10/(n-2)-1/4+\epsilon'},\: P^{11/(n+2)-1/4+\epsilon'}\}^{1/2}\nonumber\\
    \label{V def, generic R,t}
    &=P^{3/8+\epsilon'/2}\max\{P^{5/(n-2)},\: P^{11/2(n+2)}\}.
\end{align}
We could remove the ``1" term from $V$ since $P^{10/(n-2)-1/4+\epsilon'}>1$ when $n\leq 42$, and when $n\geq 42$ we must have $ P^{11/(n+2)-1/4+\epsilon'}>1$. It makes sense to consider the cases $n\leq 42$ and $n>42$ separately so that we can simplify $H$ and $V$ further. We will just consider $n\leq 42$ here to avoid repetition, as the purpose here is to only illustrate the final limit of the bounds here.\\
In particular, when $n\leq 42$, then by \eqref{H def, generic R,t} and \eqref{V def, generic R,t}, we have
\begin{align}
    \label{H def V def, generic R,t, n<42}
    H=P^{10/(n-2)+\epsilon'}, \quad V=P^{3/8+5/(n-2)+\epsilon'/2}.
\end{align}
We aim to insert these values into the right-hand side of \eqref{eq: minor arcs bound: AVdc-Poisson Dyadic: 2}, but we will firstly perform some simplifications. In particular, we note that
\begin{align}
    \label{tau bracket, generic R,t, n<42}
    \max\{(HP^2)^{-1}, t\} = \max\{P^{-2-10/(n-2)-\epsilon'}, P^{-9/4}\} = P^{-9/4},
\end{align}
since $n\leq 42$. Similarly by \eqref{eq: minor arcs bound: X_1 definition} - \eqref{eq: minor arcs bound: X_2 definition}, we see that $\mathcal{X}_1>\mathcal{X}_2$ since $R_3=R_4=1$ and $V>1$. Hence
\begin{align}
    \label{X bracket, generic R,t, n<42}
    \max\{R, \mathcal{X}_1, \mathcal{X}_2\}&= \max\{R, R^{(1-n)/2}\cdot P^{10n/(n-2)+n\epsilon'}\cdot P^{3(n-1)/8+5(n-1)/(n-2)+(n-1)\epsilon'/2}\}\nonumber\\
    &=\max\{P^{3/2}, P^{3(1-n)/4+10n/(n-2)+3(n-1)/8+5(n-1)/(n-2)+\epsilon'}\nonumber\}\\
    &=\max\{P^{3/2}, P^{3(1-n)/8+(15n-5)/(n-2)+\epsilon'}\nonumber\}\\
    &=P^{3/2},
\end{align}
provided that $n\geq 38.8111\cdots+\ep'$. In other words, as long as $n\geq 39$ and $\ep'$ is chosen small enough, we have $\max\{R, \mathcal{X}_1, \mathcal{X}_2\}=R=P^{3/2}$. Inserting \eqref{H def V def, generic R,t, n<42} - \eqref{X bracket, generic R,t, n<42} into \eqref{eq: minor arcs bound: AVdc-Poisson Dyadic: 2} gives the following:
\begin{align*}
    \Dy&\ll P^{n-1+\ve}R^{5/2}H^{(2-n)/2} \max\{(HP^2)^{-1},t\}^2\max\{R,\mathcal{X}_{1},\mathcal{X}_{2}\}^{1/2}\\
       &= P^{n-1+\ve}\cdot P^{15/4} \cdot P^{[(2-n)/2] \times [10/(n-2) + \ep']} \cdot P^{-9/2} \cdot P^{3/4}\\
       &= P^{n-1+18/4-5-9/2+\ep-(n-2)\ep'/2}\\
       &= P^{n-6-\delta(\ep,\ep')},
\end{align*}
where $\delta>0$ provided that $\ep$ is chosen sufficiently small with respect to $\ep'$ (and $n>2$). 
\subsection{Pointwise van der Corput/Poisson}
Next, we will find a bound for $B_P(\phi, \tau, \un{\phi})$ by combining the improved Pointwise van der Corput differencing process with Poisson summation. This time, we may assume $t\ll |\z|\ll t$. By Propositions \ref{T46} and \ref{T76}, the fact that the $\mathcal{Y}_i$s are a geometric series, and Lemmas \ref{L: Minor Arcs bound: Y bracket geometric series}-\ref{L: Minor Arcs bound: Y bracket bound}, (using the same values for $\mathcal{Y}, V, H$), we have:
\begin{align}
    \label{E918}
    \Dy&\ll \sum_{q,\eqref{E91}} \int_{t\ll |\z|\ll t} H(q)^{-n/2}P^{n/2}q\Bo\sum_{\underline{h}\ll H}|T_{\underline{h}}(q,\underline{z})|\Bc^{1/2}d\underline{z}\nonumber\\
    &\ll P^{n+\ve}\sum_{q,\eqref{E91}} \int_{t\ll |\z|\ll t} H(q)^{-n/2}q^2(1+\mathcal{Y}_{0}(q,b_1,q_3,|\z|))^{1/2}d\underline{z}\nonumber\\
    &\ll P^{n+\ve}\sum_{q,\eqref{E91}} t^2 H(R)^{-n/2}R^2(1+\mathcal{Y}_{0}(R,R_1,R_3,t))^{1/2}\nonumber\\
    &\ll P^{n+\ve} \mathcal{R} t^2 H(R)^{-n/2}R^2(R_1+\mathcal{X}_{1}+\mathcal{X}_{2})^{1/2}\nonumber\\
    &\ll P^{n+\ve} R^{5/2} t^2 H(R)^{-n/2}(R+\mathcal{X}_{1}+\mathcal{X}_{2})^{1/2}.
\end{align}
where the $\mathcal{X}_i$s are defined as in \eqref{eq: minor arcs bound: X_1 definition}-\eqref{eq: minor arcs bound: X_2 definition}. Taking logs and recalling the definitions \eqref{eq: minor arcs bound: log H defn (1)}-\eqref{eq: minor arcs bound: X_brac defn (4)} gives us
\[B_P(\phi, \tau, \un{\phi})\leq n+\ve + \frac{5\phi}{2} + 2\tau -\frac{n}{2}\, H + \frac{1}{2}\, \mathcal{X}\_\mathrm{bracket}+ \log_P(C),\]
where $C$ is the implied constant in \eqref{E918}. Hence, we arrive at the following:
\Lb
\label{T918}
Let $n$ be fixed, $\log_P \Dy:= B_P(\phi, \tau, \un{\phi})$, and 
\[B_{PV/P}(\phi,\tau,\phi_3,\phi_4):=n + \frac{5\phi}{2} + 2\tau -\frac{n}{2}\, H + \frac{1}{2}\, \mathcal{X}\_\mathrm{bracket}.\]
Then $B_{PV/P}(\phi,\tau,\phi_3,\phi_4)$ is a continuous, piecewise linear function, and for every $\ve>0$, there is a sufficiently large $P$ such that 
\[B_P(\phi, \tau, \un{\phi}) \leq B_{PV/P}(\phi,\tau,\phi_3,\phi_4) +\ve, \]
for every $\phi\in [0,3/2]$, $\phi_i\in [0,\phi]$, $\phi_1+\phi_2+\phi_3+\phi_4=\phi$, $\tau\in [-5,-\phi-0.75]$.
\Le

\subsection{Averaged van der Corput/Weyl}
We will now find a bound for $B_P(\phi, \tau, \un{\phi})$ using the Averaged van der Corput differencing process discussed in Section \ref{vdc}, followed by one Weyl differencing step as in Section \ref{Weyl}. By Proposition $\ref{T45}$ (upon choosing $N$ to be sufficiently large), we have

\begin{align}
    \label{E97}
    \Dy\ll_{\ve,N} P^{-N}+ \sum_{q,\eqref{E91}}H^{-n/2+1}&P^{n/2-1+\ve}q((HP^2)^{-1}+t)^2 \Big{(}\max_{t\ll|\uz|\ll t}\sum_{|\h|\ll H}|T_{\underline{h}}(q,\underline{z})|\; \Big{)}^{1/2}.
\end{align}
We may now use Proposition \ref{T814} and \eqref{E91} - \eqref{E92} to bound $T_{\underline{h}}(q,\z)$ as follows:
\begin{align}
    \label{eq: minor arcs bound: VDC/Weyl T_h bound: 1}
    |T_{\underline{h}}(q,\z)|\ll R^2&P^{n+\ve} \Big{(} P^{-2}+H^2R^2t^2+R^2P^{-4}+R^{-1}H^2\min\{1,\frac{1}{HtP^2}\}\Big{)}^{(n-\sigma_{\infty}(\h)-2)/4}.
\end{align}
In this subsection, we will choose 
\begin{equation}
    \label{eq: minor arcs bound: H_Weyl defintion}
    H\asymp \max\{R^{1/6},(RtP^2)^{1/5}\}.
\end{equation}
$H$ is chosen so as to simplify the bounds here, as will be evident from our subsequent results. Note  $H=(RtP^2)^{1/5}$ when $t\geq (HP^2)^{-1}$, and $H=R^{1/6}$ when $t\leq (HP^2)^{-1}$.  This is convenient for us since considering these two cases for $t$ separately is natural due to the \ti{min} bracket in \eqref{eq: minor arcs bound: VDC/Weyl T_h bound: 1}.

Before we substitute \eqref{eq: minor arcs bound: VDC/Weyl T_h bound: 1} back into \eqref{E97}, we will simplify this expression significantly using the following Lemma:

\Lb
\label{L: minor arcs bound: VDC/Weyl bracket simplification}
Let $q\asymp R\leq Q$, $Q=P^{3/2}$, $|\z|\asymp t\leq (qQ^{1/2})^{-1}$, and $|\h|\ll H$, where $H$ is defined as in \eqref{eq: minor arcs bound: H_Weyl defintion}. Finally let $\sigma_{\infty}(\h):=s_{\infty}(F_{\h},G_{\h})$. Then
\[T_{\underline{h}}(q,\z)\ll R^2P^{n+\ve}\Big{(}R^{-1}H^2\min\{1,\frac{1}{HtP^2}\}\Big{)}^{(n-\sigma_{\infty}(\h)-2)/4}.\]
\Le

\begin{proof}
Firstly we will assume that $t>(HP^2)^{-1}$. In this case the right-most term simplifies to $H/(RtP^2)$. Before we get into the proof that $H/(RtP^2)$ dominates all other terms, we will show the for our choice of $H$ (see \eqref{eq: minor arcs bound: H_Weyl defintion}), the following is true:
\begin{align}
    \label{eq: minor arcs bound: H<<P^1/4}
    H\ll P^{1/4}.
\end{align}
Indeed, 
\begin{align*}
    H\asymp (RtP^2)^{1/5}\ll Q^{-1/10}P^{2/5}\asymp P^{2/5-3/20}=P^{1/4}.
\end{align*}
This will be useful to us as we attempt to show that $H/(RtP^2)$ dominates all other terms for every value of $t$ and $R$. We now turn to proving this. Going from left to right in the bracket of \eqref{eq: minor arcs bound: VDC/Weyl T_h bound: 1}, we firstly see that 
\[P^{-2}\ll \frac{H}{RtP^2} \quad \Leftrightarrow \quad H\gg Rt.\]
But, we know that $t\leq (RQ^{1/2})^{-1}$, and so $Rt\ll 1$. We certainly have that $H\gg 1$, and so $H\gg Rt$ must be true. Next,
\[H^2R^2t^2\ll \frac{H}{RtP^2} \quad \Leftrightarrow \quad HR^3t^3P^2\ll 1.\]
Using the fact that $H\ll P^{1/4}$ by \eqref{eq: minor arcs bound: H<<P^1/4}, and $Q\asymp P^{3/2}$ and $Rt\ll Q^{-1/2}$ by the assumptions in the Lemma, we see that 
\[HR^3t^3P^2\ll P^{1/4}Q^{-3/2}P^2\asymp P^{9/4}(P^{-3/2})^{3/2}=1,\]
as required. Finally,
\[R^2P^{-4} \ll \frac{H}{RtP^2} \quad \Leftrightarrow \quad H\gg R^3 tP^{-2}.\]
This one has a few more steps. Recall that we are trying to show the dominance of the right term for every $\ut$ and $R$. By our choice of $H$ and the fact that $t\ll (RQ^{1/2})^{-1}$, $R\leq Q$, we have
\begin{align*}
    &R^3 tP^{-2}\ll H=(RtP^2)^{1/5} \;\forall \; t, R \quad
    \Leftrightarrow \quad R^{14/5} t^{4/5}P^{-12/5}\ll 1 \;\forall \; t, R\\
    \Leftrightarrow \quad &\max\{R\}^{7} \max\{t\}^{2}P^{-6}\ll 1,\quad
    \Leftrightarrow \quad Q^{7}(RQ^{-1/2})^{-2}P^{-6}\ll 1\\
    \Leftrightarrow \quad &Q^{4}P^{-6}\ll 1,
    \Leftrightarrow \quad Q\ll P^{3/2},
\end{align*}
which is true. Hence, for our choices of $H$ and $Q$, we have shown that\\
$H^2R^{-1}\min\{1, (HtP^2)^{-1})\}=H/(RtP^2)$ dominates over all other terms in the expression for every $R\leq Q\asymp P^{3/2}$ and $(HP^2)^{-1}\leq t \leq (RQ^{1/2})^{-1}$.

A similar set of arguments can be used in the case that $t<(HP^2)^{-1}$. In this case, we have $H=R^{1/6}$, and 
\[H^2R^{-1}\min\{1, (HtP^2)^{-1})\}=H^2R^{-1}=R^{-2/3}.\]
Again going from left to right in the bracket of \eqref{eq: minor arcs bound: VDC/Weyl T_h bound: 1}:
\[P^{-2}\ll H^2R^{-1}=R^{-2/3} \quad \Leftrightarrow \quad P^{2}\gg R^{2/3}  \quad \Leftrightarrow \quad R\ll P^3,\]
which is true since $R\leq Q\asymp P^{3/2}$. Next,
\[H^2R^2t^2\ll H^2R^{-1} \quad \Leftrightarrow \quad R(Rt)^2\ll 1 \quad \Leftrightarrow \quad RQ^{-1}\ll 1 \quad \Leftrightarrow \quad R\ll Q,\]
which is again true by our assumptions from the Lemma. We used the fact that $Rt\leq Q^{-1/2}$ since $t\leq (RQ^{1/2})^{-1}$. Finally
\[R^2P^{-4} \ll H^2R^{-1}=R^{-2/3} \quad \Leftrightarrow \quad R^{8/3} \ll P^4 \quad \Leftrightarrow \quad R \ll P^{3/2}.\]
This is also true since $R\leq Q\asymp P^{3/2}$. Hence, we have shown that\\
$H^2R^{-1}\min\{1, (HtP^2)^{-1})\}=H^2R^{-1}$ dominates over all other terms in the expression for every $R\leq Q\asymp P^{3/2}$ and $t \leq (HP^2)^{-1}$. This completes the proof of the lemma.
\end{proof}
We could now substitute the results from Lemma \ref{L: minor arcs bound: VDC/Weyl bracket simplification} into \eqref{E97} directly, but the expression is rather complicated so we will instead just focus on the $\h$ sum inside of the integral for now. Our treatment of it will be analogous to the proof of the $\h$ sum bound in Section \ref{hsum}, but it will be a much simpler process this time around. The reason for our choice of $H$ will also become apparent as we deal with this sum. We aim to show the following:
\Lb
\label{L: minor arcs bound: VDC/Weyl H sum bound}
Let $q\asymp R\leq Q$, $Q=P^{3/2}$, $|\z|\asymp t\leq (qQ^{1/2})^{-1}$, and $|\h|\ll H$, where $H$ is defined as in \eqref{eq: minor arcs bound: H_Weyl defintion}. Then
\[\sum_{|\h|\ll H} |T_{\underline{h}}(q,\z)|\ll_n R^2P^{n+\ve}H.\]
In particular, we save a factor of $H^n$ over the trivial bound.
\Le
\begin{proof}
We will again consider the cases when $t\geq (HP^2)^{-1}$ and $t\leq (HP^2)^{-1}$ separately. Starting with $t\geq (HP^2)^{-1}$ first: By Lemma \ref{L: minor arcs bound: VDC/Weyl bracket simplification}, we have
\begin{align}
    \sum_{|\h|\ll H} |T_{\underline{h}}(q,\z)|&\ll R^2P^{n+\ve}\sum_{i=-1}^{n-1}\sum_{\substack{|\h|\ll H \\ \sigma_{\infty}(\h)=i}} \Bo \frac{H}{RtP^2}\Bc^{(n-i-2)/4}\nonumber\\
                              &\ll R^2P^{n+\ve}\max_{-1\leq i\leq n-1} \#\{|\h|\ll H \:|\: \sigma_{\infty}(\h)=i\} \Bo \frac{H}{RtP^2}\Bc^{(n-i-2)/4}\nonumber\\
                        \label{eq: m. arcs bound : H sum bound, t>(HP^2)^-1: 1}
                              &\ll R^2P^{n+\ve}\max_{-1\leq i\leq n-1} H^{n-i-1} \Bo \frac{H}{RtP^2}\Bc^{(n-i-2)/4},
\end{align}
by Lemma \ref{T71}. Recall that when $t\geq (HP^2)^{-1}$, we have $H\asymp (R|t|P^2)^{1/5}$. This value for $H$ has been chosen specifically so that $H=(H/(RtP^2))^{-1/4}$ when $t>(HP^2)^{-1}$. The reason for doing this is so that the product within the \ti{max} bracket in \eqref{eq: m. arcs bound : H sum bound, t>(HP^2)^-1: 1} will become $H$. Indeed, substituting this value for $H$ into \eqref{eq: m. arcs bound : H sum bound, t>(HP^2)^-1: 1} gives
\begin{align*}
    \sum_{|\h|\ll H} |T_{\underline{h}}(q,\z)|&\ll R^2P^{n+\ve}\max_{-1\leq i\leq n-1} (RtP^2)^{(n-i-1)/5} \Bo \frac{1}{(RtP^2)^{4/5}}\Bc^{(n-i-2)/4}\\
                              &= R^2P^{n+\ve}\max_{-1\leq i\leq n-1} (RtP^2)^{(n-i-1)/5} (RtP^2)^{-(n-i-2)/5}\\
                              &=R^2P^{n+\ve}(RtP^2)^{1/5}\\
                              &=R^2P^{n+\ve}H.
\end{align*}
In theory, it would be nice if we could choose $H$ to be even larger, so that we get something smaller than $R^2P^{n+\ve}H$. However, if one chooses $H$ to be larger than this value, then Lemma \ref{L: minor arcs bound: VDC/Weyl bracket simplification} becomes false (in particular, the term $H^2R^2t^2$ dominates when $H>P^{1/4}$). This is therefore the optimal choice for $H$ when $t>(HP^2)^{-1}$. 

The argument in the case the $t\leq (HP^2)^{-1}$ is almost identical. Recall that when $t\leq (HP^2)^{-1}$, we have $H\asymp R^{1/6}$. By Lemma \ref{L: minor arcs bound: VDC/Weyl bracket simplification}, we have
\begin{align*}
    \sum_{|\h|\ll H} |T_{\underline{h}}(q,\z)|&\ll R^2P^{n+\ve}\sum_{i=-1}^{n-1}\sum_{\substack{|\h|\ll H \\ \sigma_{\infty}(\h)=i}} \big{(} H^2R^{-1}\big{)}^{(n-i-2)/4}\nonumber\\
                              &\ll R^2P^{n+\ve}\max_{-1\leq i\leq n-1} \#\{|\h|\ll H \:|\: \sigma_{\infty}(\h)=i\} \, R^{-(n-i-2)/6}\nonumber\\
                              &\ll R^2P^{n+\ve}\max_{-1\leq i\leq n-1} H^{n-i-1}R^{-(n-i-2)/6}\nonumber\\
                              &\ll_n R^2P^{n+\ve}R^{1/6}\ll R^2P^{n+\ve}H,
\end{align*}
by Lemma \ref{T71}, and by the fact that when $t\leq (HP^2)^{-1}$, we have $H\asymp R^{1/6}$. This value for $H$ has again been chosen specifically so that $H^{n-i-1}R^{-(n-i-1)/6}=1$ for every $i$. when $t>(HP^2)^{-1}$. For the same reasons as before, we cannot choose $H$ to be larger than this without causing other issues, and so this makes our choice of $H$ in \eqref{eq: minor arcs bound: H_Weyl defintion} optimal for our situation.
\end{proof}
Substituting the result of Lemma \ref{L: minor arcs bound: VDC/Weyl H sum bound} back into \eqref{E97} gives
\begin{align*}
    \Dy&\ll P^{n-1+\ve}\sum_{q,\eqref{E91}}H^{-n/2+3/2}R^2((HP^2)^{-1}+t)^2.
\end{align*}
Finally, we split the $R$ sum into its cube-free and cube-full components, and use Lemma \ref{T95} as follows:

\begin{align}
    \Dy&\ll P^{n-1+\ve} \sum_{b_1=R_1}^{2R_1}\sum_{b_2=R_2}^{2R_2}\sum_{b_3=R_3}^{2R_3}\sum_{q_4=R_4}^{2R_4} R^2 H(R, t)^{(3-n)/2}((H(R,t)P^{2})^{-1}+ t)^2\nonumber\\
    \label{eq: minor arcs bound: Dy Avdc/Weyl final bound}
       &\ll P^{n-1+\ve}R^3 R_2^{-1/2}R_3^{-2/3}R_4^{-3/4}H(R, t)^{(3-n)/2}((H(R,t)P^{2})^{-1}+ t)^2\nonumber\\
       &\ll P^{n-1+\ve}R^3 R_3^{-2/3}R_4^{-3/4}H(R, t)^{(3-n)/2}((H(R,t)P^{2})^{-1}+ t)^2.
\end{align}
Therefore, upon setting $R:=P^{\phi}$, $R_i:=P^{\phi_i}$, $t:=P^{\tau}$ and (recall \eqref{eq: minor arcs bound: H_Weyl defintion})
\begin{align}
    \label{eq: minor arcs bound: VDC/Weyl H defn}
    \hat{H}\_\mathrm{Weyl}(\phi,\tau)&:=\max\Co\frac{\phi}{6}, \frac{2+\phi+\tau}{5}\Cc,\\
    \label{eq: minor arcs bound: tau_brac defn VDC}
    \tau\_\mathrm{brac}(\phi,\tau)&;= \max\{-2-\hat{H}\_\mathrm{Weyl}(\phi,\tau),\, \tau\},
\end{align} 
we have:
\begin{align*}
    B_P(\phi, \tau, \un{\phi}) \leq n-1+\ve + 3\phi -\frac{2\phi_3}{3} -&\frac{3\phi_4}{4}+ \log_P(C)+\frac{(3-n)}{2} \hat{H}\_\mathrm{Weyl}(\phi,\tau) + 2\tau\_ \mathrm{brac}(\phi,\tau),
\end{align*}
where C is the implied constant in \eqref{eq: minor arcs bound: Dy Avdc/Weyl final bound}. Hence, if $P$ is chosen to be sufficiently large, we may absorb $\log_P(C)$ into $\ve$, giving us the following:
\Lb
\label{T1101}
Let $n$ be fixed, and 
\[B_{AV/W}(\phi,\tau,\phi_3,\phi_4):=n-1+ 3\phi -\frac{2\phi_3}{3} -\frac{3\phi_4}{4}+\frac{(3-n)}{2} \hat{H}\_\mathrm{Weyl}(\phi,\tau) + 2\tau\_ \mathrm{brac}(\phi,\tau).\]
Then $B_{AV/W}(\phi,\tau,\phi_3,\phi_4)$ is a continuous, piecewise linear function, and for every $\ve>0$, there is a sufficiently large $P$ such that 
\[B_P(\phi, \tau, \un{\phi}) \leq B_{AV/W}(\phi,\tau,\phi_3,\phi_4) +\ve, \]
for every $\phi\in [0,3/2]$, $\phi_i\in [0,\phi]$, $\phi_1+\phi_2+\phi_3+\phi_4=\phi$, $\tau\in [-5,-\phi-0.75]$.
\Le
\subsubsection{Explaining the Choice of Q}
\label{S: Explaining Q}
As an aside, we will briefly explain our choice of $Q\asymp P^{3/2}$, as promised in Section \ref{C: Initial setup}. We see in the proof of Lemma \ref{L: minor arcs bound: VDC/Weyl bracket simplification}, that the optimal choice for $Q$ is $P^{3/2}$. In particular, if we choose any other value for $Q$, then we cannot simplify the Weyl bound to such a large extent. We normally optimise our choice for $Q$ based on our main bound, which in this case is the averaged van der Corput/Poisson bound. This value for $Q$ turns out to be 
\[Q\asymp P^{4(n+3)/3(n-2)},\]
which is the choice of $Q$ that guarantees $HP^2|\z|\ll 1$ for every $\z$ (optimising our $V$ term), where $H$ and $V$ are defined as in \eqref{eq: minor arcs bound, Poisson H definition} - \eqref{eq: minor arcs bound, Poisson V definition}. In the range of $n$ that we are considering, this value is largest when $n=39$, giving us $Q\asymp P^{1.5135\cdots}$, which is very close to the optimal choice for the van der Corput/Weyl bounds. In the end, the authors chose $Q\asymp P^{3/2}$ because it is simpler and it makes the van der Corput/Weyl bounds significantly easier to work with. Most importantly, this choice does not cause any issues for our Poisson bounds, since it is ``almost" optimal. 
\subsection{Pointwise van der Corput/Weyl}
In this subsection, we will find a bound for $B_P(\phi, \tau, \un{\phi})$ by using Pointwise van der Corput differencing, followed by one Weyl step. We start by applying Lemma \ref{T46} to $\Dy$:
\[\Dy\ll \sum_{q,\eqref{E91}} \int_{t\ll |\z|\ll t} H^{-n/2}P^{n/2}q\Bo\sum_{\underline{h}\ll H}|T_{\underline{h}}(q,\underline{z})|\Bc^{1/2}d\uz. \]
Upon setting $H:=\max\{q^{1/6}, (qtP^2)^{1/5}\}$ again, we may use Lemma \ref{L: minor arcs bound: VDC/Weyl H sum bound} and Proposition \ref{T814}) to conclude that
\begin{align}
    \label{E1103}
    \Dy&\ll P^{n+\ve}\sum_{q,\eqref{E91}}\int_{t\ll |\z|\ll t} H(q,t)^{(1-n)/2}q^2 d\uz \nonumber \\
       &\ll P^{n+\ve}R^3 R_3^{-2/3}R_4^{-3/4} t^2 H(R,t)^{(1-n)/2}.
\end{align}
Hence upon recalling \eqref{eq: minor arcs bound: VDC/Weyl H defn}, we have
\begin{align*}
    B_P(\phi, \tau, \un{\phi}) \leq n+\ve + 3\phi -\frac{2\phi_3}{3} -&\frac{3\phi_4}{4}+ 2\tau + \log_P(C) +\frac{1-n}{2} \hat{H}\_\mathrm{Weyl}(\phi,\tau),
\end{align*}
where C is the implied constant in \eqref{E1103}. Therefore, if $P$ is chosen to be sufficiently large, we may absorb $\log_P(C)$ into $\ve$, giving us the following:
\Lb
\label{T1104}
Let $n$ be fixed, and 
\[B_{PV/W}(\phi,\tau,\phi_3,\phi_4):=n+ 3\phi + 2\tau -\frac{2\phi_3}{3} -\frac{3\phi_4}{4}+\frac{1-n}{2} \hat{H}\_\mathrm{Weyl}(\phi,\tau).\]
Then $B_{PV/W}(\phi,\tau,\phi_3,\phi_4)$ is a continuous, piecewise linear function, and for every $\ve>0$, there is a sufficiently large $P$ such that 
\[B_P(\phi, \tau, \un{\phi}) \leq B_{PV/W}(\phi,\tau,\phi_3,\phi_4) +\ve, \]
for every $\phi\in [0,3/2]$, $\phi_i\in [0,\phi]$, $\phi_1+\phi_2+\phi_3+\phi_4=\phi$, $\tau\in [-5,-\phi-0.75]$.
\Le

\subsection{Weyl}
In this subsection, we will find a bound for $B_P(\phi, \tau, \un{\phi})$ by using Weyl differencing twice. We start by applying Proposition \ref{T815} to $\Dy$:
\begin{align*}
    \Dy\ll P^{n+\ve}\sum_{q,\eqref{E91}} &\starsum_{\ua} \int_{t\ll |\z|\ll t}\Big{(} P^{-4}+q^2|\underline{z}|^2+q^2P^{-6}+q^{-1}\min\{1,\frac{1}{|\underline{z}|P^3}\}\Big{)}^{(n-1)/16}d\underline{z}.
\end{align*}
Firstly, it is easy to use \eqref{E91}-\eqref{E93} to check that 
\[\max\{P^{-4}, q^2P^{-6}\}\leq q^{-1}\min\{1, (|\z|P^3)\}^{-1}.\]
Hence 
\begin{align}
\label{E1106}
    \Dy&\ll P^{n+\ve}\sum_{q,\eqref{E91}} \starsum_{\ua}  \int_{t\ll |\z|\ll t} \Big{(} q^2|\underline{z}|^2+q^{-1}\min\{1,\frac{1}{|\underline{z}|P^3}\}\Big{)}^{(n-1)/16}d\underline{z}\nonumber\\
       &\ll P^{n+\ve}\sum_{q,\eqref{E91}} q^2 t^2 \Big{(} q^2t^2+q^{-1}\min\{1,\frac{1}{tP^3}\}\Big{)}^{(n-1)/16}\nonumber\\
       &\ll P^{n+\ve}\sum_{q,\eqref{E91}} q^2 t^2 \Big{(} q^2t^2+q^{-1}\min\{1,\frac{1}{tP^3}\}\Big{)}^{(n-1)/16}\nonumber\\
       &\ll P^{n+\ve}R^3 R_3^{-2/3} t^2 \Big{(} R^2t^2+R^{-1}\min\{1,\frac{1}{tP^3}\}\Big{)}^{(n-1)/16}.
\end{align}
As usual, we are interested in $\log_P(\Dy)$ since this will be piecewise linear. The bound above gives
\begin{align*}
    B_P(\phi, \tau, \un{\phi})\leq n+\ve +3\phi + 2\tau-\frac{2\phi_3}{3} -\frac{3\phi_4}{4}  &+  \log_P(C)\\
    &+\frac{n-1}{16}\,\max\Co 2\phi+2\tau, \, -\phi+\min \{0,-3-\tau\}\Cc,
\end{align*}
where $\log_P(C)$ is the implied constant in \eqref{E1106}. Therefore, upon setting
\begin{align}
\label{eq: minor arcs bound: Weyl_brac}
    \mathrm{Weyl}\_\mathrm{brac}(\phi,\tau):=\max\Co 2\phi+2\tau, \, -\phi+\min \{0,-3-\tau\}\Cc,
\end{align}we arrive at the following bound for $B_P$:
\Lb
\label{T1107}
Let $n$ be fixed, $\log_P \Dy:= B_P(\phi, \tau, \un{\phi})$, and 
\[B_{Weyl}(\phi,\tau,\phi_3,\phi_4):=n +3\phi + 2\tau -\frac{2\phi_3}{3} - \frac{3\phi_4}{4} + \frac{n-1}{16}\,\mathrm{Weyl}\_\mathrm{brac}(\phi,\tau).\]
Then $B_{Weyl}(\phi,\tau,\phi_3,\phi_4)$ is a continuous, piecewise linear function, and for every $\ve>0$, there is a sufficiently large $P$ such that 
\[B_P(\phi, \tau, \un{\phi}) \leq B_{Weyl}(\phi,\tau,\phi_3,\phi_4) +\ve, \]
for every $\phi\in [0,3/2]$, $\phi_i\in [0,\phi]$, $\phi_1+\phi_2+\phi_3+\phi_4=\phi$, $\tau\in [-5,-\phi-0.75]$.
\Le

\subsection{Proof of Proposition \ref{P02}}
Recall that our ultimate goal is to show that 
\[S_{\mathfrak{m}}\ll P^{n-6-\delta},\]
for some $\delta>0$, for every $n\geq 39$.  This is equivalent to having
\[\log_P(S_{\mathfrak{m}})< n-6.\]
We assume that $\rho$ is chosen sufficiently small to facilitate average van der Corput differencing bounds. We may now use all of the previous subsections to bound $\log_P(S_{\mathfrak{m}})$ by a continuous, piecewise linear function in three variables: By \eqref{E94}, we have
\[\log_P(S_{\mathfrak{m}})\leq \log_P(c_1) + \ve + \max_{\substack{\phi,\un{\phi},\tau \\ \eqref{E92}, \eqref{E93}, \, \tau> P^{-5}}} \{ B_P(\phi, \tau, \un{\phi}) , n-7\},\]
where $c_1$ is the implied constant. We clearly have that $\log_P(c_1) + \ve +n-7\leq n-6-\ve$ for sufficiently large $P$, so we will assume that this is the case. Hence by Lemmas \ref{T1101}-\ref{T918}, we have
\begin{align}
    \label{eq: minor arcs bound: final function}
    \log_P(S_{\mathfrak{m}})\leq \ve+\max\Co \min_{(\phi,\tau,\phi_3,\phi_4) \in D_1\cup D_2} \Co &B_{AV/P}(\phi,\tau,\phi_3,\phi_4), B_{PV/P}(\phi,\tau,\phi_3,\phi_4),\nonumber\\
    &B_{AV/W}(\phi,\tau,\phi_3,\phi_4),\, B_{PV/W}(\phi,\tau,\phi_3,\phi_4), \nonumber\\
    &B_{Weyl}(\phi,\tau,\phi_3,\phi_4)\Cc , n-6-2\ve\Cc,
\end{align}
where
\begin{align*}
    D_1&:=\{(\phi,\tau,\phi_3,\phi_4)\in \R^3 \: : \: \Delta\leq \phi \leq 3/2, \:\: 0\leq \phi_3 \leq \phi, \:\: -5 \leq \tau \leq -\phi-3/4 \}\\
    D_2&:=\{(\phi,\tau,\phi_3,\phi_4)\in \R^3 \: : \: 0\leq \phi \leq \Delta, \:\: 0\leq \phi_3 \leq \phi, \:\: -3+\Delta \leq \tau \leq -\phi-3/4 \}.
\end{align*}
Since $D_1$ and $D_2$ are convex polytopes and the function which we have bounded $\log_P(S_{\mathfrak{m}})$ is continuous and piecewise linear for every $n\in \N$. Each region on which this function is linear is a convex  polytope. It is well known that extremum value of such a function must be taken at a vertex of one of these polytopes. Therefore, one may numerically compute the exact maxima in \eqref{eq: minor arcs bound: final function}. We compute this maxima two different ways and check that both values coincide:

 The first way is to use an inbuilt Min-Max function in Mathematica that compares the two bounds. This algorithm can be found in Appendix A. An executable version of code can also be found in the first author's Github page \cite{Northey1}.
 
 We have also verified this using an open source python based algorithm (this can be found in \cite{Northey1}). 
 
After taking $\ve'=0.0001$ (see \eqref{eq: minor arcs bound: log H defn (1)}), $\Delta=1/7-0.001$, both numerical verifications proves that
\[\log_P(S_{\mathfrak{m}})\leq n-6.00185\]
for every $(\phi,\tau,\phi_3,\phi_4)\in D_1\cup D_2$, provided that $39\leq n \leq 48$. The limiting case is when $n=39$, $\phi=3/2$, $\tau=-2.25$, $\phi_2=\phi_3=\phi_4=0$. When $n\geq 49$, we may instead refer to Birch \cite{Birch61}. 

\section{Major Arcs}
\label{MA}
Finally, we will complete the proof of Theorems \ref{thm:main thm}-\ref{thm:main thm 1} by showing that
\[S_{\mathfrak{M}}=C_X P^{n-6}+O(P^{n-6-\delta}),\]
where
\[S_{\mathfrak{M}}=\sum_{q\leq P^{\Delta}}\starsum_{\ua}^q\int_{|\underline{z}|<P^{-3+\Delta}} S(\underline{a}/q+\underline{z}) d\underline{z},\]
and $C_X$ is a product of local densities. Let 
\[\mathfrak{S}(R):=\sum_{q=1}^R q^{-n}\starsum_{\ua}^q S_{\underline{a},q},\quad \mathfrak{J}(R):=\int_{|\underline{z}|<R}\int_{\mathbb{R}^n}\omega(\underline{x})e(z_1F(\underline{x})+z_2G(\underline{x}))\: d\underline{x}d\underline{z},\]
where
\[S_{\underline{a},q}:= \sum_{\x  \bmod{q}} e_q(a_1F(\x)+a_2G(\x)),\]
and
\[\mathfrak{S}:=\lim_{R\rightarrow\infty} \mathfrak{S}(R),\quad \mathfrak{J}=\lim_{R\rightarrow \infty} \mathfrak{J}(R),\]
if the limits exist. In the following let $\sigma$ denote the dimension of the singular locus of the complete intersection $X$. For our application here we only need to establish the $ \sigma=-1$ case. However, a general version is equally straight-forward.
We will start by showing the following:
\begin{lemma}
\label{T121}
Assume that $n-\sigma\geq 34$ and that $\mathfrak{S}$ is absolutely convergent, satisfying
\[\mathfrak{S}(R)=\mathfrak{S}+O_{\phi}(R^{-\phi}).\]
Then provided that we have $\Delta\in (0,1/7)$, 
\[S_{\mathfrak{M}}=\mathfrak{S}\mathfrak{J}P^{n-6}+O_{\phi}(P^{n-6-\delta}).\]
\end{lemma}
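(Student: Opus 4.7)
The plan is to apply Poisson summation to isolate the zero frequency contribution, match it with the product $\mathfrak{S}\mathfrak{J}P^{n-6}$, and show that the non-zero frequencies only contribute an acceptable error. For each pair $(q,\ua)$ with $q\leq P^\Delta$ and $(\ua,q)=1$, and each $\uz$ with $|\uz|\leq P^{-3+\Delta}$, I would write $\x=\uu+q\uv$ with $\uu$ ranging over residues modulo $q$ and $\uv\in\Z^n$, and apply Poisson summation to the resulting $\uv$-sum exactly as in the proof of Lemma \ref{T61} (the cubic nature of $F,G$ does not affect the derivation). Summing over $\ua$ modulo $q$ this yields
\[
\starsum_{\ua\bmod{q}}S(\ua/q+\uz)=q^{-n}\sum_{\m\in\Z^n}S(q;\m)\,\mathcal{I}(\uz;q^{-1}\m),
\]
where $S(q;\m)$ is the complete exponential sum \eqref{eq:Sexpsumdef} and $\mathcal{I}(\uz;\k)=\int_{\R^n}\omega(\x/P)e(z_1F(\x)+z_2G(\x)-\k\cdot\x)d\x$ is the exponential integral \eqref{eq:Iexpintdef} (both now built from the cubics $F,G$).

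Extracting $\m=\vecnull$ yields the candidate main term
\[
M_P=\sum_{q\leq P^\Delta}q^{-n}\Bo\starsum_{\ua\bmod q}S_{\ua,q}\Bc\int_{|\uz|\leq P^{-3+\Delta}}\mathcal{I}(\uz;\vecnull)\,d\uz.
\]
Changing variables $\x=P\y,\;\uz=P^{-3}\uw$ gives $\mathcal{I}(\uz;\vecnull)=P^n\int\omega(\y)e(w_1F(\y)+w_2G(\y))d\y$ and $d\uz=P^{-6}d\uw$, so the inner integral equals $P^{n-6}\mathfrak{J}(P^\Delta)$ and therefore $M_P=P^{n-6}\mathfrak{S}(P^\Delta)\mathfrak{J}(P^\Delta)$. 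Using the assumed convergence $\mathfrak{S}(R)=\mathfrak{S}+O(R^{-\phi})$, together with the standard decay estimate for $\mathfrak{J}(R)-\mathfrak{J}$ obtained by integration by parts in $\uw$ (which is where the non-singular condition on $(F,G)$ at $\x_0$ in the support of $\omega$ gets used, exactly as in the classical Birch setting), one gets $\mathfrak{J}(P^\Delta)=\mathfrak{J}+O(P^{-\phi'\Delta})$ for some $\phi'>0$. Picking $\delta<\Delta\min\{\phi,\phi'\}$ then gives $M_P=\mathfrak{S}\mathfrak{J}P^{n-6}+O(P^{n-6-\delta})$ as desired.

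The main obstacle is showing that the tail $\m\neq\vecnull$ is $O(P^{n-6-\delta})$. Writing
\[
E_P=\sum_{q\leq P^\Delta}q^{-n}\sum_{\m\neq\vecnull}|S(q;\m)|\int_{|\uz|\leq P^{-3+\Delta}}|\mathcal{I}(\uz;q^{-1}\m)|d\uz,
\]
I would bound $|S(q;\m)|$ using a Birch-type Weyl differencing estimate for cubic exponential sums, of shape $|S(q;\m)|\ll q^{n+1-(n-\sigma)/8+\ve}$ after multiplicativity and splitting $q$ into its squarefree, square-full and cube-full parts as in Section \ref{Sec: Exponential Sums: Initial Bounds}; this is where the hypothesis $n-\sigma\geq 34$ is used to ensure the savings beat the loss from the $\m$-summation. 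For the integral, an analogue of Lemma \ref{E620} applied to cubic $F,G$ gives $\mathcal{I}(\uz;q^{-1}\m)\ll_N P^{-N}|\m|^{-N}$ as soon as $|\m|/q$ exceeds the natural threshold $P^{-1}\max\{1,|\uz|P^3\}^{1/2}\ll P^{-1+\Delta/2}$, so the sum over $\m$ can be truncated to $|\m|\ll qP^{-1+\Delta/2}$ up to a negligible error, while for $\m$ in this range the measure bound of Lemma \ref{E620} yields $\mathcal{I}(\uz;q^{-1}\m)\ll P^n(qP^{-1+\Delta/2})^{-n}$ after using stationary phase. Combining these with $q\leq P^\Delta$, summing over $\m$ in the truncated box, integrating over $|\uz|\leq P^{-3+\Delta}$, and using $n-\sigma\geq 34$ together with $\Delta<1/7$ suffices to force $E_P\ll P^{n-6-\delta}$ for a suitable $\delta>0$, completing the proof.
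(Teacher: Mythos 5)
Your proposal is essentially correct, but it takes a genuinely different route from the paper. The paper never applies Poisson summation on the major arcs: it writes $\x=\uu+q\uv$ and replaces the $\uv$-sum by an integral via a crude first-derivative (Euler--Maclaurin type) comparison, incurring an error $O(P^{n-1}q+|\uz|P^{n+2}q)$ per point of $\mathfrak{M}$; multiplying by the measure $O(P^{-6+5\Delta})$ of the major arcs is exactly what produces the total error $O(P^{n-7+7\Delta})$ and hence the constraint $\Delta<1/7$. It then controls the tail $\mathfrak{J}-\mathfrak{J}(P^{\Delta})$ via Lemma \ref{T1240}, a Weyl-differencing bound $I(\ut)\ll\min\{1,|\ut|^{(\sigma+1-n)/16+\ve}\}$ obtained from Proposition \ref{T815}, and this is precisely where $n-\sigma\geq 34$ enters. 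Your Poisson route buys a much cleaner error term: with $q\leq P^{\Delta}$ and $|\uz|\leq P^{-3+\Delta}$ the non-stationary-phase threshold $qP^{\ve-1}\max\{1,P^{3}|\uz|\}\ll P^{2\Delta-1+\ve}$ is less than $1$, so \emph{every} $\m\neq\vecnull$ lies in the rapidly decaying range and $E_P\ll_N P^{-N}$ — you do not need the Weyl-type bound on $S(q;\m)$, the measure/stationary-phase bound, or the hypothesis $n-\sigma\geq 34$ for this part at all, and your "truncated box" $|\m|\ll qP^{-1+\Delta/2}$ in fact contains no non-zero integer vectors. Likewise your treatment of $\mathfrak{J}-\mathfrak{J}(R)$ by integration by parts is valid here because $\omega$ is supported in a small neighbourhood of a point $\x_0$ with $\rank(\nabla F(\x_0),\nabla G(\x_0))=2$, so $|t_1\nabla F+t_2\nabla G|\gg|\ut|$ on $\supp(\omega)$ and one even gets $I(\ut)\ll_N|\ut|^{-N}$; this is stronger than the paper's bound but leans on the specific choice of $\omega$ (it is not "the classical Birch setting", where the weight is a box and the decay comes from Weyl differencing), whereas the paper's Lemma \ref{T1240} is weight-robust and its exponential-sum counterpart is reused for the singular series. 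In short: your argument is sound and would prove the lemma under weaker hypotheses, but tidy up the error-term paragraph, which currently invokes two estimates that play no role.
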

Following the proof found in \cite{Browning-Heath-Brown09}, the first step towards proving this lemma is to show that
\begin{equation}
    \label{E122}
    S(\underline{\alpha})=q^{-n}P^nS_{\underline{a},q}I(\underline{z}P^3)+O(P^{n-1+2\Delta})
\end{equation}
where 
\[I(\underline{t}):=\int_{\mathbb{R}^n}\omega(\underline{x})e(t_1F(\underline{x})+t_2G(\underline{x})) d\underline{x},\]
for $\underline{t}\in\mathbb{R}^2$. In order to achieve this, we need to be able to separate $S(\underline{\alpha})$'s dependence on $\underline{a}$ from its dependence on $\underline{z}$. Write $\underline{x}=\underline{u}+q\underline{v},$ where $\underline{u}$ runs over the complete set of residues modulo $q$ and recall that $\underline{\alpha}=\underline{a}/q+\underline{z}$. Then
\begin{equation}
    \label{E123}
    S(\underline{\alpha})=\sum_{\underline{u} \bmod{q}} e_q(a_1F(\underline{u})+a_2G(\underline{u}))\sum_{v\in\mathbb{Z}} \Phi_{\underline{u}}(\underline{v}),
\end{equation}
where
\[\Phi_{\underline{u}}(\underline{v})=\omega\Big{(}\frac{\underline{u}+q\underline{v}}{P}\Big{)}e(z_1F(\underline{u}+q\underline{v})+z_2G(\underline{u}+q\underline{v})).\]
In order to have it so that $\underline{a}$ and $\underline{z}$ are independent from each other, we will replace our $\underline{v}$ sum with a crude integral estimate which has no dependence on $\underline{u}$. In particular, we can use the fact that $\Phi_{\underline{u}}(\underline{v}+\underline{x})=\Phi_{\underline{u}}(\underline{v})+O(\max_{y\in[0,1]^n}|\nabla \Phi_{\underline{u}}(\underline{v}+\underline{y})|)$ for any $\underline{x}\in[0,1]^n$, to conclude the following:
\begin{align*}
\Big{|}\int_{\mathbb{R}^n}\Phi_{\underline{u}}(\underline{v}) d\underline{v}-\sum_{\underline{v}\in\mathbb{Z}^n}\Phi_{\underline{u}}(\underline{v})\Big{|}&\leq \meas(\mathcal{S}) \max_{\underline{\hat{v}}\in \mathcal{S}\cap \mathbb{Z}}\Big{|}\int_{ \underline{\hat{v}}+[0,1]^n}\Phi_{\underline{u}}(\underline{v}) d\underline{v}-\Phi_{\underline{u}}(\underline{\hat{v}})\Big{|}\\
&\ll \meas(\mathcal{S}) \max_{\underline{\hat{v}}\in\mathcal{S}\cap \mathbb{Z}} \max_{y\in[0,1]^n}|\nabla \Phi_{\underline{u}}(\underline{\hat{v}}+\underline{y})|.\\
\end{align*}
We note that 
\[\max_{\underline{\hat{v}}\in\mathcal{S}\cap \mathbb{Z}} \max_{y\in[0,1]^n}|\nabla \Phi_{\underline{u}}(\underline{\hat{v}}+\underline{y})|=\max_{\underline{\hat{v}}\in \mathcal{S}}|\nabla \Phi_{\underline{u}}(\underline{\hat{v}})|,\]
and so by the Liebniz rule we have
\begin{align*}
\Big{|}\int_{\mathbb{R}^n}\Phi_{\underline{u}}(\underline{v}) d\underline{v}-\sum_{\underline{v}\in\mathbb{Z}^n}\Phi_{\underline{u}}(\underline{v})\Big{|}&\ll P^n q^{-n}(q/P+q|\underline{z}|P^2)\\
&= P^{n-1}q^{1-n}+|\underline{z}|P^{n+2}q^{1-n},
\end{align*}
since $\mathcal{S}$ is an n-dimensional cube with sides of order $1+P/q\leq 2P/q.$ Hence, on setting\\
$P\underline{x}=\underline{u}+q\underline{v},$ we arrive at the following expression for $\sum_{\underline{v}} \Phi_{\underline{u}}(\underline{v})$:
\[\sum_{\underline{v}\in\mathbb{Z}^n} \Phi_{\underline{u}}(\underline{v})=\frac{P^n}{q^n}\int_{\mathbb{R}^n}\omega(\underline{x})e(z_1P^3F(\underline{x})+z_2P^3G(\underline{x})) d\underline{x}+O(P^{n-1}q^{1-n}+|\underline{z}|P^{n+2}q^{1-n}).\]
We can therefore conclude that
\begin{equation}
    \label{E63}
    S(\underline{\alpha})=P^n q^{-n} S_{\underline{a},q}I(\underline{z}P^3)+O(P^{n-1}q+|\underline{z}|P^{n+2}q)
\end{equation}

by \eqref{E123}. Since $|\underline{z}|\leq P^{-3+\Delta}$ and $q\leq P^{\Delta}$, we can now conclude that \eqref{E122} is indeed true. Furthermore, by substituting \eqref{E122} into $S_\mathfrak{M}$ and -- for the error term -- noting that the major arcs have measure $O(P^{-6+5\Delta})$ ($P^{-6+2\Delta}$ from the integrals, $P^{3\Delta}$ from the sums), we conclude that
\begin{equation}
    \label{E64}
    S_{\mathfrak{M}}=P^{n-6}\mathfrak{S}(P^{\Delta})\mathfrak{J}(P^{\Delta})+O(P^{n-7+7\Delta}).
\end{equation}
Since we have assumed $\mathfrak{S}(R)=\mathfrak{S}+O_\phi(R^{-\phi})$ for some $\phi>0$, we can replace $\mathfrak{S}(P^{\Delta})$ with $\mathfrak{S}$ leading us to
\begin{equation}
    \label{E65}
    S_{\mathfrak{M}}=P^{n-6}\mathfrak{S}\mathfrak{J}(P^{\Delta})+O_\phi(P^{n-7+7\Delta}+P^{n-6-\Delta\phi}).
\end{equation}
We will prove that this assumption is true in the next section. We now aim to show that we can replace $\mathfrak{J}(P^{\Delta})$ with $\mathfrak{J}$. In order to do this, we need $\mathfrak{J}$ to exist, and $|\mathfrak{J}-\mathfrak{J}(P^{\Delta})|$ to be sufficiently small. Now, it is easy to see that 
\[\mathfrak{J}-\mathfrak{J}(R)=\int_{t\geq R} I(\underline{t}) d\underline{t},\]
and so this motivates us to find a bound for the size of $I(\underline{t}).$ We will show the following:
\begin{lemma}
\label{T1240}
Let 
\[\sigma:=\dim \Sing_{\mathbb{C}}(X_F,X_G).\] Then
\[I(\underline{t})\ll \min\{1,|\underline{t}|^{\sigma+1-n/16+\ve}\}.\]
\end{lemma}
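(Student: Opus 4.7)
The first bound $I(\ut) \ll 1$ is trivial: since $\omega \in C^\infty_c(\R^n)$, we have $|I(\ut)| \leq \|\omega\|_{L^1} \ll 1$. For the decay bound, my plan is to run the oscillatory-integral analogue of the Weyl/Weyl exponential sum argument underlying Proposition \ref{T815}. Set $T = |\ut|$ and $\un{\lambda} = \ut/T$, so the phase of $I(\ut)$ is $T\,H_{\un{\lambda}}(\x)$ with $H_{\un{\lambda}} := \lambda_1 F + \lambda_2 G$ a cubic form.

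The first step is to apply Cauchy--Schwarz to $|I(\ut)|^2$ and substitute $\y = \x + \h_1$ to obtain an integral whose inner phase $H_{\un{\lambda}}(\x+\h_1) - H_{\un{\lambda}}(\x)$ is of degree $\leq 2$ in $\x$. A second application of Cauchy--Schwarz (with shift $\x \mapsto \x+\h_2$) then reduces $|I(\ut)|^4$ to an integral over $\h_1, \h_2$ in a bounded region of innermost integrals of the form $\int W_{\h_1,\h_2}(\x)\,e(T\,L_{\h_1,\h_2,\un{\lambda}}(\x))\,d\x$, whose phase is now \emph{linear} in $\x$, with $W_{\h_1,\h_2} \in C_c^\infty$. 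Repeated integration by parts in $\x$ yields rapid decay of this innermost integral unless $T\,|\nabla_\x L_{\h_1,\h_2,\un{\lambda}}| \ll 1$, in which case we use the trivial $O(1)$ bound.

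The problem therefore reduces to estimating the Lebesgue measure of the set of $(\h_1, \h_2)$ for which this smallness condition holds. Uniformly in $\un{\lambda}$, Lemma \ref{P: background on a pair of quadrics: sigma'<= sigma+1} bounds the singular locus dimension $\sigma(H_{\un{\lambda}}) \leq \sigma'(F,G) \leq \sigma + 1$. Coupling this with a direct adaptation of the dimensional count in Lemma \ref{T71} to the $(\h_1, \h_2)$ variables gives the required measure estimate, yielding (after taking fourth roots) the bound $I(\ut) \ll |\ut|^{\sigma+1-n/16+\ve}$. The principal obstacle will be the final counting step: keeping the dimension bound uniform in the direction $\un{\lambda}$, and handling the exceptional strata where $H_{\un{\lambda}}$ is more singular than generic, by a stratification analogous to the exceptional-line analysis in Proposition \ref{E22}.
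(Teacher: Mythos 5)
Your proposal takes a genuinely different route from the paper, and as written the final counting step is not correctly sourced. The paper's proof is a \emph{transfer argument}, not a direct oscillatory-integral estimate: specialising \eqref{E63} to $q=1$, $\ua=\un{0}$ gives, for every $P\geq 1$,
\[
S(\ualf)=P^nI(\ualf P^3)+O\big((|\ualf|P^3+1)P^{n-1}\big),
\]
so that $I(\ut)=P^{-n}S(\ut/P^3)+O\big((|\ut|+1)P^{-1}\big)$. Since the left side does not depend on $P$, one inserts the Weyl bound of Proposition~\ref{T815} (with $q=1$, $\ua=\un{0}$) for $1<|\ut|<P^2$ to get $I(\ut)\ll |\ut|^{(\sigma+1-n)/16}P^\ve+|\ut|P^{-1}$, and then optimises the free parameter by taking $P=|\ut|^{(16+n-\sigma-1)/16}$. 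No new oscillatory-integral machinery is needed: the Birch--Weyl content is imported wholesale from the exponential-sum side.

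Your direct approach -- the continuous analogue of Weyl differencing -- could in principle deliver the same exponent, but the final measure estimate is the crux and you have misidentified its source. Lemma~\ref{T71} controls the dimension of the set of single van der Corput shifts $\h$ for which the \emph{once-differenced} pair $(F^{(0)}_{\h},G^{(0)}_{\h})$ is highly singular; it plays no role here. After two Cauchy--Schwarz steps applied to the cubic $H_{\un{\lambda}}=\lambda_1F+\lambda_2G$, the quantity you must bound is the measure of pairs $(\h_1,\h_2)$ in a fixed box for which the $n$ bilinear Hessian components of $H_{\un{\lambda}}$ evaluated at $(\h_1,\h_2)$ all fall in a window of size $\asymp T^{-1}$; this requires the Birch--Davenport geometry-of-numbers ``shrinking'' lemma keyed to the Birch singular locus of $H_{\un{\lambda}}$, together with Lemma~\ref{P: background on a pair of quadrics: sigma'<= sigma+1} to compare it with $\sigma$. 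Moreover, reducing to a single form $H_{\un{\lambda}}$ and then stratifying over $\un{\lambda}$ is an avoidable detour: the exponential-sum estimate underlying Proposition~\ref{T815} (via \cite[Lemma 3.7]{Lee11}) already performs Weyl differencing on the \emph{system} $(F,G)$ simultaneously, which is precisely what removes the dependence on the direction. In short, your plan amounts to reproving Proposition~\ref{T815} in the archimedean aspect, whereas the paper simply transfers it using the approximation formula and the free parameter $P$.
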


\begin{proof}
We will again follow the same procedure as in \cite{Browning-Heath-Brown09}. $I(\underline{t})\ll 1$ is trivial since $|I(\underline{t})|\leq \meas(\mathcal{S})$ for every $\underline{t}$. For the second estimate, we can assume $|\underline{t}|>1$. Then on taking $\underline{a}=0$, $q=1$ in \eqref{E63} we get
\[S(\underline{\alpha})=P^{n}O(|\underline{\alpha}|P^3)+O((|\underline{\alpha}|P^3+1)P^{n-1}),\]
for any $P\geq 1$. Likewise, for $|\underline{\alpha}|<P^{-1}$, we can also use Proposition \ref{T815} with $\underline{a}=\underline{0}$, $q=1$, to conclude that 
\[S(\underline{\alpha})\ll P^{n+\ve}(|\underline{\alpha}|P^3)^{(\sigma+1-n)/16}.\]
Hence for such $\alpha$, we may set $\underline{t}=\underline{\alpha}P^3$ and combine these estimates to get
\[I(\underline{t})\ll |\underline{t}|^{(\sigma+1-n)/16}P^{\ve}+|\underline{t}|P^{-1},\]
when $1<|\underline{t}|<P^{2}.$ Finally, we note that this is true for every $P\geq 1$ and $I(\underline{t})$ does not depend on $P$ at all. Hence we can choose $P=|\underline{t}|^{(16+n-\sigma-1)/16}$ to reach our second estimate of $I(\underline{t})$. 
\end{proof}
We can now use Lemma \ref{T1240} to conclude that
\begin{align*}
    \mathfrak{J}-\mathfrak{J}(R)=\int_{|\underline{t}|\geq R} I(\underline{t}) d\underline{t}&\ll \int_R^{\infty}\int_R^{\infty} \min\{1,|\underline{t}|^{(\sigma+1-n)/16+\ve}\} d\underline{t}
    \ll R^{(33+\sigma-n)/16+\ve}.
\end{align*}
For $n-\sigma\geq 34$, this shows that $\mathfrak{J}$ is absolutely convergent. Finally, replacing $\mathfrak{J}(P^{\Delta})$ by $\mathfrak{J}$ in \eqref{E65} gives us
\[S_\mathfrak{M}=\mathfrak{S}\mathfrak{J}P^{n-6}+O_\phi(P^{n-7+7\Delta}+P^{n-6-\Delta\phi}+P^{n-6-\Delta/16+\ve}),\]
which is permissible for Lemma \ref{T121} provided that $\Delta\in (0,1/7)$, $\phi>0$, and $\ve>0$ is taken to be sufficiently small.
\subsection{Convergence of the singular series}

Finally we turn to the issue of showing that the singular series
\[\sum_{q=1}^{\infty} q^{-n} \starsum_{\ua} S_{\underline{a},q}\]
 converges absolutely, and obeys the assumption made in Lemma \ref{T121}. In particular, we will show the following:
 \Tb
\label{T: Major Arcs: Singular series converges + asymptotic formula} 
 Assume $n-\sigma\geq 35$. Then $\mathfrak{S}$ is absolutely convergent. Furthermore, there is some $\phi\>0$ such that
 \[\mathfrak{S}(R)=\mathfrak{S} +O_{\phi}(R^{-\phi}).\]
 \Te
To see that $\mathfrak{S}$ converges for $n-\sigma\geq 35$, we will again adopt the approach of Browning and Heath Brown in \cite{Browning-Heath-Brown09}. We start by noting that 
\[\mathfrak{S}=q^{-n} \starsum_{\ua}^{q} S_{\underline{a},q}\]
is a multiplicative function of $q$, and so it follows that $\mathfrak{S}$ is absolutely convergent if and only if $\prod_p (1+\sum_{k=1}^{\infty} a_p(k))$ is, where
\[a_p(k):=p^{-kn} \starsum_{\ua}^{p^k} |S_{\underline{a},p^k}|.\]
But by taking logs, this is equivalent to $\sum_p\sum_{k=1}^{\infty} a_p(k)$ converging. Now by Proposition \ref{T815} with $\underline{a}=\underline{0}$, $q=p^k$, $|\underline{z}|<P^{-3+\Delta}$, $\omega=\chi$, we have that
\begin{equation}
	\label{E66}
a_p(k)\ll p^{k(2+(\sigma+1)/16-n/16)+\ve},
\end{equation}
for any $k\geq 1$, and so this enables us to establish that $\mathfrak{S}$ converges absolutely provided that $n-\sigma\geq 50$. We can use \eqref{E66} far more effectively than this if we are more careful: We will assume that $n-\sigma\geq 35$ from now on. Then by \eqref{E66}, we have
\[\sum_p\sum_{k\geq 16} a_p(k)\ll \sum_{p} p^{33+\sigma-n+\ve}< \sum_{m=1}^{\infty} m^{-2+\ve}\ll 1,\]
assuming $\ve>0$ is sufficiently small. We now need to show that $\sum_p \sum_{1\leq k \leq 15}$ also converges. For $2\leq k \leq 15$, we will use \cite[Lemma 25]{Browning-Heath-Brown09}. This shows that 
\[S_{\underline{a},p^k}\ll_k p^{(k-1)n+s_p(a_1F+a_2G)+1}.\]
Hence
\[\sum_p\sum_{k=2}^{15} a_p(k)\ll \sum_{p}\sum_{k=2}^{15}p^{k(2-n)}p^{(k-1)n+s_p(a_1F+a_2G)+1}=\sum_{p}\sum_{k=2}^{15}p^{2k+1-n+s_p(a_1F+a_2G)}.\]
But by Lemma \ref{P: background on a pair of quadrics: sigma'<= sigma+1}, we have $s_p(a_1F+a_2G)\leq s_p(F,G)+1$. Furthermore since $F$ and $G$ are fixed, $s_p(F,G)=\sigma$ for all but finitely many primes, and so by increasing the size of the implicit multiplicative constant if necessary, we have that
\[\sum_{p}\sum_{k=2}^{15}p^{2k+2-n+\sigma}\ll \sum_p p^{32-n+\sigma}\ll 1,\]
since we have assumed $n-\sigma\geq 35$.

All that is left to check is $k=1$. By Lemma 7 in \cite{Browning-Heath-Brown09}, we have
\[\sum_p a_p(1)\ll \sum_p p^{2-n/2+(s_p(a_1F+a_2G)+1)/2}\ll \sum_p p^{3-n/2+\sigma/2}\ll 1.\]
This enables us to establish Theorem \ref{T: Major Arcs: Singular series converges + asymptotic formula}. Finally, we will follow the approach used in \cite{Marmon_Vishe} to prove that there exists some $\phi>0$ such that
\[\mathfrak{S}(R)=\mathfrak{S}+O_\phi(R^{-\phi}).\]
We will continue to work under the assumption that $n-\sigma\geq 35$. Firstly let
\[S_q:= \starsum_{\ua}^q\sum_{\x}^q e_q(a_1F(\x)+a_2G(\x)).\]
Then, we have
\begin{equation}
\label{E67}
    |\mathfrak{S}-\mathfrak{S}(R)|\leq \sum_{q\geq R}q^{-n} |S_q|.
\end{equation}
We will split $q$ into several of its multiplicative components and bound each component separately. Let

\[b_i:=\prod_{p^i || q} p^i, \quad q_i:=\prod_{\substack{p^e || q\\ e\geq i}} p^e.\]
Then $q=q_k\prod_{i=1}^{k-1} b_i$ for every $k$ (e.g. $q=b_1b_2q_3$). Recall that by Lemma \ref{T95}, we have the following for any $R_1,\cdots,R_k>0$:
\begin{align}
    \label{eq: Major Arcs: Dyadic q_i bound}
    \sum_{\substack{b_1\sim R_1, \cdots, b_{k-1}\sim R_{k-1}\\ q_k\sim R_k }} 1 \ll \prod_{i=1}^k R_i^{1/i}.
\end{align}
We will use $k=16$. Now 
\[|S_q|\leq |S_{q_{16}}|\prod_{i=1}^{15} |S_{b_i}|.\]
We will bound each of these in turn: 
\begin{equation*}
  |S_{q_{16}}|\ll q_{16}^{(15n+\sigma+1)/16+\ve}  
\end{equation*}
by Proposition \ref{T815}. For $b_3,\cdots, b_{15}$, we split $b_k$ into prime powers and use Lemma 25 from \cite{Browning-Heath-Brown09}:
\begin{equation*}
  |S_{p^k}|\ll \starsum_{\ua}^{p^k} p^{(k-1)n+s_p(a_1F+a_2G)+1}\ll p^{(k-1)n+\sigma+2+2k}
\end{equation*}
for $p\gg 1$. Hence for $k\in\{3,\cdots,15\}$,
\begin{equation*}
  |S_{b_k}|\ll b_k^{2+((k-1)n+\sigma+2)/k}.
\end{equation*}
Finally for $b_1,b_2$, we use Lemma 7 from \cite{Browning-Heath-Brown09}. By following the same argument as for $S_{b_3},\cdots, S_{b_15}$, we get
\begin{equation*}
  |S_{b_k}|\ll b_k^{2+(n+\sigma+2)/2},
\end{equation*}
for $k\in\{1,2\}$. Hence
\begin{equation*}
  |S_{q}|\ll q^{2+\ve}(b_1b_2)^{(n+\sigma+2)/2}b_3^{(2n+\sigma+2)/3}\cdots b_{15}^{(14n+\sigma+2)/15},
\end{equation*}
or equivalently
\begin{equation*}
  |S_{q}|\ll \frac{q^{2+n+\ve}}{(b_1b_2)^{(m-1)/2}b_3^{(m-1)/3}\cdots b_{15}^{(m-1)/15}q_{16}^{m/16}},
\end{equation*}
where $m=n-\sigma-1$. Therefore, by \eqref{E67}, we have

\begin{align*}
  |\mathfrak{S}-\mathfrak{S}(R)|&\ll \sum_{b_1\cdots b_{15}q_{16}\geq R} (b_1b_2)^{2+\ve-(m-1)/2}b_3^{2+\ve-(m-1)/3}\cdots b_{15}^{2+\ve-(m-1)/15}q_{16}^{2+\ve-m/16}\\
                                &\ll \sum_{b_1\cdots b_{15}q_{16}\geq R} (b_1b_2)^{(5+\ve-m)/2}b_3^{(7+\ve-m)/3}\cdots b_{15}^{(31+\ve-m)/15}q_{16}^{(32+\ve-m)/16}.\\
\end{align*}
When $m\geq 34$, we clearly have 
\begin{align*}
  |\mathfrak{S}-\mathfrak{S}(R)|&\ll \sum_{b_1\cdots b_{15}q_{16}\geq R} (b_1b_2)^{-29/2+\ve}b_3^{-27/3+\ve}\cdots b_{15}^{-3/15+\ve}q_{16}^{-2/16+\ve}\\
                                &\ll R^{-1/16+2\ve} \sum_{b_1\cdots b_{15}q_{16}\geq R} (b_1b_2)^{-1-\ve}b_3^{-1/3-\ve}\cdots b_{15}^{-1/15-\ve}q_{16}^{-1/16-\ve}\\
                                & < R^{-1/16+2\ve} \sum_{b_1,\cdots, b_{15},q_{16}= 1}^{\infty} (b_1b_2)^{-1-\ve}b_3^{-1/3-\ve}\cdots b_{15}^{-1/15-\ve}q_{16}^{-1/16-\ve},
\end{align*}
and this sum converges by \eqref{eq: Major Arcs: Dyadic q_i bound}. Hence, we conclude that
\[\mathfrak{S}=\mathfrak{S}(R)+O(R^{-\phi}),\]
where $\phi=1/16-\ve$, provided that $n-\sigma\geq 35$.
\section*{Appendix A: Mathematica Code}
Here, we will include the Mathematica code that verifies our minor arcs bound. An executable version of this can be found at \cite{Northey1}.

\includegraphics[scale=0.6855]{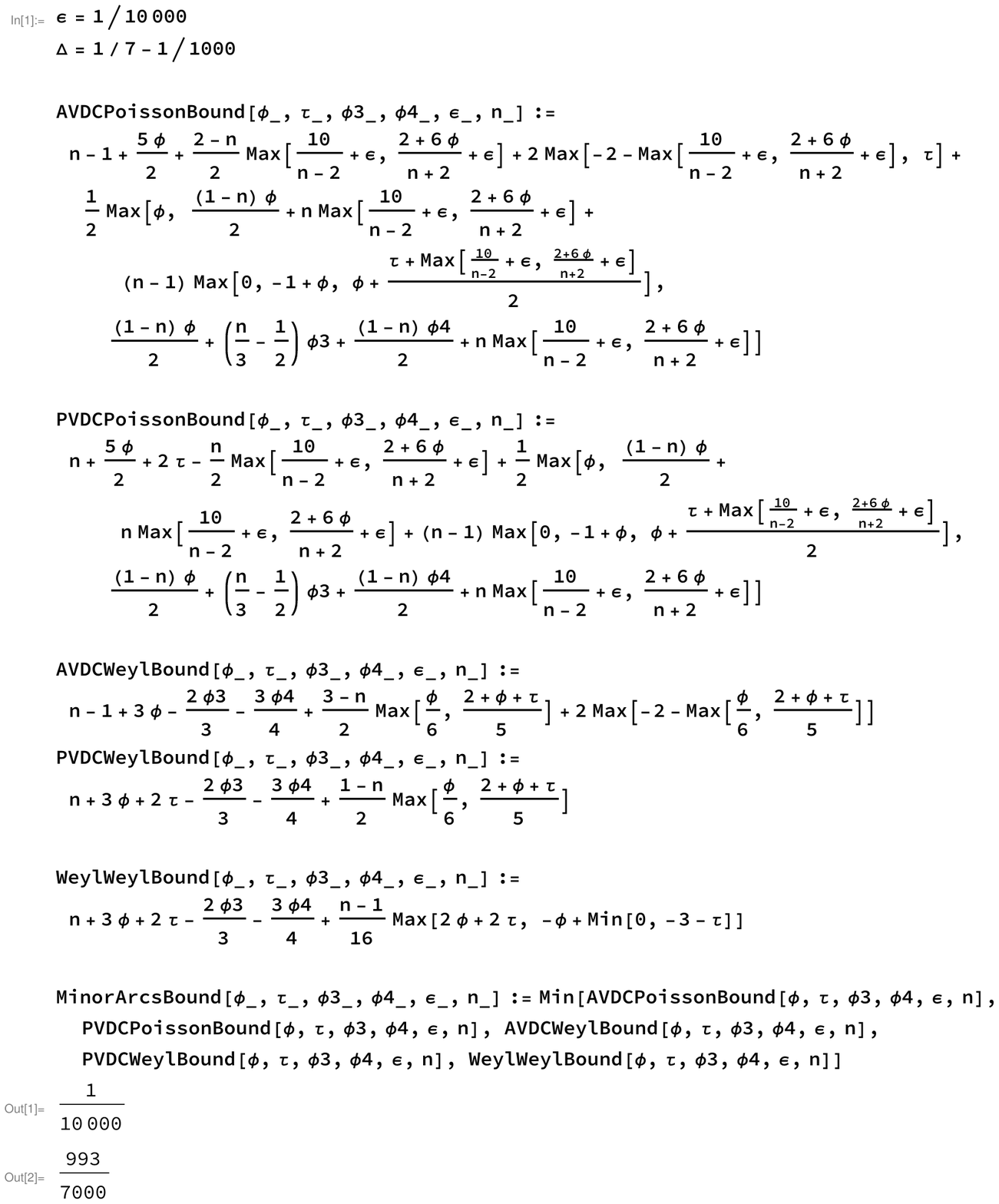}
\newpage
\includegraphics[scale=0.7]{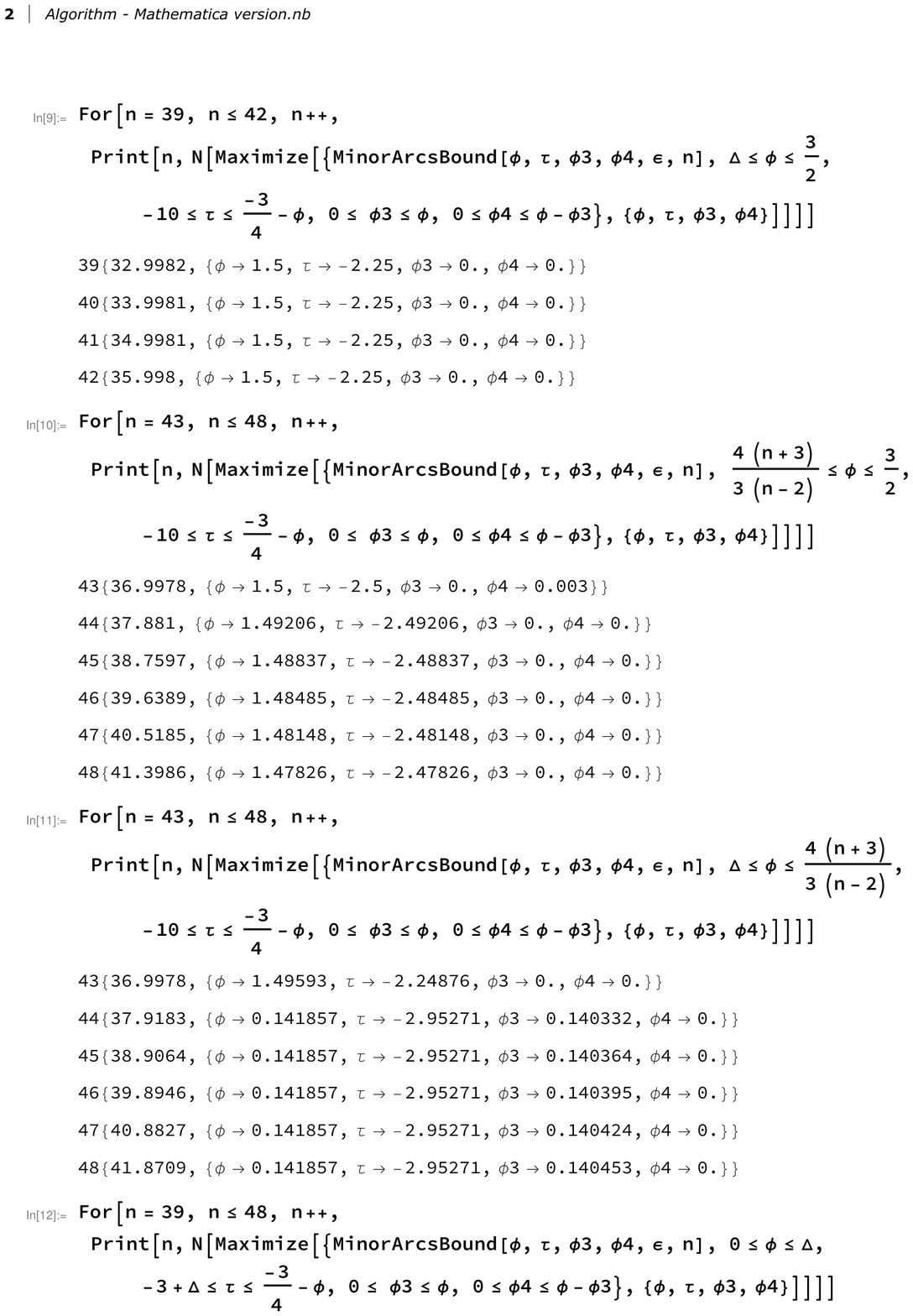}
\newpage
\includegraphics[scale=0.7]{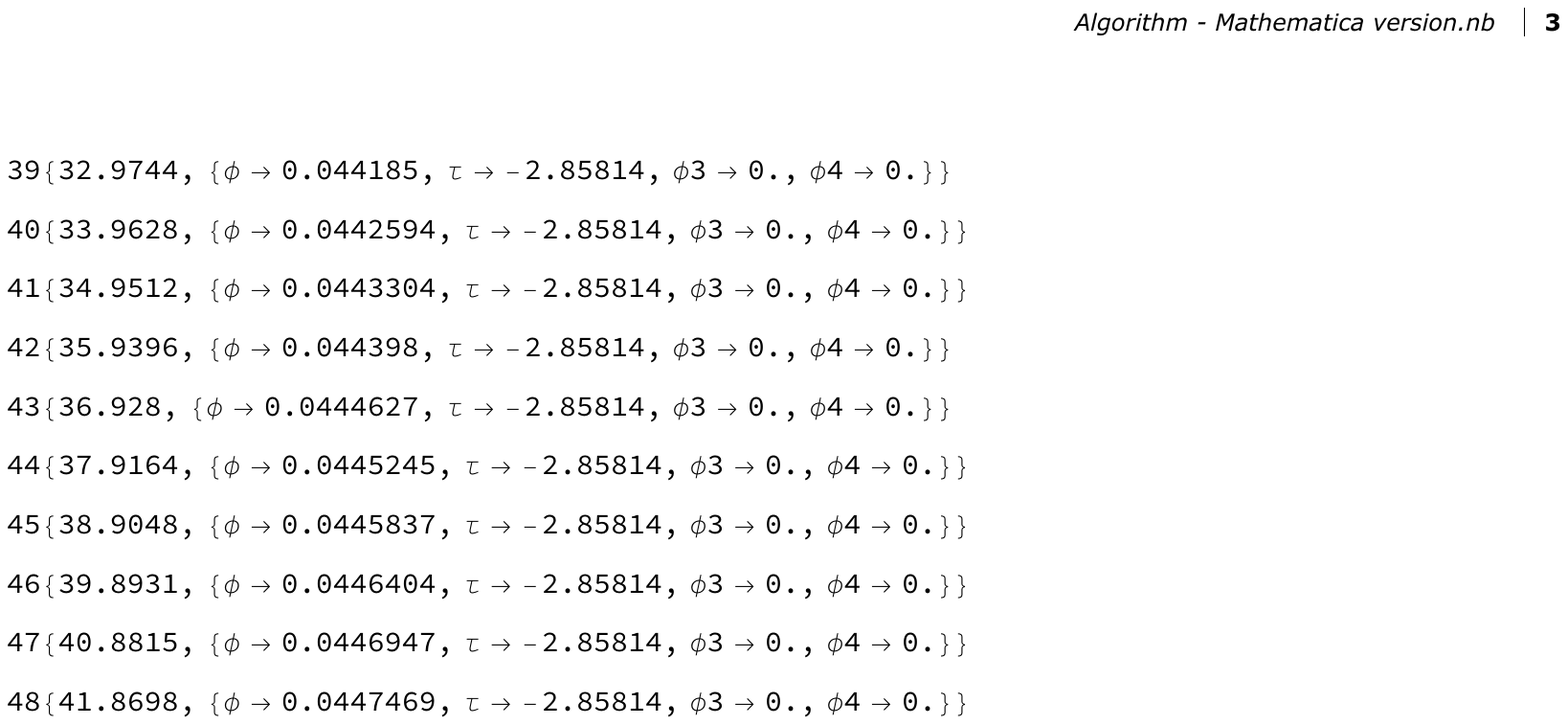}
\newpage

\bibliographystyle{plain}

\end{document}